\documentclass[reqno]{amsart}
\usepackage{amsmath}
\usepackage{amssymb}

\usepackage{enumitem,kantlipsum}
\usepackage[margin=1in]{geometry}
\usepackage[all,cmtip]{xy}
\usepackage{color,graphicx}
\usepackage{amsthm}
\usepackage{comment}
\usepackage{mathrsfs}
\usepackage[toc,page]{appendix}
\usepackage[utf8]{inputenc}
\usepackage{tikz, subfigure}
\usepackage[mathscr]{euscript}

\usepackage[bookmarks=true, bookmarksopen=true,%
bookmarksdepth=3,bookmarksopenlevel=2,%
colorlinks=true,%
linkcolor=blue,%
citecolor=blue,%
filecolor=blue,%
menucolor=blue,%
urlcolor=blue]{hyperref}
\usepackage{cleveref}


\def\AA{\mathbb{A}}

\def\CC{\mathbb{C}}

\def\HH{\mathbb{H}}

\def\QQ{\mathbb{Q}}
\def\RR{\mathbb{R}}

\def\SS{\mathbb{S}}

\def\WW{\mathbb{W}}

\def\ZZ{\mathbb{Z}}


\def\calF{\mathcal{F}}

\def\calL{\mathcal{L}}

\def\calN{\mathcal{N}}

\def\calS{\mathcal{S}}



\def\bc{\mathbf{c}}


\newcommand\frD{\mathfrak{D}}

\newcommand\frH{\mathfrak{H}}

\newcommand\frS{\mathfrak{S}}

\newcommand\frg{\mathfrak{g}}

\newcommand\frl{\mathfrak{l}}

\newcommand\frs{\mathfrak{s}}
\newcommand\frt{\mathfrak{t}}

\newcommand\frz{\mathfrak{z}}






\newcommand{\Locsys}{\textup{Locsys}}

\newcommand{\Bun}{\textup{Bun}}

\newcommand{\Coh}{\textup{Coh}}

\newcommand{\Ind}{\textup{Ind}}

\newcommand\Loc{\textup{Loc}}

\newcommand{\QCoh}{\textup{QCoh}}

\newcommand\rank{\textup{rank}}

\newcommand{\Res}{\textup{Res}}

\newcommand\Tot{\textup{Tot}}

\newcommand{\Vect}{\textup{Vect}}

\newcommand{\MHC}{\textup{MHC}}

\newcommand\Hom{\textup{Hom}}
\newcommand\End{\textup{End}}
\newcommand\Map{\textup{Map}}




\newcommand\nc{\newcommand}
\nc\on{\operatorname}
\nc\ol{\overline}
\nc\ul{\underline}
\nc\us[1]{\underline{\smash{#1}}}

\nc\oo{\infty}

\nc\Cone{\mathit{Cone}}
\nc\ssupp{\mathit{ss}}
\nc\risom{\stackrel{\sim}{\to}}
\nc\Sh{\textup{Sh}}
\nc\un{\diamondsuit}
\nc\orient{\mathit{or}}
\nc\sing{\mathit{sing}}
\nc\MF{\on{MF}}
\nc\inthom{\mathit{Hom}}

\newcommand{\Ch}{\textup{Ch}}

\newcommand{\N}{\mathcal{N}}
\newcommand{\g}{\mathfrak{g}}

\newcommand{\colim}{\textup{colim}}

\newtheorem{thm}[equation]{Theorem}
\newtheorem{prop}[equation]{Proposition}
\newtheorem{lem}[equation]{Lemma}
\newtheorem{cor}[equation]{Corollary}

\theoremstyle{definition}
\newtheorem{defn}[equation]{Definition}

\newtheorem{defnthm}[equation]{Definition-Theorem}
\newtheorem{eg}[equation]{Example}

\newtheorem{rmk}[equation]{Remark}
\newtheorem{conj}[equation]{Conjecture}

\numberwithin{equation}{section}

\begin{document}

\nc{\Conn}{\mathrm{Conn}}

\nc{\fg}{\mathfrak g}
\nc{\fh}{\mathfrak h}

\nc{\cN}{\mathcal N}


\title[Derived categories of character sheaves]{Derived categories of character sheaves}

\author{Penghui Li}

\address{Institute of Science and Technology Austria}
\email{pli@ist.ac.at}

\begin{abstract}
We give a block decomposition of the dg category of character sheaves on a simple and simply-connected complex reductive group $G$, similar to the one in generalized Springer correspondence. As a corollary, we identify the category of character sheaves on $G$ as the category of quasi-coherent sheaves on an explicitly defined derived stack $\widehat{G}$.
\end{abstract}

\maketitle


\section{Introduction}

\subsection{Springer correspondence}
	Let $G$ be a connected complex reductive group, $\mathcal{N}_G$ be the unipotent variety in $G$. The category of equivariant sheaves on $\mathcal{N}_G$ has been the subject of many studies in geometric representation theory. The project is initiated by Springer \cite{Spr1,Spr2}, who constructed Springer resolution of $\cN_G$, which he used to give a geometric construction of the  Weyl group representations from its action on the cohomology of Springer fibers. Then Lusztig \cite{Lu7} extended the method and gave a description of all perverse sheaves on $\mathcal{N}_G$ in terms representations of various relative Weyl groups. And recently, Rider-Russell \cite{RR2} extended the results to derived setting. Now we have: 
\begin{thm}[Generalized Springer Correspondence \cite{Lu7,RR2}]
	\label{genespringer}
		Let $\Sh^{\heartsuit}_c(\cN_G/G)$ be the abelian category of $G$-equivariant perverse sheaves on $\cN_G$ and $D^b_c(\cN_G/G)$ be the bounded derived category of $G$-equivariant constructible sheaves on $\cN_G$.We have equivalences of abelian/triangulated categories:
		$$  \Sh^{\heartsuit}_c(\cN_G/G) \simeq \bigoplus_{\textbf{c} \in  \{ \textup{cuspidal data}\}/ \sim}  \CC[W^{L_\textbf{c}}]  \textup{ -mod}^{\heartsuit}_\textup{fd},  $$
		
		$$  D^b_c(\cN_G/G) \simeq \bigoplus_{\textbf{c} \in  \{ \textup{cuspidal data}\}/ \sim} D^b( \CC[W^{L_\textbf{c}}] \# \SS_{\frz^*_{L_\textbf{c}}[-2]} ),  $$
	where we denote	\begin{itemize}
			\item $L_\textbf{c}$ the Levi subgroup in the cuspidal data $\textbf{c}$,
			\item $W^{L_\textbf{c}}:= N_W(W_{L_\textbf{c}})/W_{L_\textbf{c}}$ the relative Weyl group of $(G,L_\textbf{c})$,
			\item $\frz_{L_\textbf{c}} :=$ the Lie algebra of  the center $Z_{L_\textbf{c}}$ of $L_\textbf{c}$,
			\item $\SS_V$:= the graded symmetric algebra on a (complex of) vector space $V$,
			\item  $\CC[\Gamma] \# A$ the smash product of $A$ with $\Gamma$, 
			\item  $A\textup{ -mod}^{\heartsuit}_\textup{fd}$ the abelian category of finite dimensional $A$-modules.
			
			\item $D^b(A)$ the bounded derived category of $A$-modules.
		
		\end{itemize}
\end{thm}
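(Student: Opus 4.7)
The plan is to reduce the theorem to two logically separable tasks: first, invoke (or reprove) Lusztig's block decomposition at the abelian level; second, upgrade it to the derived level via a formality argument for the $\Ext$-algebra of a projective generator of each block. Throughout I would work with the parabolic induction functor $\Ind_P^G : D^b_c(\cN_L/L) \to D^b_c(\cN_G/G)$ coming from the correspondence $\cN_L \leftarrow \cN_P \to \cN_G$ for a parabolic $P$ with Levi quotient $L$, and with cuspidal data $\textbf{c} = (L, \mathcal{O}, \mathcal{E})$ consisting of a Levi subgroup, a nilpotent $L$-orbit, and a cuspidal irreducible $L$-equivariant local system on it.

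For the abelian statement, Lusztig's central input is that every simple $G$-equivariant perverse sheaf on $\cN_G$ appears as a summand of $\Ind_P^G(\IC(\mathcal{O},\mathcal{E}))$ for a unique $G$-conjugacy class of cuspidal data $\textbf{c}$. Together with an adjunction/orthogonality statement---$\Hom$'s between sheaves in different blocks vanish because cuspidal sheaves are annihilated by parabolic restriction to any proper Levi of their support---this partitions $\Sh^\heartsuit_c(\cN_G/G)$ into blocks indexed by cuspidal data. A Fourier-transform / convolution calculation then identifies $\End(\Ind_P^G(\IC(\mathcal{O},\mathcal{E})))$ with $\CC[W^{L_\textbf{c}}]$, and a standard projective-generator plus Morita argument yields the abelian equivalence.

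For the derived statement, the same adjunction argument now gives vanishing of $\Ext^i$ between objects in different blocks, so the block decomposition persists on the derived level. I would next compute the graded $\Ext$-algebra of the generator: the $Z_{L_\textbf{c}}$-equivariance built into the induction contributes a polynomial factor $H^*_{Z_{L_\textbf{c}}}(\textup{pt}) = \SS_{\frz^*_{L_\textbf{c}}[-2]}$, which combines with the $W^{L_\textbf{c}}$-symmetry to yield the smash product $\CC[W^{L_\textbf{c}}] \# \SS_{\frz^*_{L_\textbf{c}}[-2]}$ as the cohomology of the endomorphism dg algebra. The main obstacle---and the place where the derived case is genuinely harder than the abelian one---is to prove that this dg algebra is formal. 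I would attack this via a weight/purity argument in the framework of mixed Hodge modules: the cuspidal IC sheaves are pure, parabolic induction preserves purity, and the resulting weight grading on the $\Ext$-algebra matches its cohomological grading tightly enough that all higher $A_\infty$-Massey products are forced to vanish. Once formality is in hand, a standard tilting argument---using $\Ind_P^G(\IC(\mathcal{O},\mathcal{E}))$ as a compact generator of the block---realizes the block as $D^b(\CC[W^{L_\textbf{c}}] \# \SS_{\frz^*_{L_\textbf{c}}[-2]})$, completing the proof.
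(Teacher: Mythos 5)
Your proposal follows essentially the same route the paper takes: a cuspidal block decomposition with orthogonality from the induction/restriction adjunction, identification of the graded $\Ext$-algebra of the Springer generator with $\CC[W^{L_{\mathbf c}}]\#\SS_{\frz^*_{L_{\mathbf c}}[-2]}$, and a purity-implies-formality argument to lift this to a derived equivalence. (The paper actually cites this statement to \cite{Lu7,RR2} and proves its dg upgrade, Theorem~\ref{generalizedspringer}; its formality step is carried out in the appendix via the Cirici--Hovey theorem on mixed Hodge complexes, which is the precise version of the "purity kills Massey products" argument you sketch.)
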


Choose $T \subset B \subset G$  a maximal torus and a Borel subgroup. Let $\Delta=\Delta(G,T,B)$ be the set of simple root. For $I \subset \Delta$, let $L_I$ be the subgroup of $G$ generated by $T, U_{\alpha}$ for $\alpha \in I $ or $-\alpha \in I$, where $U_\alpha$ is the one parameter subgroup of $\alpha$. 
For $L \subset K$ two Levi groups of $G$, put $W^L_K:=N_{W_K}(W_L)/W_L$, also set $W^L:=W^L_G$ and $W_K:=W^T_K$, this agree with the notation in above Theorem. For $I \subset I'$, set $W^I_{I'}:= W^{L_I}_{L_{I'}}$, $\frz_I:= Lie(Z_{L_I})$. Denote $C_G$ be the (finite) set of isomorphic class of cuspidal sheaves on $\mathcal{N}/G$, and $c_G:=|C_G|$. These numbers are explicitly calculated in \cite{Lu7}. Put $C_I = C_{L_I}$ , $c_I=|C_I|$. Also denote $\Sh(\N_G/G)$ the dg category of all $G$-equivariant sheaves on $\N_G$, and $\Sh_c(\N_G/G) \subset \Sh(\N_G/G)$ the dg category of constructible sheaves.  We prove the following upgrade of Theorem~\ref{genespringer}:
\begin{thm}
	\label{generalizedspringer}
	There are equivalence of abelian/dg categories:
	$$  \Sh^{\heartsuit}_c(\cN_G/G) \simeq \bigoplus_{I \subset \Delta}  (\CC[W^I] \textup{ -mod}^{\heartsuit}_\textup{fd})^{\oplus c_I}  $$
	$$  \Sh_c(\cN_G/G) \simeq \bigoplus_{I \subset \Delta}  (\CC[W^I] \# \SS_{\frz_I[1]} \textup{ -mod}_\textup{fd})^{\oplus c_I}  $$
	$$  \Sh(\cN_G/G) \simeq \bigoplus_{I \subset \Delta} (\CC[W^I] \# \SS_{\frz_I[1]}  \textup{ -mod} )^{\oplus c_I} $$
\end{thm}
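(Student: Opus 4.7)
The plan is to bootstrap from the generalized Springer correspondence (Theorem \ref{genespringer}) in two stages: a combinatorial re-indexing of the block decomposition, and a dg/ind-completion upgrade. For the first (abelian) line of Theorem \ref{generalizedspringer}, I would construct, for each $I \subset \Delta$ and each cuspidal sheaf $\mathcal{E} \in C_I$ on $L_I$, a fully faithful embedding $\CC[W^I]\textup{-mod}^{\heartsuit}_{\textup{fd}} \hookrightarrow \Sh^{\heartsuit}_c(\cN_G/G)$ sending $V \mapsto \Ind_{L_I}^G(\mathcal{E}) \otimes_{\CC[W^I]} V$. Lusztig's cleanness ensures that $W^I$ acts trivially on each $\mathcal{E}$, giving $\End(\Ind_{L_I}^G(\mathcal{E})) \cong \CC[W^I]$. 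The images for different pairs $(I, \mathcal{E})$ are mutually orthogonal and jointly generate $\Sh^{\heartsuit}_c$ by Theorem \ref{genespringer}, yielding the block decomposition indexed by $(I, \mathcal{E})$ with $I \subset \Delta$ and $\mathcal{E} \in C_I$, which is precisely the form on the right-hand side.

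For the second line, I would upgrade to a dg equivalence by computing $\Ext^*(\Ind_{L_I}^G(\mathcal{E}), \Ind_{L_I}^G(\mathcal{E}))$ as a dg algebra on each block. The degree-zero part recovers $\CC[W^I]$, while the higher Ext groups produce exterior generators coming from the $\frz_I$-factor of $L_I$-equivariant cohomology. The appearance of the exterior algebra $\SS_{\frz_I[1]}$ (rather than the polynomial $\SS_{\frz^*_{L_\textbf{c}}[-2]}$ of Theorem \ref{genespringer}) reflects the Koszul duality between symmetric and exterior algebras on $\frz_I$, with the $W^I$-action carried along. Identifying each block with modules over this endomorphism dg algebra gives the constructible equivalence. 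For the third line, I pass to ind-completion: $\Sh(\cN_G/G)$ is the ind-completion of $\Sh_c(\cN_G/G)$, and ind-completion commutes with direct sums, so the decomposition of the second line promotes to the full (non-finite-dimensional) module categories.

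The main obstacle is the Ext computation in the second step, specifically establishing that the dg endomorphism algebra of the parabolic induction of a cuspidal sheaf has the precise form $\CC[W^I] \# \SS_{\frz_I[1]}$ with the correct cohomological degrees. This requires careful tracking of the $L_I$-equivariant cohomology contribution, a compatible Koszul duality statement for the polynomial-to-exterior transition, and verification that the $W^I$-equivariance is preserved throughout. A secondary concern is the orthogonality and spanning of the blocks indexed by $(I, \mathcal{E})$ with $\mathcal{E}$ ranging over $C_I$ for every $I$ (not just equivalence-class representatives of cuspidal data), which must reconcile with the block decomposition of Theorem \ref{genespringer} without over-counting; this amounts to a compatibility check between parabolic induction from different standard Levis in the same $G$-conjugacy class and the \emph{a priori} coarser indexing by cuspidal data modulo $G$-conjugation.
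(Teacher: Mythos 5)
Your proposal correctly identifies the large-scale architecture — start from Theorem~\ref{genespringer}, re-index by $I\subset\Delta$, compute endomorphism algebras of parabolically induced cuspidals, apply Koszul duality — but it has two genuine gaps, one in the second line and one in the third.

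\textbf{Formality.} For the constructible equivalence you write ``Identifying each block with modules over this endomorphism dg algebra gives the constructible equivalence,'' but Karoubi generation only yields $\Sh_c(\cN_G/G) \simeq \End(Spr)\text{-perf}$ with $\End(Spr)$ an a priori unidentified dg algebra. Computing the Ext groups, i.e.\ $H^*(\End(Spr_{I,F})) \simeq \CC[W^I]\#\SS_{\frz_I^*[-2]}$ (which is a polynomial algebra in even degree, not exterior — the exterior algebra $\SS_{\frz_I[1]}$ only appears \emph{after} Koszul duality, so your phrase ``the higher Ext groups produce exterior generators'' conflates the two sides), does not by itself determine the dg algebra. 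The missing step is \emph{formality} of $\End(Spr_{I,F})$: the paper devotes an appendix (Proposition~\ref{formalspringer}, via the Cirici--Hovey ``purity implies formality'' theorem) to showing that the endomorphism dg algebra of a generalized Springer sheaf is quasi-isomorphic to its cohomology, by endowing it with a pure mixed Hodge complex structure. Without this, there is no passage from $\End(Spr_{I,F})\text{-perf}$ to $H^*(\End(Spr_{I,F}))\text{-perf}$, and the second equivalence does not follow.

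\textbf{Ind-completion fails for the third line.} You propose to obtain $\Sh(\cN_G/G)$ as the ind-completion of $\Sh_c(\cN_G/G)$ and commute $\mathrm{Ind}$ with the direct-sum decomposition. But $\Sh_c(\cN_G/G) \simeq \bigoplus (\CC[W^I]\#\SS_{\frz_I[1]})\text{-mod}_{\text{fd}}$ consists of finite-dimensional modules over (smash products with) an exterior algebra, and a finite-dimensional module over $\SS_{\frz_I[1]}$ is in general \emph{not} perfect. Hence $\Sh_c(\cN_G/G)$ is strictly larger than the category $\Sh(\cN_G/G)^c$ of compact objects, and $\mathrm{Ind}$ of it would not give $\bigoplus (\CC[W^I]\#\SS_{\frz_I[1]})\text{-mod}$ (in $\mathrm{Ind}(A\text{-mod}_{\text{fd}})$ the non-perfect finite-dimensional modules would be compact, whereas they are not compact in $A\text{-mod}$). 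The paper handles this by first proving compact generation of $\Sh(\cN_G/G)$ and the containment $\Sh(\cN_G/G)^c\subset\Sh_c(\cN_G/G)$ (Corollary~\ref{finite orbit stack}), then separately identifying $\Sh^{\mathrm{cusp}}(\cN_{L_I}/L_I)\simeq\SS_{\frz_I[1]}\text{-mod}^{\oplus c_I}$ via descent, and finally exhibiting an explicit compact generator $\bigoplus_I\Ind^{\Delta}_I(E_I)$ with $E_I$ the free-module object in the cuspidal block; one then applies Proposition~\ref{compactgenerate} to its endomorphism algebra. Your ind-completion shortcut skips all of this and lands in the wrong category.

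Finally, on the first line: you correctly flag the potential over-counting when re-indexing by $(I,\mathcal{E})$ with $I\subset\Delta$ versus the quotient $\{\text{cuspidal data}\}/\sim$, but you leave it as a ``secondary concern.'' This is a real point that needs an argument; the paper proves a lemma showing that any two parabolic subgroups of $G$ with the same Levi $L$ carrying a cuspidal sheaf are $G$-conjugate (using that $N_W(W_L)/W_L$ is a reflection group acting transitively on the chambers of the relevant hyperplane arrangement by \cite[Theorem 9.2]{Lu7}), from which injectivity of the re-indexing follows.
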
	

This improves the previous theorem in three aspects: (1) we pick an explicit set of representitives in the quotient $  \{ \textup{cuspidal data}\}/ \sim$. (2) we promote the statement of triangulated categories to dg categories,  the proof is an application of the general ``purity implies formality" result of Cirici-Hovey \cite{CH17}. This upgrade to dg category is necessary for our main theorem since triangulated category does not behave well under (co)limits. (3) we extends to statement to all sheaves via analysing compact objects.

\subsection{Character sheaves} Now we turn to the main object we study in this paper. Let $N \subset B$ unipotent radical, recall the horocycle correspondence: 
    $$\xymatrixrowsep{0pc}\xymatrix { Y:=(G/N \times  G/N )/T     &  G \times G/B \ar[l]^-{q} \ar[r]_-{p}  &       G    \\
    (g\tilde{x},   \tilde{x})     & \quad  (g,x)  \ar@{|->}[r] \ar@{|->}[l]  \quad &     g          }$$
    where $\tilde{x}$ is a lift of $x$ in $G/N$. We have the Radon transform 
   $R:=q_*p^!: \Sh(G/G) \rightarrow \Sh(Y/G)$ and its left adjoint $\check{R}:=p_!q^*: \Sh(Y/G) \rightarrow \Sh(G/G),$ between the corresponding $G$-equivariant dg-category of sheaves, where $G$ acts on $Y$ from the left and on $G$ by conjugation. Note that $T$ acts freely on $Y$ by acting on the first factor $G/N$ from the right.
\begin{defnthm}[\cite{MV,Gi1}]
	Let $\Ch(G)$ be the fully subcategory of $\Sh(G/G)$ consists of sheaf $F$ satisfying one of the following equivalent conditions:
	\begin{enumerate}
	    \item $F$ is a direct summand of $\check{R}(E)$, for some $E \in \Sh(Y/G)$ which is locally constant on $T$ orbits,
	    
	    \item $R(F)$ is locally constant on the $T$ orbits, 
	    
	    \item the singular support of $F$ is contained in $G \times \cN \subset G \times \fg \simeq T^*G$.

	\end{enumerate}	
Let $\Ch_c (G) \subset \Ch(G)$ be the full subcategory consists of constructible sheaves. And $\Ch^{\heartsuit}_c(G) \subset \Ch_c(G)$ be the abelian subcategory consists of perverse sheaves.
\end{defnthm}	
Assume that $G$ is simple and simply-connected. Denote by $\widetilde{\Delta}=\Delta \cup \{\alpha_0\}$ the set of affine simple roots. For $I \subsetneq \widetilde{\Delta}$, we extend the definition of $L_I, \frz_I$ by the same formula as in Theorem~\ref{generalizedspringer}, with the convention that $U_{\pm \alpha_0} :=U_{\mp \alpha_h}$, for $\alpha_h$ the highest root. Denote $\widetilde{W}$ the affine Weyl group, $\widetilde{W}_I$ the subgroup generated by reflections in $I$, and $\widetilde{W}^I := N_{\widetilde{W}}\widetilde{W}_I/\widetilde{W}_I$. $\widetilde{W}^I$ acts on the affine subspace $\cap_{\alpha \in I} \alpha^\perp \subset X_*(T) \otimes \RR$ (where $\alpha^\perp$ is the plane fixed by the simple reflection corresponding to $\alpha$), this induces linear action on $\frz_I$. We can state the main theorem of this paper:
\begin{thm}
\label{main}
There are equivalences between abelian/dg-categories:
$$\Ch^{\heartsuit}_c(G) \simeq \bigoplus_{I \subsetneq \widetilde{\Delta}}  (\CC[\widetilde{W}^I] \textup{ -mod}^{\heartsuit}_\textup{fd})^{\oplus c_I} $$
$$\Ch_c(G) \simeq \bigoplus_{I \subsetneq \widetilde{\Delta}}  (\CC[\widetilde{W}^I] \#  \SS_{\frz_I[1]} \textup{ -mod}_\textup{fd})^{\oplus c_I} $$
$$\Ch(G) \simeq \bigoplus_{I \subsetneq \widetilde{\Delta}} ( \CC[\widetilde{W}^I] \#   \SS_{\frz_I[1]} \textup{ -mod}) ^{\oplus c_I}$$	

\end{thm}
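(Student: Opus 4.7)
The plan is to mimic the proof strategy of Theorem~\ref{generalizedspringer} at the affine level, using the horocycle correspondence $(R,\check R)$ as the bridge between character sheaves and a Springer-type picture. Concretely, $\check R$ identifies $\Ch(G)$ with a full subcategory of the $T$-monodromic dg category $\Sh(Y/G)^{T\textup{-mon}}$; the $X^*(T)$-monodromy arising from the $T$-quotient $Y=(G/N\times G/N)/T$ combines with the geometric $W$-action to produce an action of the affine Weyl group $\widetilde W = W\ltimes X_*(T)$. This monodromic enrichment is the reason one sees $\widetilde W^I$ (rather than the finite $W^I$) in the target, and the reason that blocks are indexed by proper subsets of $\widetilde\Delta$ rather than of $\Delta$.

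For each $I\subsetneq\widetilde\Delta$ I would construct generators of the corresponding block of $\Ch(G)$ by (affine) parabolic induction: start from a cuspidal sheaf on $\mathcal{N}_{L_I}/L_I$ (classified by finite Springer theory, yielding $c_I$ isomorphism classes), twist by a regular monodromic local system on the center $Z^0_{L_I}$, and induce up to $G$ along the parabolic $P_I$. The key assertions are then (i) the union of such induced objects generates $\Ch(G)$, with different $I$'s giving orthogonal summands, and (ii) the endomorphism dg algebra of the generator associated to $I$ is formal with cohomology $\CC[\widetilde W^I]\#\SS_{\frz_I[1]}$. As in Theorem~\ref{generalizedspringer}, (ii) reduces to a Springer-style equivariant cohomology calculation together with Cirici--Hovey's purity-implies-formality result. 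The abelian-level statement for $\Ch^\heartsuit_c(G)$ then follows by taking perverse hearts, and the extension from $\Ch_c(G)$ to $\Ch(G)$ is achieved by identifying compact objects and passing to ind-completions, exactly as in the third part of Theorem~\ref{generalizedspringer}.

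The hardest step is (ii): producing the affine relative Weyl group $\widetilde W^I$, including its translation lattice, as symmetries of the induced generator, and simultaneously justifying the restriction to \emph{proper} subsets of $\widetilde\Delta$. The translations in $\widetilde W$ are precisely what is contributed by the $X^*(T)$-monodromy on $\Sh(Y/G)^{T\textup{-mon}}$, so a substantial part of the argument will be careful bookkeeping of how this monodromic action interacts with parabolic induction, upgrading the finite relative Weyl group $W^{L_I}$ to its affine counterpart $\widetilde W^I$. The exclusion $I=\widetilde\Delta$ is forced geometrically: in that case $\bigcap_{\alpha\in\widetilde\Delta}\alpha^\perp\subset X_*(T)\otimes\RR$ is empty (since $\Delta$ already cuts down to the origin and $\alpha_0^\perp$ is an affine hyperplane off the origin), so the would-be Levi degenerates and contributes no cuspidal block. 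A secondary technical point is to arrange the Cirici--Hovey formality argument \emph{uniformly} in $I$, so that the purity input can be verified simultaneously across all blocks and compatibly with the compact-generation step needed for the passage to $\Ch(G)$.
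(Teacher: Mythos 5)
Your proposal takes a genuinely different route from the paper's, and the crucial steps remain unjustified. The paper does \emph{not} compute the endomorphism dg algebra of a generator of $\Ch(G)$ directly. Its essential input is the gluing Theorem~\ref{glue} (from \cite{LN}), which you do not invoke: it expresses $\Ch(G)$ as $\lim_{\{I\subsetneq\widetilde\Delta\}^{op}}\Sh(\cN_{L_I}/L_I)$ along parabolic restrictions. The proof then applies Theorem~\ref{generalizedspringer} block-by-block to each $\Sh(\cN_{L_J}/L_J)$, uses Proposition~\ref{resind} to identify the restriction functors as restriction along $W^{I'}_J\subset W^{I'}_{J'}$ so that the limit over $J$ can be exchanged with the direct sum over blocks $I$, and finally identifies the remaining limit $\lim_{I\subset J\subsetneq\widetilde\Delta}\CC[W^I_J]\#\SS_{\frz_I[1]}\textup{-mod}$ with $\CC[\widetilde W^I]\#\SS_{\frz_I[1]}\textup{-mod}$ by Lemma~\ref{weylgroup} together with the reflection-group homotopy colimit statement (Corollary~\ref{homotopypushout}, fed through the argument of Theorem~\ref{derivedspringer}). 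In particular, the affine group $\widetilde W^I$ --- and especially its translation lattice --- arises here from this limit/colimit bookkeeping, not from any single endomorphism computation.

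Your plan instead proposes to produce compact generators of $\Ch(G)$ via the horocycle correspondence, show orthogonality, and prove that each generator has a formal endomorphism dg algebra with cohomology $\CC[\widetilde W^I]\#\SS_{\frz_I[1]}$. This is where the gaps lie. First, $\check R$ is not fully faithful, and $R$ does not identify $\Ch(G)$ with a full subcategory of $T$-monodromic sheaves on $Y/G$, so the starting identification needs real work. Second, the translations in $\widetilde W^I$ do not visibly appear in the endomorphism algebra of a compact object on $G/G$; to manufacture them from $X^*(T)$-monodromy would require a substantial theory of monodromic Hecke categories that the paper deliberately bypasses by gluing. Third, the only purity/formality input available in the paper is Proposition~\ref{formalspringer} (via \cite{RR2}), which concerns the generalized Springer sheaf on $\cN_L/L$, not character sheaves on $G$; you would need an analogous purity statement for your proposed generators, and none is supplied. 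Your geometric heuristic for excluding $I=\widetilde\Delta$ (the walls of the alcove have empty common intersection) is essentially correct, but in the paper this exclusion is built into the index category of Theorem~\ref{glue} rather than derived from a degeneration of the Levi.
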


\begin{rmk} 	
	\label{cuspidal}
	Denote by $\Ch_c^{cusp}(G) \subset \Ch_c(G)$ the full subcategory of cuspidal sheaves (those vanish under all parabolic restrictions). It is well known \cite{BDS,Lu7} (which we learnt from Daniel Juteau) that 
	$\Ch_c^{cusp}(G) \simeq \bigoplus_{I \subset \widetilde{\Delta}, |I| = r :=\rank(G)} ( \CC[\widetilde{W}^I] \#   \SS_{\frz_I[1]} \textup{ -mod}_\textup{fd}) ^{\oplus c_I} \simeq \Vect_\textup{fd} ^{\oplus \Sigma_{|I|=r} c_I }$. The theorem extends this result to all character sheaves.
\end{rmk}

\begin{rmk}
	The adjoint quotient $G/G$ can be identified with $\Omega^1(S^1,\mathfrak{g})/C^{\infty}(S^1,G)$ using the gauge uniformization, by sending a connection to its monodromy. Kitchloo \cite{Kit} defined topological Tits building for Kac-Moody groups $K$ which specializes to $\Omega^1(S^1,\mathfrak{g})$ for $K$ the loop group of $G$. It is easy to see that for usual Tits building of loop group, the equivariant (constructible) functions/sheaves is naturally parametrized by the faces of fundamental alcove, which is naturally identified with the set $\{ I \subsetneq \widetilde{\Delta} \}$. So the decomposition in the Theorem is natural from this perspective.
\end{rmk}
Let us briefly mention some backgrounds as well as strategies for proving the theorem. The theory of character sheaves was introduced by Lusztig \cite{Lu1} via definition (1) above. It serves as a categorification of the character theory of finite group of Lie type, with the functor $\check{R}$ corresponds to Deligne-Lusztig induction, and $R$ to taking central characters. Later, Mirkovic-Vilonen \cite{MV} and Ginzburg \cite{Gi1} showed the equivalence of above definitions over $\CC$. Using definition \textup{(3)}, together with Nadler, we related $\Ch(G)$ with sheaves on unipotent varieties:
\begin{thm}[\cite{LN}]
	\label{glue}
	There are an equivalences of abelian/dg categories:
	$$\Ch^{(\heartsuit)}_{(c)}(G) \simeq \lim_{\{I \subsetneq \widetilde{\Delta} \}^{op}} \Sh^{(\heartsuit)}_{(c)}(\mathcal{N}_{L_I}/L_I)$$
	where in the limit, the inclusion $I \subset I'$ goes the parabolic restriction functor $\Res^{I'}_{I}:\Sh(\mathcal{N}_{L_{I'}}/L_{I'}) \rightarrow \Sh(\mathcal{N}_{L_I}/L_{I})$ along $P^{I'}_{I}:=<T, U_\alpha: \alpha \in I' \textup{ or } -\alpha \in I>.$ 
\end{thm}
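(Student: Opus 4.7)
The strategy is to use the gauge uniformization $G/G \simeq \Omega^1(S^1,\fg)/C^\infty(S^1,G)$ mentioned in the remark, combined with the natural stratification of $G/G$ coming from the fundamental alcove $A$ in the Cartan. Faces of $A$ are in bijection with $I \subsetneq \widetilde{\Delta}$ and biject with the Levis $L_I$ (obtained as centralizers of points of the face, after taking monodromy). This suggests building a Tits-building-style open cover of $G/G$ whose nerve reproduces the face poset, then extracting $\Ch(G)$ as a limit via sheaf-theoretic descent, using the character-sheaf singular support condition to reduce each piece to $\Sh(\cN_{L_I}/L_I)$.

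Concretely, the plan is as follows. First, for each $I \subsetneq \widetilde{\Delta}$, exhibit a locally closed substack $S_I \subset G/G$ of conjugacy classes of elements $g = zu$ whose semisimple part $z$ lies in the regular open part of $Z_{L_I}$ (so that its centralizer is $L_I$) and whose unipotent part $u$ lies in $L_I$. Then $S_I$ fibers over $Z_{L_I}^{\reg}/Z_{L_I}$ (which is locally constant for our purposes) with fiber $\cN_{L_I}/L_I$. Next, thicken each $S_I$ to an open substack $U_I$ using the equivariant contraction of the gauge picture onto constant-gauge representatives in the face $F_I$; the family $\{U_I\}_{I \subsetneq \widetilde{\Delta}}$ is then an open cover of $G/G$ whose Čech nerve is the face poset $\{I \subsetneq \widetilde{\Delta}\}$. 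Standard sheaf-theoretic descent then gives $\Sh(G/G) \simeq \lim_{\{I \subsetneq \widetilde{\Delta}\}^{op}} \Sh(U_I)$. Finally, the character-sheaf condition (singular support in $G \times \cN \subset T^*G$) cuts out the nilpotent direction on each piece: it kills contributions from the $Z_{L_I}^\reg$-direction and leaves only $\cN_{L_I}/L_I$, giving $\Ch(U_I) \simeq \Sh(\cN_{L_I}/L_I)$. Assembling yields the desired equivalence, and the same argument specializes to the constructible and perverse subcategories.

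The main obstacle is twofold. First, realizing the Tits-building cover as a genuine descent cover on the stack $G/G$ requires either working directly with the infinite-dimensional gauge presentation (using Kitchloo's topological Tits building) or a finite-dimensional substitute obtained by truncation/Weyl integration, and then verifying Čech descent for sheaves on the resulting (ind-)stacks — this is delicate because of the interaction between stackiness, open cover combinatorics, and infinite-dimensionality. Second, and more substantively, one must identify the natural transition functor $\Sh(U_I) \to \Sh(U_{I'})$ (for $I \subset I'$), after the reductions $\Ch(U_I) \simeq \Sh(\cN_{L_I}/L_I)$ and similarly for $I'$, with the parabolic restriction $\Res^{I'}_I$ along $P^{I'}_I$. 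Geometrically, the inclusion $U_{I'} \subset U_I$ induces a Grothendieck-Springer-type correspondence between the central retracts, and proving that its pullback-pushforward after imposing nilpotent singular support agrees on the nose with $\Res^{I'}_I$ amounts to running a generalized horocycle correspondence simultaneously at all faces — this compatibility check is the core technical content of the theorem.
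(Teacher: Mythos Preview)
The paper does not prove this statement; it is imported wholesale from \cite{LN} (note the citation in the theorem header). So there is no ``paper's own proof'' to compare against here.

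That said, your outline is consistent with the hints the present paper gives about how \cite{LN} proceeds: the remark immediately following the statement of Theorem~\ref{main} points to the gauge uniformization $G/G \simeq \Omega^1(S^1,\fg)/C^\infty(S^1,G)$ and Kitchloo's topological Tits building for the loop group, whose equivariant sheaves are parametrized by faces of the fundamental alcove, i.e.\ by $\{I \subsetneq \widetilde{\Delta}\}$. Your plan to build a Tits-building cover indexed by faces, run \v{C}ech descent, and then use the nilpotent singular support condition to reduce each local piece to $\Sh(\cN_{L_I}/L_I)$ is the right shape. You have also correctly identified the two genuine technical points: making the descent argument rigorous on the (infinite-dimensional or ind-) presentation, and identifying the transition maps with the parabolic restrictions $\Res^{I'}_I$. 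Whether \cite{LN} handles these via the gauge picture directly or via a finite-dimensional model is something you would have to check in that reference; the present paper gives no further details.
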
	
Combining this with Theorem~\ref{generalizedspringer}, we calculate  $\Ch(G)$ as described in our main theorem.

\subsection{Spectral description of charcter sheaves}
The normalizer of a parabolic subgroup of a Coxeter group $ N_{\widetilde{W}}\widetilde{W}_I$ and its quotient $\widetilde{W}^I$ has been studied in general, e.g in \cite{BH,Bor}. Denote by $\Lambda_I$ the subgroup of translation in $\widetilde{W}^I$, and $W^I:=\widetilde{W}^I/\Lambda_I$, this extends the previous definition of $W^I$ for $I \subset \Delta$. Let $\Lambda_I^*$ be the dual lattice of $\Lambda_I$, and $\check{S}_I:=\Lambda_I^* \otimes \CC^\times.$
When $c_I >0,$ $\widetilde{W}^I$ is a Coxeter group, hence $\widetilde{W}^I \simeq W^I \ltimes \Lambda_I$. Denote by $\mathcal{L}X:=X \times_{X \times X} X $ the derived loop space of $X$. We have the following spectral description of $\Ch(G):$
\begin{cor} 
	\label{spectral}
	Denote by $\widehat{G}:= \coprod_{I \subsetneq \widetilde{\Delta},c_I>0}(\mathcal{L}\check{S}_I / W^I)^{\coprod c_I}$ the derived stack and by $\widehat{G}_{cl}:= \coprod_{I \subsetneq \widetilde{\Delta},c_I>0}(\check{S}_I / W^I)^{\coprod c_I}$ the underline classical stack.  Then there is commutative diagram of equivalences of abelian/dg categories:
	$$\xymatrix{
	\Ch_c^{\heartsuit}(G) \ar[r]^-{\simeq} \ar[d] & \Coh_{0}^\heartsuit(\widehat{G}_{cl}) \ar[d] \\
	  \Ch_c(G) \ar[r]^-{\simeq} \ar@{^(->}[d] & \Coh_{0}(\widehat{G}) \ar@{^(->}[d]  \\
	\Ch(G) \ar[r]^-{\simeq} & \QCoh(\widehat{G})}$$ where $\Coh^{\heartsuit}_{0} /  \Coh_{0}$ denotes the abelian/dg category of coherent sheaves with $0$-dimensional support, and $\QCoh$ the dg category of quasi-coherent sheaves.
\end{cor}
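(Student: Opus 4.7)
The plan is to deduce the corollary directly from Theorem~\ref{main} by identifying, for each $I \subsetneq \widetilde{\Delta}$ with $c_I > 0$, the dg algebra $\CC[\widetilde{W}^I] \# \SS_{\frz_I[1]}$ with the $W^I$-equivariant structure sheaf of $\mathcal{L}\check{S}_I$. The starting observation is that, since $\widetilde{W}^I$ is Coxeter when $c_I>0$, it splits as a semidirect product $\widetilde{W}^I \simeq W^I \ltimes \Lambda_I$, which gives
$$\CC[\widetilde{W}^I] \# \SS_{\frz_I[1]} \simeq \bigl(\CC[\Lambda_I] \otimes_\CC \SS_{\frz_I[1]}\bigr) \# \CC[W^I].$$

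The central computation is the Hochschild--Kostant--Rosenberg identification for $\cO(\mathcal{L}\check{S}_I)$. Since $\check{S}_I = \Lambda_I^* \otimes \CC^\times$ is a smooth affine torus, derived self-intersection of its diagonal gives
$$\cO(\mathcal{L}\check{S}_I) \simeq \Sym_{\cO(\check{S}_I)}\bigl(\Omega^1_{\check{S}_I}[1]\bigr) \simeq \CC[\Lambda_I] \otimes_\CC \Sym\bigl((\Lambda_I \otimes_\ZZ \CC)[1]\bigr).$$
The canonical $W^I$-equivariant isomorphism $\Lambda_I \otimes_\ZZ \CC \simeq \frz_I$ is forced by the definitions: $\Lambda_I$ is the full-rank translation lattice of $\widetilde{W}^I$ acting affinely on $\bigcap_{\alpha \in I} \alpha^\perp \subset X_*(T) \otimes \RR$, whose complexification is exactly $\frz_I \subset \Lie T$. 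Combining with the previous paragraph and descending to the quotient stack yields
$$\QCoh(\mathcal{L}\check{S}_I / W^I) \simeq \bigl(\CC[\widetilde{W}^I] \# \SS_{\frz_I[1]}\bigr)\textup{-mod}.$$
Summing over $I$ with $c_I$ copies and invoking Theorem~\ref{main} then gives $\Ch(G) \simeq \QCoh(\widehat{G})$.

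The upper rows of the diagram follow by restricting to appropriate subcategories on both sides. The dg algebra $\SS_{\frz_I[1]}$ is finite-dimensional over $\CC$, so a finitely generated $\bigl(\CC[\widetilde{W}^I] \# \SS_{\frz_I[1]}\bigr)$-module has $0$-dimensional support over $\check{S}_I/W^I$ if and only if it is finite-dimensional over $\CC$; via Theorem~\ref{main} this matches $\Ch_c(G) \simeq \Coh_0(\widehat{G})$. Passing to classical truncation kills the $\SS_{\frz_I[1]}$-factor, so the heart equivalence $\Ch_c^{\heartsuit}(G) \simeq \Coh_0^{\heartsuit}(\widehat{G}_{cl})$ reduces to the first equivalence of Theorem~\ref{main}, namely $\CC[\widetilde{W}^I]\textup{-mod}^{\heartsuit}_{\textup{fd}}$ on each summand.

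The main technical point is the $W^I$-equivariance of $\Lambda_I \otimes_\ZZ \CC \simeq \frz_I$: the conjugation action of $W^I = \widetilde{W}^I/\Lambda_I$ on $\Lambda_I$ must coincide, after complexification, with the restriction of the natural $W$-action on $\Lie T$ to $\frz_I$. This is essentially tautological from the construction of $\widetilde{W}^I$ as acting on $\bigcap_{\alpha \in I} \alpha^\perp$, but it is the only nontrivial compatibility one has to check; everything else is routine assembly from Theorem~\ref{main} and HKR for tori.
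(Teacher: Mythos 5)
Your proof is correct and takes essentially the same route as the paper: the key step is the HKR identification $\mathcal{L}\check{S}_I \simeq \check{S}_I \times \frz_I^*[-1]$ (i.e.\ $\cO(\mathcal{L}\check{S}_I) \simeq \CC[\Lambda_I]\otimes\SS_{\frz_I[1]}$, using the finite-index inclusion $X_*(Z_I^0)\subset\Lambda_I$ to get $\Lambda_I\otimes\CC\simeq\frz_I$), combined with the semidirect product $\widetilde{W}^I\simeq W^I\ltimes\Lambda_I$, then summing over $I$ and invoking Theorem~\ref{main}. You additionally spell out the $W^I$-equivariance of $\Lambda_I\otimes\CC\simeq\frz_I$ and the upper two rows of the diagram, both of which the paper leaves implicit, but there is no difference in method.
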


We can parametrize of irreducible character sheaves, and calculated  their Hom complex  from the spectral side:

\begin{cor} 
	\label{irreducible character sheaves}
	Denote $\textup{Irr}(G)$ the set of irreducible character sheaves. There is a bijection of set  
		$$\textup{Irr}(G) \longleftrightarrow \{(I,F,s,\rho) | I \subsetneq \widetilde{\Delta}, F \in C_I, s \in \check{S}_I/W^I, \text{ and } \rho \text{ an irreducible representation of } W^I_s \},$$
		 such that
		\begin{enumerate}
			\item   $\Hom_{\Sh(G/G)}^*(\calF_{I,F,s,\rho}, \calF_{I',F',s',\rho'})=0, \text{ for any } (I,F,s) \neq (I',F',s')$,
			\item   $\Hom_{\Sh(G/G)}^*(\calF_{I,F,s,\rho}, \calF_{I,F,s,\rho'})= \Hom^*_{W^I_s} (\rho,\SS_{\frz_I^*[-1] \oplus \frz_I^*[-2]} \otimes \rho' ),$
		\end{enumerate}
	where  $\calF_{I,F,s,\rho} \in \textup{Irr}(G)$ denote the character sheaf corresponding to $(I,F,s,\rho)$. 
\end{cor}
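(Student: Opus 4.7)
The plan is to deduce both the parametrization and the Hom formulas directly from Theorem~\ref{main} by classifying simples block by block and then computing derived Hom's via a local Koszul argument. The key observation is that in each nonzero block — indexed by $(I,F)$ with $I \subsetneq \widetilde{\Delta}$ and $F \in C_I$ — the classification of simples is already governed by the abelian block $\CC[\widetilde{W}^I]\textup{-mod}^\heartsuit_\textup{fd}$, while the higher Ext groups come entirely from the $\SS_{\frz_I[1]}$ factor together with the infinitesimal structure of $\check{S}_I$ at the support point.

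First I will use $\widetilde{W}^I = W^I \ltimes \Lambda_I$ (which holds whenever $c_I > 0$) to rewrite $\CC[\widetilde{W}^I] = \cO(\check{S}_I) \rtimes W^I$. A finite-dimensional simple module over this algebra is set-theoretically supported on a single $W^I$-orbit $[s] \subset \check{S}_I$ and, by standard Clifford theory for smash products with finite groups, is determined by $[s]$ together with an irreducible representation $\rho$ of the stabilizer $W^I_s$. This produces the asserted bijection. Part~(1) is then immediate: different $(I,F)$ label orthogonal summands by construction, and within one block two simples with distinct orbits have disjoint set-theoretic support — a property preserved after tensoring by the $\SS_{\frz_I[1]}$ factor, and hence killing derived Hom's as well.

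For (2), fix $(I,F,s)$ and pass to the completion of $\cO(\check{S}_I)$ at $s$; the category of $\CC[\widetilde{W}^I]\#\SS_{\frz_I[1]}$-modules supported on the $W^I$-orbit of $s$ becomes equivalent to modules over $B \rtimes W^I_s$, where $B := \widehat{\cO}_{\check{S}_I,s} \otimes \SS_{\frz_I[1]}$. Since $W^I_s$ is finite,
\[
\RHom_{B \rtimes W^I_s}(\rho, \rho') = \Hom_{W^I_s}\bigl(\rho,\; \Ext^*_B(\CC, \CC) \otimes \rho'\bigr),
\]
and $\Ext^*_B$ factors as a tensor product along the two factors of $B$. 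The ring $\widehat{\cO}_{\check{S}_I,s}$ is a regular local ring whose cotangent space is canonically $\frz_I$, so its Koszul resolution of $\CC$ gives $\Ext^* = \wedge^\bullet \frz_I^* = \SS_{\frz_I^*[-1]}$. The ring $\SS_{\frz_I[1]}$ is an exterior algebra on $\frz_I$ with generators in cohomological degree $-1$; carrying the internal grading through the bar construction yields $\Ext^* = \SS_{\frz_I^*[-2]}$, the shift $[-2]$ (rather than $[-1]$) arising from the fact that $\frz_I[1]$ sits in degree $-1$. Tensoring and observing that all identifications are $W^I_s$-equivariant by functoriality yields the claimed formula. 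The main technical hurdle I anticipate is this second Koszul computation: one must carry the internal shift through carefully to obtain $\SS_{\frz_I^*[-2]}$ rather than $\SS_{\frz_I^*[-1]}$, and verify that the $W^I_s$-actions on the two Koszul duals both descend from the linear action on $\frz_I$ described before Theorem~\ref{main}; a secondary point, justifying the completion at $s$ on the dg level, is standard since the sheaves involved are supported at a single closed orbit and $\SS_{\frz_I[1]}$ is finite-dimensional.
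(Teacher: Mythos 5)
Your proof is correct and is essentially the paper's own argument with the geometry unpacked on the module side: the paper parametrizes simples by pushing forward irreducible $W^I_s$-representations along closed embeddings $i_{I,F,s}: BW^I_s \hookrightarrow \widehat{G}_{cl}$ and computes $\Hom$'s via the $(i_*,i^!)$-adjunction on $\Coh_0(\widehat{G})$, which is exactly your Clifford-theory classification for $\cO(\check{S}_I)\rtimes W^I$ together with the two Koszul computations $\Ext^*_{\cO(\check{S}_I)}(\CC_s,\CC_s)\simeq\SS_{\frz_I^*[-1]}$ and $\Ext^*_{\SS_{\frz_I[1]}}(\CC,\CC)\simeq\SS_{\frz_I^*[-2]}$. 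The only difference is that you work directly from Theorem~\ref{main} while the paper first repackages that theorem as Corollary~\ref{spectral} and then reads off the answer on $\widehat{G}$.
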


\begin{rmk}
	\label{corollaryrmk}
	\begin{enumerate}[wide, labelwidth=!, labelindent=0pt]
\item 
As the notation suggested, $\widehat{G}$ is analogous to the unitary dual of $G$ in the representation theory of real groups.			 
\item 
Let $\widehat{G}^{0} \subset \widehat{G}$ consists of $0$-dimensional components.  
We see from Remark~\ref{cuspidal} that inside the above equivalence, we have $\Ch^{cusp}(G) \simeq \QCoh(\widehat{G}^{0})$. 
\item 
The analogous statement for a torus $T$ is:  take $\widehat{T} = \mathcal{L}\check{T}$, we have $\Ch(T) = \Loc(T/T) \simeq \Loc(T) \times \Loc(BT) \simeq \QCoh(\check{T}) \times \QCoh(\frt^*[-1]) \simeq \QCoh(\mathcal{L}\check{T})=\QCoh(\widehat{T})$, where $\Loc$ denotes the dg category of local systems.

\item 
By \cite{AHJR14}, the constant sheaf on $G$ corresponds to $(\emptyset, *, 1,\epsilon)$, where $* \in C_{T}$ is the unique element (which corresponds to the constant sheaf at $1 \in T$), $1 \in \check{T}/W$ the identify and $\epsilon$ is the sign representation of $W$. In this case, last Corollary recovers the well known result on the cohomology of adjoint quotient:
$H^*(G/G) = \Hom^*_{W}(\epsilon,\SS_{\frt^*[-1] \oplus \frt^*[-2]} \otimes \epsilon ) \simeq \Hom^*_{W}(\CC,\SS_{\frt^*[-1] \oplus \frt^*[-2]} \otimes \CC ) \simeq \SS_{\frt^*[-1] \oplus \frt^*[-2]}^{W}.$


\end{enumerate}
\end{rmk}

\begin{eg}
	\label{example}
We give examples of $\widehat{G}$ for $G$ of rank $\leq 3$. Recall that the set $\{ I \subsetneq \widetilde{\Delta} \}$ can be identifed with the set of faces of the affine alcove. Below we will draw the affine simple coroots and affine alcove in $X_*(T) \otimes \RR$. We denote:
\begin{itemize}
	\item[]  
	 \begin{tikzpicture} 
	\draw node[circle,fill,inner sep=1pt]{} ;
	\end{tikzpicture} : a vertex of the affine alcove with $c_I \neq 0$;
	\item[] 
	\begin{tikzpicture}[
	circ/.style={
		circle,
		fill=white,
		draw,
		outer sep=0pt,
		inner sep=1pt
	}]
  		\draw node[circ]{};
    \end{tikzpicture}
    : a vertex of the affine alcove with $c_I = 0$;
    \item[]
    \begin{tikzpicture} 
    \draw [very thick] (0,0) -- (1,0);
    \end{tikzpicture} : an edge of the affine alcove with $c_I \neq 0$;
       \item[]
    \begin{tikzpicture} 
    \draw [very thick, dashed] (0,0) -- (1,0);
    \end{tikzpicture} : an edge of the affine alcove with $c_I = 0$;
           \item[]
    \begin{tikzpicture} 
   \draw [fill=lightgray,lightgray] (0,0) --(1,0) --(1,0.3) -- (0,0) ;
    \end{tikzpicture} : a $2$-dimensional face of the affine alcove with $c_I \neq 0$;
      \item[]
    \begin{tikzpicture} 
    \draw [fill=white] (0,0) --(1,0) --(1,0.3) -- (0,0) ;
    \end{tikzpicture} : a $2$-dimensional face of the affine alcove with $c_I = 0$;
    \item[]
   \begin{tikzpicture} 
   \draw [->] (0,0) --(1,0) ;
   \end{tikzpicture} : a simple coroot or the highest coroot $\alpha^\vee_h$; 
    \item[]
   numbers $1,2,...: c_I$ of the corresponding face $I$;
    \item[]
   $\mathfrak{S}_n$: the symmetric group on $n$ letters;
   \item[]
   $\mathfrak{D}_n$: the dihedral group of order $2n$.
\end{itemize}	
\begin{itemize}
\item	$G=A_1$
	
$$ \begin{tikzpicture}
\draw [very thick] (0,0) node[circle,fill,inner sep=1pt,label=below:$1$](a){} -- (1,0) node[label=below:$1$](c){} -- (2,0) node[circle,fill,inner sep=1pt,label=below:$1$](b){};
\draw [->] (0,0) -- (4,0) node[right]{$\alpha^\vee_h$};
\end{tikzpicture}$$
$\widehat{G}= \mathcal{L}\CC^\times /\frS_2 \coprod * \coprod *.$

\item $G=A_2$
$$ \begin{tikzpicture}
\draw [fill=lightgray,lightgray] (0,0) --(2,0) --(1,1.73) -- (0,0) ;
\draw [->] (0,0) -- (3,-1.73) ;
\draw [->] (0,0) -- (3,1.73) node[right]{$\alpha^\vee_h$} ;
\draw [->] (0,0) -- (0,3.46) ;
\draw [dashed,very thick] (0,0)  -- (2,0) ;
\draw [dashed,very thick] (2,0) -- (1,1.73);
\draw [dashed,very thick] (0,0) -- (1,1.73);
\node at (1,0.6) {1};
\draw (0,0) node[circle,fill,inner sep=1pt,label=below:$2$]{};
\draw (2,0) node[circle,fill,inner sep=1pt,label=below:$2$]{};
\draw (1,1.73) node[circle,fill,inner sep=1pt,label=above:$2$]{};
\end{tikzpicture}$$
$\widehat{G}= \mathcal{L}(\CC^{\times})^2/\frS_3 \coprod *^{\coprod 2} \coprod *^{\coprod 2} \coprod *^{\coprod 2}.$

\item $G=B_2$

$$ \begin{tikzpicture}[
circ/.style={
	circle,
	fill=white,
	draw,
	outer sep=0pt,
	inner sep=1pt
}]
\draw [fill=lightgray,lightgray] (0,0) --(2,0) --(2,2) -- (0,0) ;
\node [below] at (1,0) {1};
\node [right] at (2,1) {1};
\node at (1.4,0.6) {1};
\draw [->] (0,0) -- (4,0) node[right]{$\alpha^\vee_h$} ;
\draw [->] (0,0) -- (0,4) ;
\draw [->] (0,0) -- (4,-4) ;
\draw [dashed, very thick] (0,0)  -- (2,2) ;
\draw [very thick] (0,0) -- (2,0);
\draw [very thick] (2,0) --(2,2);
\draw  (0,0) node[circ] {} ;
\draw  (2,2) node[circ] {} ;
\draw (2,0) node[circle,fill,inner sep=1pt,label=below:$1$](b){};
\end{tikzpicture}$$
$\widehat{G}=\mathcal{L}(\CC^{\times})^2/\frD_4 \coprod \calL\CC^{\times}/\frS_2 \coprod \calL\CC^{\times}/\frS_2 \coprod *$.

\item $G=G_2$
$$ \begin{tikzpicture}[
circ/.style={
	circle,
	fill=white,
	draw,
	outer sep=0pt,
	inner sep=1pt
}]
\draw [fill=lightgray,lightgray] (0,0) --(2,0) --(1.5,0.87) -- (0,0) ;
\node at (1.3,0.3) {1};
\draw [dashed, very thick] (0,0) --(2,0);
\draw [dashed, very thick] (2,0) --(1.5,0.87);
\draw [dashed, very thick] (1.5,0.87) -- (0,0);

\draw [->] (0,0) -- (0,3.46);
\draw [->] (0,0) -- (3,-5.19);
\draw [->] (0,0) --(3,1.73) node[right]{$\alpha^\vee_h$};

\draw  (1.5,0.87) node[circ] {} ;
\draw (2,0) node[circle,fill,inner sep=1pt,label=below:$2$]{};
\draw (0,0) node[circle,fill,inner sep=1pt,label=below:$1$]{};
\end{tikzpicture}$$
$\widehat{G}= \calL(\CC^\times)^2/\frD_6 \coprod * \coprod *^{\coprod 2}.$
\end{itemize}
For the rank $3$ examples below, we omit the labeling $c_I$ for the interior of affine alcove (which is always $1$). We also draw 
\begin{tikzpicture}[scale=0.2, rotate around x = 10, rotate around y =-5, rotate around z = -5]
\coordinate (A1) at (0,0,0);
\coordinate (A2) at (0,2,0);
\coordinate (A3) at (1,1,1);
\coordinate (A4) at (1,1,-1);

\coordinate (O) at (-1,-1,-1);
\coordinate (A) at (-1,1,-1);
\coordinate (B) at (-1,1,1);
\coordinate (C) at (-1,-1,1);
\coordinate (D) at (1,-1,-1);
\coordinate (E) at (1,1,-1);
\coordinate (F) at (1,1,1);
\coordinate (G) at (1,-1,1);

\draw [dashed] (O) -- (C);
\draw (C) -- (G);
\draw (G) -- (D);
\draw [dashed] (D) -- (O);
\draw [dashed] (O) -- (A);
\draw (A) -- (E);
\draw (E) -- (D);
\draw (A) -- (B);
\draw (B) -- (C);
\draw (E) -- (F);
\draw (F) -- (G);
\draw (B) -- (F);
\end{tikzpicture}
 a reference cube centered at origin.
\begin{itemize}
\item $G=A_3$
$$\begin{tikzpicture}[rotate around x = 10, rotate around y =-5, rotate around z = -5]
\coordinate (A1) at (0,0,0);
\coordinate (A2) at (0,2,0);
\coordinate (A3) at (1,1,1);
\coordinate (A4) at (1,1,-1);

\coordinate (O) at (-1,-1,-1);
\coordinate (A) at (-1,1,-1);
\coordinate (B) at (-1,1,1);
\coordinate (C) at (-1,-1,1);
\coordinate (D) at (1,-1,-1);
\coordinate (E) at (1,1,-1);
\coordinate (F) at (1,1,1);
\coordinate (G) at (1,-1,1);

\draw [dashed] (O) -- (C);
\draw (C) -- (G);
\draw (G) -- (D);
\draw [dashed] (D) -- (O);
\draw [dashed] (O) -- (A);
\draw (A) -- (E);
\draw (E) -- (D);
\draw (A) -- (B);
\draw (B) -- (C);
\draw (E) -- (F);
\draw (F) -- (G);
\draw (B) -- (F);
\draw [dashed] (A1) -- (1,0,1);
\draw [->] (1,0,1) -- (2,0,2);
\draw [dashed] (A1) -- (1,0,-1);
\draw [->] (1,0,-1) -- (2,0,-2);
\draw [dashed] (A1) -- (-1,1,0);
\draw [->] (-1,1,0) -- (-2,2,0);
\draw [dashed] (A1) -- (1,1,0);
\draw [->] (1,1,0) -- (2,2,0) node[right]{$\alpha^\vee_h$} ;

\draw [very thick] (A1) -- (A2);
\draw [very thick] (A3) -- (A4);
\draw [dashed, very thick] (A1) -- (A3);
\draw [dashed, very thick] (A1) -- (A4);
\draw [dashed, very thick] (A2) -- (A4);
\draw [dashed, very thick] (A2) -- (A3);
\draw (0,0,0) node[circle,fill,inner sep=1pt,label=below:$2$]{};
\draw (0,2,0) node[circle,fill,inner sep=1pt,label=above:$2$]{};
\draw (1,1,1) node[circle,fill,inner sep=1pt,label=below:$2$]{};
\draw (1,1,-1) node[circle,fill,inner sep=1pt,label=above:$2$]{};
\node [left] at (0,1,0) {1};
\node [right] at (1,1,0) {1};
\end{tikzpicture}$$
$\widehat{G}=\calL(\CC^{\times})^3/\frS_4 \coprod \calL\CC^{\times}/\frS_2 \coprod \calL\CC^{\times}/\frS_2 \coprod *^{\coprod 2} \coprod *^{\coprod 2} \coprod *^{\coprod 2} \coprod *^{\coprod 2}.$

\item $G=B_3$
$$\begin{tikzpicture}[rotate around x = 10, rotate around y = -5, rotate around z = -5,
circ/.style={
	circle,
	fill=white,
	draw,
	outer sep=0pt,
	inner sep=1pt
}]

\coordinate (O) at (-1,-1,-1);
\coordinate (A) at (-1,1,-1);
\coordinate (B) at (-1,1,1);
\coordinate (C) at (-1,-1,1);
\coordinate (D) at (1,-1,-1);
\coordinate (E) at (1,1,-1);
\coordinate (F) at (1,1,1);
\coordinate (G) at (1,-1,1);

\draw [dashed] (O) -- (C);
\draw (C) -- (G);
\draw (G) -- (D);
\draw [dashed] (D) -- (O);
\draw [dashed] (O) -- (A);
\draw (A) -- (E);
\draw (E) -- (D);
\draw (A) -- (B);
\draw (B) -- (C);
\draw (E) -- (F);
\draw (F) -- (G);
\draw (B) -- (F);

\coordinate (U) at (0,0,0);
\coordinate (V) at (2,0,0);
\coordinate (W) at (1,0,-1);
\coordinate (X) at (1,1,-1);

\draw [dashed] (U) -- (0,1,0);
\draw [->] (0,1,0) -- (0,4,0);
\draw [dashed] (U) -- (1,0,1);
\draw [->] (1,0,1) -- (2,0,2);
\draw [->] (U) -- (2,0,-2) node[right]{$\alpha^\vee_h$} ;
\draw [->, dashed] (U) -- (0,-2,-2);

\draw [dashed, very thick] (U) -- (V);
\draw [very thick] (W) -- (X);
\draw [very thick] (U) -- (W);
\draw [dashed, very thick] (U) -- (X);
\draw [dashed, very thick] (V) -- (X);
\draw [very thick] (V) -- (W);
\draw (U) node[circ]{};
\draw (V) node[circ]{};
\draw (W) node[circ]{};
\draw (X) node[circ]{};
\node at (0.5,0,-0.5) {1};
\node at (1.5,0,-0.5) {1};
\node at (0.9,0.5,-1) {1};
\end{tikzpicture}$$
$\widehat{G}= \calL(\CC^{\times})^3/ (\frS_3 \ltimes \{\pm 1\}^3) \coprod \calL \CC^{\times}/\frS_2 \coprod \calL \CC^{\times}/\frS_2 \coprod \calL \CC^{\times}/\frS_2.$

\item $G=C_3$
$$\begin{tikzpicture}[scale=1.5, rotate around x = 10, rotate around y = -5, rotate around z = -5,
circ/.style={
	circle,
	fill=white,
	draw,
	outer sep=0pt,
	inner sep=1pt
}]

\coordinate (U) at (0,0,0);
\coordinate (V) at (1,0,0);
\coordinate (W) at (1,0,-1);
\coordinate (X) at (1,1,-1);
\draw [fill=lightgray,lightgray] (U) --(V) --(W) -- (U);
\draw [fill=lightgray,lightgray] (X) --(V) --(W) -- (X);

\coordinate (O) at (-1,-1,-1);
\coordinate (A) at (-1,1,-1);
\coordinate (B) at (-1,1,1);
\coordinate (C) at (-1,-1,1);
\coordinate (D) at (1,-1,-1);
\coordinate (E) at (1,1,-1);
\coordinate (F) at (1,1,1);
\coordinate (G) at (1,-1,1);

\draw [dashed] (O) -- (C);
\draw (C) -- (G);
\draw (G) -- (D);
\draw [dashed] (D) -- (O);
\draw [dashed] (O) -- (A);
\draw (A) -- (E);
\draw (E) -- (D);
\draw (A) -- (B);
\draw (B) -- (C);
\draw (E) -- (F);
\draw (F) -- (G);
\draw (B) -- (F);

\draw [dashed] (U) -- (0,1,0);
\draw [->] (0,1,0) -- (0,2,0);
\draw [dashed] (U) -- (1,0,1);
\draw [->] (1,0,1) -- (2,0,2);
\draw [dashed] (U) -- (1,0,0);
\draw [->] (1,0,0) -- (2,0,0) node[right]{$\alpha^\vee_h$} ;
\draw [->, dashed] (U) -- (0,-2,-2);

\draw [dashed, very thick] (U) -- (V);
\draw [dashed, very thick] (W) -- (X);
\draw [dashed, very thick] (U) -- (W);
\draw [dashed, very thick] (U) -- (X);
\draw [dashed, very thick] (V) -- (X);
\draw [very thick] (V) -- (W);

\draw (U) node[circle,fill,inner sep=1pt,label=below:$1$]{};
\draw (V) node[circ]{};
\draw (W) node[circ]{};
\draw (X) node[circle,fill,inner sep=1pt,label=above:$1$]{};
\node [right] at (1,0,-0.5) {1};
\node at (0.75,0,-0.25) {1};
\node at (1,0.25,-0.75) {1};

\end{tikzpicture}$$
$\widehat{G} = \calL(\CC^{\times})^3/ (\frS_3 \ltimes \{\pm 1\}^3) \coprod \calL (\CC^\times)^2/\frD_4 \coprod \calL(\CC^\times)^2/\frD_4 \coprod \calL \CC^{\times}/\frS_2 \coprod * \coprod *.$

\end{itemize}

\end{eg}

\subsection{Relation to Betti Geometric Langlands Conjecture for nodal curve}

Let $\Sigma$ be a compact Riemann surface, $\Bun_G(\Sigma)$ be the moduli stack of principal $G$-bundles on $\Sigma$. Denote $\check{G}$ the Langlands dual group of $G$, and $\Locsys_{\check{G}}(\Sigma)$ the derived moduli stack of (Betti) $\check{G}$-local systems on $\Sigma$. Ben-Zvi-Nadler has proposed:

\begin{conj}[\cite{BZN16}] There is an equivalence of dg-categories 
$$	\Sh_\N (\Bun_G(\Sigma)) \simeq  \textup{IndCoh}_{\check{\N}} (\Locsys_{\check{G}}(\Sigma))$$
between the dg-category of sheaves with nilpotent singular support on $\Bun_G(\Sigma)$ and the dg-category of Ind-coherent sheaves with nilpotent singular support.
\end{conj}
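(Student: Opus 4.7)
The plan is to verify this conjecture in the case that $\Sigma$ is a nodal rational curve (i.e.\ $\mathbb{P}^1$ with two points $0$ and $\infty$ identified), where the results of this paper give direct access to the automorphic side. The idea is to compute both sides independently and match them through Corollary~\ref{spectral}.

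On the automorphic side, the gluing description of bundles on a nodal curve identifies $\Bun_G(\Sigma)$ with the adjoint quotient $G/G$: a bundle on $\Sigma$ is a bundle on $\mathbb{P}^1$ together with an isomorphism of fibres at $0$ and $\infty$, which on the open substack of generically trivial bundles records an element $g \in G$ up to conjugation. Under this identification the nilpotent subset of $T^*(G/G)$ becomes $G \times \mathcal{N}$, so by characterization~(3) of character sheaves, $\Sh_{\mathcal{N}}(\Bun_G(\Sigma)) \simeq \Ch(G)$; Corollary~\ref{spectral} then rewrites this as $\QCoh(\widehat{G})$. On the spectral side, $\Sigma$ is homotopy equivalent to $S^1$, so $\Locsys_{\check G}(\Sigma) \simeq \mathcal{L}(B\check G) = \check G/\check G$ as a derived stack, and the conjecture reduces to producing an equivalence $\textup{IndCoh}_{\check{\mathcal{N}}}(\check G/\check G) \simeq \QCoh(\widehat{G})$.

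To produce this spectral-side equivalence, I would stratify $\check G/\check G$ by the Langlands-dual Levi type $\check L_I$ of the centralizer of the semisimple part of the monodromy (equivalently by the face of the affine alcove). Each stratum has semisimple skeleton $\check S_I/W^I$, while the unipotent direction together with the nilpotent singular support condition should produce the full derived loop space $\mathcal{L}\check S_I/W^I$; the multiplicities $c_I$ ought to arise from cuspidal local systems on $\check L_I$, matching the cuspidal data on $L_I$ under the Langlands-dual bijection from Lusztig's classification.

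The principal obstacle is the Koszul-duality identification of derived structures: on the automorphic side the factor $\SS_{\frz_I[1]}$ comes from the equivariant cohomology of the cuspidal stratum in $\mathcal{N}_{L_I}/L_I$, while on the spectral side one expects $\SS_{\frz_I^*[-2]}$ to emerge from the singular-support condition encoded by $\check{\mathcal{N}}$ at unipotent points of $\check G/\check G$. Matching these requires a careful local Koszul-duality analysis, in the spirit of Bezrukavnikov--Finkelberg or Arinkin--Gaitsgory. Once such a local identification is in place, summing over the index set $\{I \subsetneq \widetilde\Delta\}$ is immediate from Theorem~\ref{main}; extending the result to arbitrary nodal curves would then proceed by a Mayer--Vietoris / gluing argument relating $\Bun_G$ and $\Locsys_{\check G}$ of a nodal curve to those of its normalization.
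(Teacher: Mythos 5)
The statement you are addressing is labelled a \emph{Conjecture} in the paper, and the paper does not prove it. After stating it, the author only remarks that Corollary~\ref{spectral} should ``appear as the semistable part'' of the conjecture for the nodal curve, and then \emph{verifies the full conjecture only for a torus} (the short computation at the end of that subsection). For general simple simply-connected $G$ at a nodal curve, the paper offers no argument, nor does it claim one. So there is no ``paper's own proof'' to compare against; you are attempting to prove something the paper leaves open.

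Beyond that, your sketch contains a genuine error on the spectral side. You assert that $C_0$ is homotopy equivalent to $S^1$, so that $\Locsys_{\check G}(C_0) \simeq \mathcal{L}(B\check G) = \check G/\check G$. This is false: $\mathbb{P}^1$ is topologically $S^2$, and identifying two of its points gives $S^2 \vee S^1$, not $S^1$. The paper makes this precise via $C_0 \simeq S^2 \cup_{\ast \sqcup \ast} \ast$ and computes $\Locsys_{\check G}(C_0) \simeq \check\frg[-1]/\check G \times_{B\check G \times B\check G} B\check G \simeq (\check\frg[-1] \times \check G)/\check G$. The extra factor $\check\frg[-1]$ coming from the 2-cell is precisely what is supposed to produce the derived structure of $\widehat G$; dropping it makes the proposed matching of Koszul dualities fail at the outset. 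Your identification of the automorphic side is also off: the paper only asserts $\Bun_G^{0,ss}(C_0) \simeq G/G$ (the degree-$0$ semistable open substack), not $\Bun_G(C_0) \simeq G/G$, so even a successful execution of your plan would establish only the ``semistable part'' of the conjecture, which is what the paper already indicates as an expectation rather than a theorem. The remaining steps (stratifying $\check G/\check G$ by Levi type, matching cuspidal data across Langlands duality, and a local Koszul-duality analysis) are plausible directions but are all left unargued; none of them appear in the paper, which simply does not attempt this.
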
	

Denote $\Bun^{0,ss}_G(\Sigma) \subset \Bun_G(\Sigma)$ the open substack consists of degree 0 and semistable bundles. Take $\Sigma=C_0$ the nodal curve, we have $\Bun^{0,ss}_G(C_0) \simeq G/G$ and $\Sh_\N(\Bun^{0,ss}_G(C_0)) \simeq \Ch(G)$ . Hence we expect that Corollary~\ref{spectral} appears as ``semistale part'' of above Conjecture for $C_0$. By normalization, we have $C_0 \simeq S^2 \cup_{* \coprod *} * ,$ and $\Locsys_{\check G}(C_0) \simeq \check{\frg}[-1]/ \check G \times_{B \check G \times B\check G} B\check G \simeq (\check{\frg}[-1] \times  \check G)/ \check G$. From this perspective, the derived structure on $\widehat{G}$ has a natural explanation: it should come from the 2-cell in $C_0$. 

In the case for torus $T$, Conjecture can be easily verified: $\Sh_\N(\Bun_T(C_0)) \simeq \Loc(X_*(T) \times T/T ) \simeq \Loc(X_*(T)) \otimes \Loc(T/T) \simeq \QCoh(B\check{T}) \otimes \QCoh(\widehat{T}) \simeq \QCoh(\Locsys_{\check T}(C_0)) \simeq \textup{IndCoh}_{\check{\N}}(\Locsys_{\check T}(C_0)).$ We see that the equivalence in Remark~\ref{corollaryrmk} (3) appears as the degree 0 semistable part corresponding to $0 \in X_*(T)$.

\section{Preliminary on dg categories}

We shall work in the context of pre-triangulated dg categories, or equivalently $k$-linear stable $\infty$-categories, for $char(k)=0$. See \cite{Toe07, Lur2} for reference. Denote $\Vect_{\textup{(fd)}}$ the dg category of (finite dimensional) vector spaces,  $A\textup{-mod}_\textup{fd}$ the dg category of (finite dimensional) $A$-modules and $A\textup{-perf}$ the dg category of perfect $A$-modules.
\subsection{Karoubi and compact generation}
Let $\mathscr{C}_0$ be an (non-cocomplete) idempotent complete dg category, we say a collection of objects $S$ \textit{Karoubi generate} $\mathscr{C}_0$ if every object in $\mathscr{C}_0$ can be obtained from $S$ by finite interation of cones of a morphism, and taking direct summand of a object. 

\begin{prop}
	\label{Karoubigenerate}
	If $\mathscr{C}_0$ is Karoubi generate by an object $c$, then $\mathscr{C}_0 \simeq \End(c)$-\textup{perf}.
\end{prop}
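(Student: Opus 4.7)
The plan is to realize the equivalence through the Yoneda-type functor
\[ F := \RHom_{\mathscr{C}_0}(c,-) \colon \mathscr{C}_0 \longrightarrow \End(c)\textup{-mod}, \]
and to show that it factors as an equivalence onto the full subcategory $\End(c)\textup{-perf}$. This is the familiar one-object Morita principle: an idempotent complete dg category Karoubi-generated by a single object should be recovered from the endomorphism dg algebra of that object. Three things need checking---that $F$ lands in perfect modules, that $F$ is fully faithful, and that $F$ is essentially surjective onto $\End(c)\textup{-perf}$---and all three reduce to the tautology $F(c) = \End(c)$ combined with the exactness and retract-preservation of $F$.

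For the first, let $\mathscr{D} \subset \mathscr{C}_0$ be the full subcategory of objects $x$ with $F(x) \in \End(c)\textup{-perf}$. Since $F$ is exact and preserves direct summands, $\mathscr{D}$ is closed under cones (hence shifts) and retracts; as $c \in \mathscr{D}$, Karoubi generation forces $\mathscr{D} = \mathscr{C}_0$. For fully faithfulness, fix $y$ and let $\mathscr{D}_y \subset \mathscr{C}_0$ consist of those $x$ for which the natural map
\[ \RHom_{\mathscr{C}_0}(x,y) \longrightarrow \RHom_{\End(c)}(F(x), F(y)) \]
is a quasi-isomorphism. Again $\mathscr{D}_y$ is closed under cones and retracts contravariantly in $x$, and it contains $c$ tautologically, so $\mathscr{D}_y = \mathscr{C}_0$ for every $y$. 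For essential surjectivity, the essential image of $F$ is a stable subcategory of $\End(c)\textup{-mod}$ that is closed under retracts and contains $\End(c)$; by the definition of the perfect subcategory as the Karoubi closure of the free rank-one module, this image contains $\End(c)\textup{-perf}$, and combined with the first point we obtain the asserted equivalence.

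There is no substantial obstacle in this argument: the whole thing is the standard Morita reconstruction packaged to make formal use of the hypothesis of Karoubi generation. The only point requiring foundational care is that in the dg (equivalently stable $\infty$-categorical) setting the functor $F$ canonically lifts to one valued in $\End(c)\textup{-mod}$, i.e.\ the enriched Yoneda embedding applied to the one-object subcategory $\{c\} \subset \mathscr{C}_0$; for this I would appeal to the references \cite{Toe07,Lur2} cited at the start of this section.
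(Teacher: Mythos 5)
The paper states this proposition without proof (it is regarded there as a standard consequence of Morita theory for dg categories, referring to \cite{Toe07,Lur2}), so there is no in-text argument to compare against. Your proposal is the standard and correct one-object Morita reconstruction: it verifies, via Karoubi generation and exactness/retract-preservation of $\RHom_{\mathscr{C}_0}(c,-)$, that the functor lands in $\End(c)$-perfect modules, is fully faithful, and hits all of $\End(c)\textup{-perf}$; the only point you gloss over---that the essential image of the fully faithful functor $F$ is closed under retracts---follows from the idempotent completeness of $\mathscr{C}_0$, which is part of the paper's standing hypothesis on $\mathscr{C}_0$, so no gap remains.
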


let $\mathscr{C}$ be cocomplete dg category, an object $c \in \mathscr{C}$ is \textit{compact} if the functor $\Map(c,-): \mathscr{C} \to \Vect$ preserves colimit. A collection of objects $S$ $\textit{generate}$ $\mathscr{C}$ if for any $c \in C$, we have $\Map(s,c)=0, \forall s \in S \Rightarrow c =0.$ Denote $\mathscr{C}^c \subset \mathscr{C}$ the full subcategory consists of compact objects.

\begin{prop}
	\label{compactgenerate}
	If a cocomplete dg category $\mathscr{C}$ is generated by a compact object $c$, then $\mathscr{C} \simeq \End(c)$-\textup{mod}.
\end{prop}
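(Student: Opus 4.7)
The plan is to exhibit the obvious functor
\[
F := \Map_{\mathscr{C}}(c, -) : \mathscr{C} \longrightarrow \End(c)\text{-mod}
\]
(with $\End(c) = \Map(c,c)$ acting on $\Map(c, x)$ by precomposition) as an adjoint equivalence. Since $c$ is compact, $F$ preserves colimits; being corepresentable it also preserves limits. By presentability of $\End(c)\text{-mod}$ and the adjoint functor theorem in the stable $\infty$-categorical setting (see \cite{Lur2}), $F$ admits a left adjoint $L$, which can be described by the relative tensor product $L(M) \simeq M \otimes_{\End(c)} c$; in particular $L(\End(c)) \simeq c$ and $L$ preserves colimits.

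Next I would check that the unit $\eta_M : M \to F L(M)$ is an equivalence for all $M$. For $M = \End(c)$ this is the tautology $\End(c) \risom \Map(c, c)$. Since both $L$ and $F$ preserve colimits and $\End(c)$ generates $\End(c)\text{-mod}$ under colimits, the full subcategory on which $\eta$ is an equivalence is colimit-closed and contains $\End(c)$, hence is all of $\End(c)\text{-mod}$. The hypothesis that $c$ generates $\mathscr{C}$ is precisely the statement that $F$ is conservative (in the stable setting, $F(x)=0 \Rightarrow x=0$ upgrades to reflection of equivalences), so the counit $\varepsilon_x : L F(x) \to x$ is also an equivalence: the triangle identity gives $F(\varepsilon_x) \circ \eta_{F(x)} = \id_{F(x)}$, forcing $F(\varepsilon_x)$ to be an equivalence, and conservativity transfers this to $\varepsilon_x$.

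The main (and essentially only) obstacle is foundational: constructing the left adjoint $L$---equivalently, making sense of the relative tensor product $M \otimes_{\End(c)} c$---carefully in the dg / stable $\infty$-categorical framework, so that $L(\End(c)) \simeq c$ and $L$ is genuinely left adjoint to $F$. Modulo this step, which is standard and can be extracted from \cite{Lur2} or from the classical Schwede--Shipley theorem, the argument is the direct analog at the cocomplete level of Proposition~\ref{Karoubigenerate} and is purely formal.
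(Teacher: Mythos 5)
Your proof is correct and is exactly the standard $\infty$-categorical Schwede--Shipley argument; the paper states this proposition without proof (citing it implicitly as a standard fact from \cite{Lur2}), so there is nothing to compare against, but your outline is precisely the expected one. The only cosmetic point worth noting is the left/right module convention (precomposition by $\End(c)$ makes $\Map(c,x)$ a \emph{right} $\End(c)$-module, so one should either use $\End(c)^{\op}$ or act by postcomposition after dualizing), and that ``$F$ is conservative'' uses stability to upgrade ``detects zero objects'' to ``detects equivalences''---both of which you in effect acknowledge.
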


\begin{prop}
	\label{preserve compact object}
Let $F:\mathscr{C}_1 \to \mathscr{C}_2$ be a continuous functor between two dg categories. Assume that $F$ has a continuous right adjoint, then $F(\mathscr{C}^c_1) \subset \mathscr{C}^c_2$.
\end{prop}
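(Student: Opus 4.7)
The proof plan is an entirely formal chase through the adjunction, relying on the standard characterization of compactness as preservation of filtered colimits (equivalently, of all colimits in the stable setting) by the corepresented functor $\Map(-, ?)$.

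Let $G:\mathscr{C}_2 \to \mathscr{C}_1$ denote the continuous right adjoint of $F$, and fix $c \in \mathscr{C}_1^c$. The goal is to show that $\Map_{\mathscr{C}_2}(F(c), -)$ commutes with arbitrary (equivalently, filtered) colimits. Given a diagram $\{x_i\}$ in $\mathscr{C}_2$, I would produce the chain of equivalences
$$
\Map_{\mathscr{C}_2}\bigl(F(c), \colim_i x_i\bigr)
\;\simeq\; \Map_{\mathscr{C}_1}\bigl(c, G(\colim_i x_i)\bigr)
\;\simeq\; \Map_{\mathscr{C}_1}\bigl(c, \colim_i G(x_i)\bigr)
\;\simeq\; \colim_i \Map_{\mathscr{C}_1}\bigl(c, G(x_i)\bigr)
\;\simeq\; \colim_i \Map_{\mathscr{C}_2}\bigl(F(c), x_i\bigr),
$$
where the first and last steps are the $(F,G)$-adjunction, the second step uses the continuity hypothesis on $G$ (i.e.\ that $G$ preserves colimits), and the third step uses the compactness of $c$. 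Chasing this identification on the identity of $\colim_i x_i$ shows that the canonical map $\colim_i \Map(F(c), x_i) \to \Map(F(c), \colim_i x_i)$ is an equivalence, so $F(c) \in \mathscr{C}_2^c$.

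There is no real obstacle: the only hypothesis actually used is the continuity of the right adjoint $G$; continuity of $F$ itself is automatic from the existence of a right adjoint and is only mentioned for context. (In fact, the usual sharper statement is that $F$ preserves compact objects if and only if its right adjoint is continuous.) If one wanted to be more careful about the $\infty$-categorical formalism, the above chain of equivalences would be invoked at the level of mapping spectra in the stable $\infty$-category, citing \cite{Lur2}, but the content is identical.
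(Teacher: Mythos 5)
Your proof is correct, and it is the standard argument: adjunction, continuity of $G$, compactness of $c$, adjunction again. The paper states this proposition without proof (treating it as a well-known fact about presentable stable $\infty$-categories), so there is nothing to compare against; your chain of equivalences is exactly the expected justification, and your remark that continuity of $F$ itself is automatic from the existence of a right adjoint (and that the converse implication also holds) is accurate.
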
	

\subsection{Group actions on dg categories}
\label{group action on category}
Let $\Gamma$ be a discrete group acting (strictly) on a dg algebra $A$,  the smash product $\CC[\Gamma] \# A$ is by definition a dg algebra whose underline dg vector space is $\CC[\Gamma] \otimes A$, and the multiplication is given by $(\gamma \otimes a) \cdot (\gamma' \otimes a'):=(\gamma\gamma' \otimes \gamma'(a)a')$. On the other hand, $\Gamma$ acts on the category $A\textup{-mod}$, which we view as a functor $Act: B\Gamma \to \textup{DGCat}:=$ the category of dg categories. Denote by $Act^L: B\Gamma^{op} \to \textup{DGCat}_\textup{cont}:=\textup{the category of dg categories with colimit preserving functors}$, where  $Act^L(\gamma):=$ the left adjoint of $Act(\gamma)$, for $\gamma$ a morphism of $B\Gamma$. Denote $A\textup{-mod}^{\Gamma}:=\lim Act$, and $A\textup{-mod}_{\Gamma}:=\colim Act^L$ the invariants and coinvariants. The following equivalent categories are by definition the dg category of $\Gamma$-equivariant  $A$-modules:
\begin{prop} There are equivariances of dg categories:
 $$ \CC[\Gamma] \# A\textup{-mod} \simeq A\textup{-mod}^{\Gamma} \simeq A\textup{-mod}_{\Gamma}$$
\end{prop}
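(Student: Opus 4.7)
The plan is to establish the two equivalences separately; both are essentially formal, the first by unravelling definitions and the second by a standard duality in $\textup{DGCat}_\textup{cont}$.

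For $\CC[\Gamma] \# A\textup{-mod} \simeq A\textup{-mod}^\Gamma$, I would directly compare the data on both sides. A $(\CC[\Gamma] \# A)$-module structure on a dg vector space $M$ is an $A$-module structure together with a strict $\Gamma$-action satisfying $\gamma(am) = \gamma(a)\gamma(m)$, which is the same as giving, for each $\gamma \in \Gamma$, an $A$-linear isomorphism $M \risom \gamma^* M$ subject to the evident cocycle condition (where $\gamma^* M$ denotes $M$ with $A$-action twisted by $\gamma$). On the other hand, an object of $A\textup{-mod}^\Gamma = \lim_{B\Gamma} Act$ is precisely such a compatible family of isomorphisms; since $\Gamma$ is a discrete group acting strictly on $A$, no higher coherences intervene. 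The identification of hom complexes is similarly immediate.

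For $A\textup{-mod}^\Gamma \simeq A\textup{-mod}_\Gamma$, I would invoke the standard duality for presentable stable categories, namely that a colimit in $\textup{DGCat}_\textup{cont}$ is computed as the limit of the diagram of right adjoints (the dg analogue of $\textup{Pr}^L \simeq (\textup{Pr}^R)^{op}$). Applied to $Act^L: B\Gamma^{op} \to \textup{DGCat}_\textup{cont}$, this gives
$$A\textup{-mod}_\Gamma = \colim_{B\Gamma^{op}} Act^L \simeq \lim_{B\Gamma} (Act^L)^R.$$
Now, since each $Act(\gamma)$ is an equivalence, its left adjoint $Act^L(\gamma)$ is its inverse $Act(\gamma^{-1})$, and the right adjoint of $Act(\gamma^{-1})$ is again $Act(\gamma)$. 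Therefore $(Act^L)^R = Act$ as diagrams $B\Gamma \to \textup{DGCat}_\textup{cont}$, and the right-hand side is exactly $A\textup{-mod}^\Gamma$.

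The main subtle point is the proper $\infty$-categorical formulation of the $\textup{Pr}^L$ duality together with the identifications of adjoints, which for a strictly acting discrete group are mild since every arrow in the diagram is already invertible. As a more concrete alternative, one could construct the comparison functor directly and verify that under it the free $A$-module maps to $\CC[\Gamma] \# A$ on both sides, which is a compact generator with matching endomorphism algebra; the equivalence then follows from Proposition~\ref{compactgenerate}.
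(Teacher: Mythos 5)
Your treatment of the second equivalence matches the paper's, which cites Gaitsgory's result that limits in $\textup{DGCat}$ are computed from colimits in $\textup{DGCat}_\textup{cont}$ by passing to left adjoints --- exactly the $\textup{Pr}^L$/$\textup{Pr}^R$ duality you invoke, together with the observation that each $Act(\gamma)$ is an equivalence so the diagram of right adjoints recovers $Act$.

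For the first equivalence, however, the paper applies Barr--Beck to the induction/restriction adjunction between $\CC[\Gamma]\#A\textup{-mod}$ and $A\textup{-mod}$, whereas your primary argument tries to identify the two sides by unwinding definitions. The gap is the assertion that ``no higher coherences intervene.'' The limit $A\textup{-mod}^{\Gamma}=\lim_{B\Gamma}Act$ is taken over the simplicial set $B\Gamma$, which has nondegenerate simplices in every dimension when $\Gamma\neq 1$; an object of this limit therefore carries coherence data in all degrees (the isomorphisms $M\to\gamma^*M$, homotopies witnessing the cocycle condition, homotopies between those, and so on). Since the mapping objects of $A\textup{-mod}$ are chain complexes rather than sets, this tower does not collapse automatically, even for a strict action of a discrete group --- that the higher data are contractibly determined by the $1$-truncated data is precisely the thing that needs to be proved, not assumed. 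The clean way to establish it is monadicity: both $\CC[\Gamma]\#A\textup{-mod}$ and $A\textup{-mod}^{\Gamma}$ have conservative forgetful functors to $A\textup{-mod}$ admitting left adjoints, both are monadic over $A\textup{-mod}$ by the $\infty$-categorical Barr--Beck theorem, and in both cases the monad is $M\mapsto\bigoplus_{\gamma\in\Gamma}\gamma^*M\simeq(\CC[\Gamma]\#A)\otimes_A M$. Your fallback --- comparing compact generators and matching their endomorphism algebras via Proposition~\ref{compactgenerate} --- is a valid packaging of the same idea and is essentially the paper's route; the purely definitional unwinding, as stated, is not sufficient on its own.
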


\begin{proof}
	The first equivalence is by applying Barr-Beck theorem the induction/restriction adjoint functors   $ ind:\CC[\Gamma] \# A\textup{-mod} \rightleftarrows A\textup{-mod}: res$. For the second equivalence, one can calculate limit in $\textup{DGCat}$ from colimit in $ \textup{DGCat}_\textup{cont}$ by passing to left adjoints  \cite[Lemma 1.3.3]{Gai}.
\end{proof}

\subsection{Local systems and Koszul duality}

Let $X$ be a topological space, denote $\textup{Open}(X)$ the category of open subset of $X$. A \textit{sheaf} $F$ of (complexes of) vector space is by definition a functor $F: \textup{Open}(X) \to \Vect,$ from the category of open subset of $X$ to the dg category of vector spaces, such that for any $\{U_i\}_{i \in I}$ open cover of $X$, the natural map $F(X) \to \Tot(F(U^\bullet_X))$ is an (quasi-)isomorphism, where $F(U^n_X) := \prod_{(i_1,i_2,...,i_n) \in I^n} F(U_{i_1} \cap U_{i_2} \cap ... \cap U_{i_n} )$ form a cosimplicial object in \Vect, and $\Tot(\cdot)$ denote its totalization. All sheaves on $X$ forms a dg category, denote by $\Sh(X)$, it is complete and cocomplete. 

\begin{eg}
	The singular cochains $C^*_X: \textup{Open}(X) \to \Vect$, via $U \mapsto C^*(U,\CC)$ is a sheaf on $X$. It is isomorphic to the sheafification of constant presheaf $U \mapsto \CC$.
\end{eg}	
A \textit{local system} $L$ on $X$ is a sheaf, such that there is an open cover of $\{U_i\}_{i \in I}$ of $X$, satisfying $\forall i \in I$, $L|_{U_i} \simeq C^*_{U_i} \otimes V_i$, for some $V_i \in \Vect$. Denote $\Loc(X)$ the dg category of local system on $X$. Assume $X$ is path connected, denote $\Omega X$ the based loops space of $X$. Its chains $C_{-*}(\Omega X)$ is naturally a dg algebra via the compostition of loops $\Omega X \times \Omega X \to \Omega X$. Denote also $\Pi X$ the fundamental $\infty$-groupoid of $X$.  We have equivalences of dg categories:
\begin{equation}
\label{local systems as modules}
\Loc(X) \simeq \Map(\Pi X, \Vect) \simeq C_{-*}(\Omega X) \textup{-mod}
\end{equation}
These equivalence are compatible with the natural $t$-structure. Taking hearts, we get the more familar equivalence of abelian categories: $$\Loc^{\heartsuit}(X) \simeq \pi_1(X)\textup{-mod}^{\heartsuit} \simeq \CC[\pi_1(X)] \textup{-mod}^{\heartsuit}.$$

Denote $\Loc_c(X) \subset \Loc(X)$ the full subcategory consist of finite rank local systems (i.e those with $V_i \in \Vect_{\textup{fd}}$).

\begin{thm}[Koszul duality for topological spaces]
	Let $X$ be a path connected, simply connected topological space. There is equivalence of dg categories:
	$$C_{-*}(\Omega X) \textup{-mod}_\textup{fd} \simeq \Loc_c(X) \simeq C^*({X}) \textup{-perf}$$
\end{thm}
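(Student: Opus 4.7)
The middle equivalence $\Loc_c(X) \simeq C_{-*}(\Omega X)\textup{-mod}_{\text{fd}}$ is essentially free from the equivalence \eqref{local systems as modules}: that functor sends a local system to its stalk at the basepoint equipped with the monodromy action of $\Omega X$, and it is $t$-exact, so it identifies the finite-rank local systems with precisely those modules whose underlying complex is finite-dimensional. For this direction simple connectedness is not required.

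For the right-hand equivalence my plan is to apply Proposition~\ref{Karoubigenerate} with the generator $\underline{\CC}:=C^*_X$, the constant sheaf. First I would identify its endomorphism dg algebra. Under \eqref{local systems as modules}, $\underline{\CC}$ corresponds to the trivial module $\CC$ over $A:=C_{-*}(\Omega X)$, and so
$$\End_{\Loc(X)}(\underline{\CC})\simeq \RHom_{A}(\CC,\CC)\simeq C^*(B\Omega X) \simeq C^*(X),$$
where the last quasi-isomorphism uses that a pointed, path-connected, simply connected space is weakly equivalent to $B\Omega X$. Thus it suffices to show that $\underline{\CC}$ Karoubi-generates the idempotent complete dg category $\Loc_c(X)$.

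The substantive step is this Karoubi generation, which I expect to be the main obstacle and which is exactly where simple connectedness is used. Simple connectedness of $X$ forces $A$ to be a \emph{connective and connected} augmented dg algebra: $H_i(A)=0$ for $i>0$ and $H_0(A)=\CC$, with augmentation ideal $A_+$ concentrated in strictly negative homological degrees. For such an $A$, any finite-dimensional dg module $M$ admits a minimal semifree resolution $A\otimes V \twoheadrightarrow M$ with $V$ finite-dimensional, and the generating subspaces give an exhaustive finite filtration whose associated graded consists of shifts of the trivial module $\CC$. Equivalently, one can iteratively cone off the extreme non-vanishing (co)homology group by a map from a finite sum of shifts of $\underline{\CC}$, and the process terminates after finitely many steps because $M$ is bounded. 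Hence every finite-dimensional $A$-module lies in the thick subcategory generated by $\CC$. Simple connectedness really is needed: for $X=S^1$ the algebra $A=\CC[\ZZ]$ admits a continuous family of one-dimensional modules, only one of which is in the Karoubi closure of $\CC$.

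Combining the two points, Proposition~\ref{Karoubigenerate} yields
$$\Loc_c(X)\simeq \End_{\Loc(X)}(\underline{\CC})\textup{-perf}\simeq C^*(X)\textup{-perf},$$
completing the chain of equivalences. The only nontrivial input is the Karoubi generation by $\underline{\CC}$; the identification of the endomorphism algebra and the reduction from sheaves to modules are formal consequences of \eqref{local systems as modules}.
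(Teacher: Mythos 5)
Your proposal follows essentially the same route as the paper: both equivalences come from \eqref{local systems as modules} and Proposition~\ref{Karoubigenerate} with $C^*_X$ as the Karoubi generator, and you correctly identify $\End(C^*_X) \simeq C^*(X)$ (the paper obtains this directly as $R\Gamma(X, C^*_X)$, while you go through $\RHom_A(\CC,\CC) \simeq C^*(B\Omega X)$; both are fine). The difference is that the paper simply asserts Karoubi generation without proof, whereas you supply an argument, and you correctly flag that this is exactly where simple connectedness enters.

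One inaccuracy in your justification should be fixed. You claim that any finite-dimensional dg module $M$ admits a minimal semifree resolution $A \otimes V \twoheadrightarrow M$ with $V$ \emph{finite-dimensional}, which would mean $M$ is perfect over $A = C_{-*}(\Omega X)$. That is false in general: for $X = BT$ one has $A = C_{-*}(T) \simeq \Lambda^\bullet \frt$ (finite-dimensional), and the trivial module $\CC$ is finite-dimensional but not perfect, since $\Tor^A_*(\CC,\CC) \simeq H_*(BT)$ is infinite-dimensional. So the semifree-resolution phrasing does not give the needed finite filtration. Your alternative argument, however, is correct and is the one to keep: because $A$ is (co)connective with $H_0(A) = \CC$, a finite-dimensional module $M$ has a finite Postnikov filtration whose associated graded pieces are finite sums of shifts of the trivial module $\CC \simeq C^*_X$, so $M$ lies in the thick subcategory generated by $C^*_X$, and the induction terminates because $M$ is bounded. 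With that substitution the proposal is a complete and correct proof that fills in the step the paper leaves implicit.
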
	
\begin{proof}
	The first equivalence follows from (\ref{local systems as modules}) above. For the second, $\Loc_c(X)$ is Karoubi generated by $C^*_X$, hence by Propsition~\ref{Karoubigenerate}, we have $\Loc_c(X) \simeq \End(C^*_X)\textup{-perf} = C^*(X)\textup{-perf}$ .
\end{proof}	

\begin{cor}
	\label{Koszul duality}
Let $V$ be a vector space, $W$ a finite group acts linearly on $V$. There is equivalence of dg categories:
$$\CC[W]\# \SS_{V[1]} \textup{-mod}_\textup{fd} \simeq \CC[W]\# \SS_{V[-2]} \textup{-perf}$$
\end{cor}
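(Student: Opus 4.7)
The plan is to deduce this corollary directly from the topological Koszul duality theorem just proved, by realizing both dg algebras geometrically in a $W$-equivariant way and then taking invariants via the smash-product formalism of Section~2.2.

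Concretely, choose a $W$-stable $\ZZ$-lattice $\Lambda \subset V$ (which exists since $W$ is finite, by averaging any lattice over $W$), and set $X := K(\Lambda, 2)$, built as a simplicial abelian group so that the linear $W$-action on $\Lambda$ induces a \emph{strict}, basepoint-preserving $W$-action on $X$. For intuition, when $V = \CC^n$ and $W$ acts by signed permutations one may take $X = (\CC\PP^{\infty})^n$ with the evident action. Standard computations give $W$-equivariant identifications of dg algebras over $\CC$:
$$
C^*(X;\CC) \simeq \SS_{V[-2]}, \qquad C_{-*}(\Omega X;\CC) \simeq \SS_{V[1]},
$$
the first from $H^*(K(\ZZ,2);\CC) = \CC[x]$ with $|x|=2$ plus Künneth, and the second from $\Omega K(\Lambda,2) \simeq K(\Lambda,1)$, whose chains form the graded-commutative algebra on $V$ placed in degree $-1$. (One actually obtains $V^*$ rather than $V$; since $W$ is finite in characteristic zero, $V \simeq V^*$ as $W$-representations, so the distinction is harmless.)

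Now the previous theorem produces equivalences of dg categories
$$
\SS_{V[1]}\textup{-mod}_\textup{fd} \;\simeq\; \Loc_c(X) \;\simeq\; \SS_{V[-2]}\textup{-perf},
$$
each natural in $X$, hence intertwining the $W$-actions on all three sides. Passing to $W$-invariants in $\textup{DGCat}$ and applying the identification $(A\textup{-mod})^W \simeq \CC[W]\#A\textup{-mod}$ from Section~2.2 (which restricts to the subcategories of finite dimensional and of perfect modules, since these subcategories are preserved by the $W$-action and by induction/restriction) yields the claimed equivalence
$$
\CC[W]\#\SS_{V[1]}\textup{-mod}_\textup{fd} \;\simeq\; \CC[W]\#\SS_{V[-2]}\textup{-perf}.
$$

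The main potential obstacle is strictification: one needs the $W$-action on $X$, and hence on chains and cochains, to be strict so that the smash-product description of equivariant modules applies without further $\infty$-categorical refinement. The simplicial abelian group model $K(\Lambda,2)$ handles this cleanly, because any homomorphism of abelian groups induces a strict simplicial endomorphism. Once strictness is in place, every remaining step is either a direct invocation of the preceding theorem, a Künneth calculation, or the Section~2.2 formalism.
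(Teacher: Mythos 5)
Your argument is the same one the paper uses: realize $\SS_{V[1]}$ and $\SS_{V^*[-2]}$ as $C_{-*}(\Omega X)$ and $C^*(X)$ for a torus $X = BT$ (your $K(\Lambda,2)$ is precisely this, with the lattice made explicit), apply the topological Koszul duality theorem, and pass to $W$-invariants via the smash-product formalism of Section~2.2. Modeling $X$ by a simplicial abelian group to strictify the $W$-action is a nice clarification over the paper's terse ``choose a torus $T$.''

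The gap is in the two parenthetical claims you rely on. Averaging a lattice over $W$ does not in general produce a full-rank $W$-stable lattice: with $W=\ZZ/3$ acting on $V=\CC$ by a primitive cube root of unity $\zeta$, the module $\sum_w wL_0=\ZZ[\zeta]$ has $\ZZ$-rank $2$, so $\Lambda\otimes_\ZZ\CC\neq V$. A $W$-stable lattice with $\Lambda\otimes_\ZZ\CC\simeq V$ exists exactly when the representation is realizable over $\QQ$, which is a nontrivial hypothesis. It is likewise false that every complex representation of a finite group is self-dual --- the same $\zeta$-character is not isomorphic to its dual $\zeta^{-1}$. (The two failures travel together: $\QQ$-realizability implies $V\simeq\bar V\simeq V^*$.) So your argument, exactly as written, only proves the corollary for $\QQ$-realizable $V$. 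This is the same assumption the paper's own proof makes silently when it lets $W$ act on a torus $T$ with $\Lie T\simeq V$, and it is satisfied in every application (there $V$ is $\frz_I$ or $\frt$ carrying a Coxeter-group action, hence defined over $\QQ$). But for the corollary as literally stated --- arbitrary finite $W$, arbitrary complex $V$ --- the topological route needs this extra hypothesis; to remove it one would want a $GL(V)$-equivariant algebraic Koszul duality, and even then the natural output has $\CC[W]\#\SS_{V^*[-2]}\textup{-perf}$, with $V^*$ rather than $V$, on the right.
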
	
\begin{proof}
	Choose a torus $T$ and an isomorphism $Lie(T) \simeq V$, we have  $C_{-*}(\Omega BT) = C_{-*}(T) \simeq \SS_{V[1]}$ and $C^*(BT) \simeq \SS_{V^*[-2]}$. Apply Koszul duality to $X=BT$, we get $\SS_{V[1]}\textup{-mod}_\textup{fd} \simeq C_{-*}(\Omega BT)\textup{-mod}_\textup{fd} \simeq C^*(BT)\textup{-perf} \simeq \SS_{V^*[-2]}\textup{-perf}.$ We conclude by taking $W$ invariants on both sides. Note that since $W$ is finite, a $\CC[W]\# \SS_{V^*[-2]}$ module is perfect if and only if its restriction as $\SS_{V^*[-2]}$ module is perfect.
\end{proof}

\subsection{Compact generations of weakly constructible sheaves}

A \textit{stratified space} $(X,\mathcal{S})$ is a pair consist of a topological space $X$ and a topological stratification $\mathcal{S}$ of $X$. A sheaf $F$ on $X$ is \textit{weakly $\mathcal{S}$-constuctible} if for any $S \in \mathcal{S}$ a stratum, the sheaf $F|_S$ is a local system on $S$. A sheaf $F$ is \textit{$\mathcal{S}$-constructible} if $F|_S$ is a finite rank local system. Denote by $\Sh_\mathcal{S}(X)$ (resp. $\Sh_{\mathcal{S},c}(X)$) the dg category of weakly $\mathcal{S}$-constructible (resp. $\mathcal{S}$-constructible) sheaves on $X$. $\Sh_\mathcal{S}(X)$ is complete and cocomplete. 


\begin{lem}
	\label{adjoints exist}
Denote $j:S  \hookrightarrow X$ the inclusion of a stratum. Then the functor $j^!: \Sh_{\mathcal{S}}(X) \to \Loc(S)$ is continuous.
\end{lem}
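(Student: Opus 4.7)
The plan is to factor $j: S \hookrightarrow X$ as $S \xrightarrow{u} U \xrightarrow{\iota} X$, where $U := X \setminus (\overline{S} \setminus S)$ is an open subset of $X$ in which $S$ sits as a closed subset, $\iota$ is the open inclusion, and $u$ is the closed inclusion of $S$ as a closed stratum of $U$. Then $j^! = u^! \iota^!$; since $\iota$ is open we have $\iota^! = \iota^*$, which is a left adjoint and hence manifestly preserves colimits. It thus suffices to show that $u^!$ is continuous.

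Writing $v: U \setminus S \hookrightarrow U$ for the complementary open embedding, the standard recollement fiber sequence gives, functorially in $F \in \Sh_{\mathcal{S}|_U}(U)$,
\begin{equation*}
u^! F \longrightarrow u^* F \longrightarrow u^* v_* v^* F.
\end{equation*}
Both $u^*$ and $v^*$ are left adjoints, so they are continuous; hence everything reduces to showing that the "link" functor $u^* v_* v^*$ preserves colimits on $\Sh_{\mathcal{S}|_U}(U)$. This is the only place where weak constructibility is used essentially.

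To prove continuity of $u^* v_* v^*$, I would exploit the conical local triviality of $\mathcal{S}$: for any $s \in S$ there is a neighborhood of $s$ in $U$ homeomorphic to $D \times C(L_s)$, with $D$ an open disk in $S$, $C(L_s)$ the open cone on a compact stratified link $L_s$, and $(D \times C(L_s)) \cap S = D \times \{\textup{cone point}\}$. A weakly constructible sheaf on this neighborhood is, up to pullback along the projection to $D$, determined by its restriction to $L_s$, and a short computation identifies the stalk $(u^* v_* v^* F)_s$ with global sections of $F|_{L_s}$ on the compact CW complex $L_s$. Global sections on a compact space of a weakly constructible sheaf are computed by a finite limit indexed by the (finite) stratification of $L_s$, with each term being an evaluation of a local system on a stratum; both operations preserve colimits in the input $F$. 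Hence $u^* v_* v^*$ is continuous, and so $u^!$ is the fiber of a morphism of continuous functors, therefore continuous.

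The main obstacle is the local geometric input: the identification of $(u^* v_* v^* F)_s$ with $\Gamma(L_s, F|_{L_s})$ requires committing to a class of stratifications admitting conical local models (e.g.\ Whitney stratifications of complex algebraic varieties, as are used throughout the paper). Once such local models are in hand, the rest of the argument is a formal combination of recollement and the observation that global sections on compact stratified spaces factor through a finite diagram of local-system data; I would cite the relevant standard reference on stratified spaces to discharge this geometric input rather than reprove it.
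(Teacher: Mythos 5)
Your proposal takes a genuinely different route from the paper's. The paper checks continuity of $j^!$ locally on $S$, uses the conical local model to identify $j^!$ (near a point $s \in S$) with the shriek-pushforward $p_!$ along the projection $p: (U \cap S) \times C(Y,\mathcal{T}) \to (U\cap S)$, and then notes that $p_!$ has a right adjoint $p^!$, so it is continuous -- a very short argument. You instead factor $j$ as $\iota \circ u$ (open after closed), use the recollement fiber sequence $u^!F \to u^*F \to u^*v_*v^*F$, and reduce everything to continuity of the link functor $u^*v_*v^*$, which you then compute stalkwise via the conical model as $(u^*v_*v^*F)_s \simeq \Gamma(L_s, F|_{L_s})$. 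That reduction is correct and is a perfectly reasonable alternative decomposition.

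The gap is in the very last step. Your justification that $\Gamma(L_s,-)$ preserves colimits on weakly constructible sheaves -- "a finite limit indexed by the stratification of $L_s$, with each term an evaluation of a local system on a stratum" -- is too vague and not quite right. Sections of a local system over a non-contractible stratum $T$ are not a single "evaluation": they are $\Hom_{C_{-*}(\Omega T)}(\CC,-)$, which preserves colimits only when the unit $\CC$ is a perfect $C_{-*}(\Omega T)$-module, i.e.\ only under finiteness hypotheses on the strata that you have not invoked. Moreover, writing $\Gamma(L_s,-)$ as a finite limit over the strata already presupposes the same kind of recollement-plus-continuity bookkeeping you are in the middle of establishing, so as stated this threatens to be circular or to require an induction on dimension that you have not set up. The clean way to discharge this step is to note that since $L_s$ is compact, $\Gamma(L_s,-) = \Gamma_c(L_s,-) = a_!$ for $a: L_s \to \mathrm{pt}$, and $a_!$ has a right adjoint $a^!$ in the weakly constructible setting, hence is continuous. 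But that is precisely the paper's observation about $p_!$, applied one recollement step further along the chain. So your proof, once the last step is completed properly, works; it just takes a longer road to the same essential mechanism (a shriek-pushforward to a point admitting a right adjoint), whereas the paper reaches it immediately by contracting the cone.
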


\begin{proof}
	Let $ F_\alpha \in \Sh_{\mathcal{S}}(X)$, with $\colim F_\alpha = F$. We need to show that the natural map $\eta:  \colim j^!(F_\alpha)  \to j^!(F) $ is an isomorphism in $\Loc(S)$. Suffices to check this locally on $S$. For any $s \in S$, 	there is an open neighborhood $U$ of $s$ in $X$, a stratified space $(Y, \mathcal{T})$, and an isomorphism $(U, \mathcal{S}|_U) \simeq (U \cap S) \times  C(Y, \mathcal{T})$ (where $C(Y, \mathcal{T})$ denotes the open cone of $(Y, \mathcal{T})$),  such that the diagram of stratified space commutes:
	$$\xymatrix{      &       (U, \mathcal{S}|_U)  \ar[d]^\simeq \\
		(U \cap S)   \ar[ur]^j   \ar[r]^-{j_0}   &   (U \cap S) \times  C(Y, \mathcal{T})  }                      $$
	
	$j_0^!$ is isomorphic to $p_!$, where $p: (U \cap S) \times  C(Y, \mathcal{T}) \to (U \cap S)$ is the projection. Now $p_!$ admit a right adjoint $p^!$, hence $j_0^!$ preserve colimits. This implies $\eta|_U$ is an isomorphism, and hence $\eta $ is an isomorphism.
\end{proof}

\begin{prop}
	The cocomplete dg category $\Sh_{\mathcal{S}}(X)$ is compactly generated.
\end{prop}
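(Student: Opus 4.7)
The plan is to exhibit an explicit set of compact generators, obtained by $!$-extending compact generators of the local system categories on the individual strata. For each stratum $S \in \mathcal{S}$, fix a set $\mathcal{G}_S$ of compact generators of $\Loc(S)$; such a set exists because (\ref{local systems as modules}) identifies $\Loc(S)$ with $C_{-*}(\Omega S)\textup{-mod}$, which is compactly generated by the free module of rank one. The proposed compact generators of $\Sh_{\mathcal{S}}(X)$ are then
$$\mathcal{G} \;:=\; \bigl\{\,(j_S)_!\,\mathcal{L} \;:\; S \in \mathcal{S},\; \mathcal{L} \in \mathcal{G}_S\,\bigr\}.$$

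Compactness of each object in $\mathcal{G}$ follows immediately from the tools already assembled: the functor $(j_S)_!$ is continuous (being a left adjoint) and has right adjoint $(j_S)^!$, which is continuous by Lemma~\ref{adjoints exist}; hence by Proposition~\ref{preserve compact object} the functor $(j_S)_!$ preserves compact objects, so $(j_S)_!\,\mathcal{L} \in \Sh_{\mathcal{S}}(X)^c$ whenever $\mathcal{L} \in \Loc(S)^c$.

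For generation, suppose $F \in \Sh_{\mathcal{S}}(X)$ satisfies $\Map((j_S)_!\,\mathcal{L},\,F)=0$ for every $(S,\mathcal{L})$. Adjunction rewrites this as $\Map_{\Loc(S)}(\mathcal{L},\,(j_S)^!F)=0$ for all $\mathcal{L} \in \mathcal{G}_S$, so $(j_S)^!F=0$ for every stratum $S$. I would then conclude $F=0$ by induction on the number of strata: pick an open stratum $U \subset X$ (so $j_U^! = j_U^*$), giving $F|_U = 0$; the standard recollement triangle
$$(j_U)_!\,j_U^*\,F \longrightarrow F \longrightarrow (i_Z)_*\,i_Z^*\,F$$
for $Z := X \setminus U$ then identifies $F$ with $(i_Z)_*\,i_Z^*\,F$. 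Via $j_S = i_Z \circ j_{S,Z}$ and $i_Z^!(i_Z)_* \simeq \textup{id}$, the hypothesis $(j_S)^!F=0$ for strata $S \subset Z$ transfers to the analogous vanishing for $i_Z^*F$ on the stratified space $(Z,\mathcal{S}|_Z)$, which has strictly fewer strata; the inductive hypothesis yields $i_Z^*F=0$, whence $F=0$.

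The main technical point is the availability of the basic six-functor formalism inside the weakly constructible setting: continuity and weak-constructibility preservation for $(j_U)_!$ and $(i_Z)_*$, the identity $i_Z^!(i_Z)_* \simeq \textup{id}$, and the validity of the recollement triangle within $\Sh_{\mathcal{S}}(X)$ rather than in all sheaves. These should follow from the local-product structure of a stratified neighborhood used in the proof of Lemma~\ref{adjoints exist}, but the inductive reduction uses them in a form that must respect the entire stratification, and that bookkeeping is where I expect the only real work to lie.
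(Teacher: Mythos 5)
Your proposal is essentially the paper's argument: the paper also takes $(j_S)_! O_S$ for $O_S$ the compact generator of $\Loc(S)$ corresponding to the free module $C_{-*}(\Omega S)$, derives compactness from Lemma~\ref{adjoints exist} and Proposition~\ref{preserve compact object}, and then asserts generation without further justification. Your recollement--induction argument fills in the generation step the paper leaves implicit (and is correct under the implicit finiteness of the stratification, which is what Corollary~\ref{finite orbit stack} uses).
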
	
\begin{proof}
	For each $S \in \calS$, we have $\Loc(S) \simeq C_{-*}(\Omega S)$-mod. Let $O_S \in \Loc(S)$ be the local system corresponding to the free module $C_{-*}(\Omega S)$. Then $O_S$ is a compact object in $\Loc(S)$. By Lemma~\ref{adjoints exist}, $j_!$ has a continuous right adjoint $j^!$, therefore by Proposition~\ref{preserve compact object}, $j_!(O_S)$ is a compact object in $\Sh_{\mathcal{S}}(X)$. Now the objects $\{ j_!(O_S)\}_{S \in \calS}$ generated $\Sh_{\mathcal{S}}(X)$.
\end{proof}

\begin{rmk}
In fact, \cite{Nad16, GPS18} proved that for any Lagrangian subvariety $\Lambda$ of $T^*X$, the category $\Sh_\Lambda(X)$ of sheaves with singular support in $\Lambda$  is compactly generated. The previous propsition corresponds to the case when $\Lambda = \coprod_{S \in \mathcal{S}} T^*_S X.$
\end{rmk}

Let $G$ be a topological group acting on $X$, define $\Sh(X/G):= \Tot(\Sh(G^{\times \bullet} \times X))$ the dg category of $G$-equivairant sheaves on $X$. 
Assume $G$ preserve a topological stratification $\mathcal{S}$, 
put $\Sh_{\mathcal{S},(c)}(X/G):= \Tot(\Sh_{G^{\times \bullet} \times \mathcal{S},(c)}(G^{\times \bullet} \times X))$. In the case $G$ has finite many orbits on $X$, the set of orbits $\mathcal{O}$ forms a topological stratification, we have $\Sh_{\mathcal{O}}(X/G) = \Sh(X/G)$, put $\Sh_c(X/G):= \Sh_{\mathcal{O},c}(X/G)$.

\begin{cor}
	\label{finite orbit stack}
	Let $G$ be an algebraic group acts on an algebraic variety $X$ with finitely many orbits, then $\Sh(X/G)$ is compactly generated, and $\Sh(X/G)^{c} \subset \Sh_c(X/G)$.
\end{cor}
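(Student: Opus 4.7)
The plan is to promote the proof of Proposition~2.15 to the equivariant setting, using the orbit decomposition $\mathcal{O}$ as the $G$-invariant stratification.

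\medskip

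\textbf{Step 1.} First observe that because there are only finitely many orbits, $\mathcal{O}$ is a topological stratification of $X$; moreover any $G$-equivariant sheaf on $X$ is automatically weakly $\mathcal{O}$-constructible, since its restriction to an orbit is a $G$-equivariant sheaf on a transitive $G$-space, hence a local system. Therefore $\Sh(X/G)=\Sh_{\mathcal{O}}(X/G)$.

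\medskip

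\textbf{Step 2.} For each orbit $\mathcal{O}\subset X$, fix $x\in\mathcal{O}$ with stabilizer $H=H_{\mathcal{O}}$, so that the locally closed substack inclusion $j_{\mathcal{O}}:\mathcal{O}/G\hookrightarrow X/G$ identifies $\mathcal{O}/G\simeq BH$. By the equivalences of Section~2.3, $\Sh(\mathcal{O}/G)\simeq\Loc(BH)\simeq C_{-*}(H)\text{-mod}$, which is compactly generated by the free module; call the corresponding compact generator $O_{\mathcal{O}}\in\Sh(\mathcal{O}/G)$.

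\medskip

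\textbf{Step 3.} Next establish the equivariant version of Lemma~\ref{adjoints exist}: $j_{\mathcal{O}}^{!}:\Sh(X/G)\to\Sh(\mathcal{O}/G)$ is continuous. One can either run the conical-neighborhood argument of Lemma~\ref{adjoints exist} simplicial-level-wise in the cosimplicial diagram $\Sh(G^{\times\bullet}\times X)$ that computes $\Sh(X/G)$ (using that filtered colimits are level-wise and commute with totalization here since the stratum $G^{\times n}\times\mathcal{O}$ is locally closed in $G^{\times n}\times X$), or appeal directly to Lemma~\ref{adjoints exist} for the non-equivariant strata together with smooth descent. Once this is known, Proposition~\ref{preserve compact object} gives that the left adjoint $j_{\mathcal{O},!}$ sends compact objects to compact objects.

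\medskip

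\textbf{Step 4.} Assemble the compact generators. Each $j_{\mathcal{O},!}O_{\mathcal{O}}$ is a compact object of $\Sh(X/G)$ by Step~3. If $F\in\Sh(X/G)$ satisfies $\Map(j_{\mathcal{O},!}O_{\mathcal{O}},F)=0$ for every $\mathcal{O}$, then by adjunction $\Map(O_{\mathcal{O}},j_{\mathcal{O}}^{!}F)=0$, hence $j_{\mathcal{O}}^{!}F=0$ for every orbit; a finite induction on the orbit closure order using open-closed recollement (built from $j_{\mathcal{O},!}j_{\mathcal{O}}^{!}\to\id$ and $\id\to i_{*}i^{*}$ for the closed complement) forces $F=0$. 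This proves compact generation, and simultaneously that the generators may be taken to be the $j_{\mathcal{O},!}O_{\mathcal{O}}$.

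\medskip

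\textbf{Step 5.} For the inclusion $\Sh(X/G)^{c}\subset\Sh_{c}(X/G)$, let $F$ be compact. By Step~3, $j_{\mathcal{O}}^{!}$ is continuous with continuous right adjoint (namely $j_{\mathcal{O},*}$, by similar arguments), so by Proposition~\ref{preserve compact object} $j_{\mathcal{O}}^{!}F$ is compact in $\Sh(\mathcal{O}/G)$, i.e. a perfect $C_{-*}(H)$-module. Because $H$ is a complex algebraic group, it is homotopy equivalent to a compact Lie group, so $C_{-*}(H)$ has finite total dimension; a perfect module over a finite-dimensional dg algebra is itself finite-dimensional. Hence $j_{\mathcal{O}}^{!}F$ corresponds to a finite-rank local system, and $F\in\Sh_{c}(X/G)$.

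\medskip

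\textbf{Main obstacle.} The non-formal ingredient is Step~3: verifying the continuity of $j_{\mathcal{O}}^{!}$ on the equivariant category, since Lemma~\ref{adjoints exist} was stated for ordinary stratified topological spaces and must be propagated through the cosimplicial presentation of $\Sh(X/G)$. Everything else is a direct equivariant translation of the proof of Proposition~\ref{preserve compact object}, with the finite-type topology of algebraic stabilizers ensuring the promotion from compact to constructible in Step~5.
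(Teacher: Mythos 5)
Your Steps 1--4 reproduce the paper's argument for compact generation, just in more detail: pass to the orbit stratification, identify $\Sh(\mathcal{O}/G)\simeq C_{-*}(H)\text{-mod}$ compactly generated by the free module, use continuity of $j^!$ (the equivariant form of Lemma~\ref{adjoints exist}) together with Proposition~\ref{preserve compact object} to get compactness of $j_{\mathcal{O},!}O_{\mathcal{O}}$, and verify the generation by closure-order induction. This is essentially what the paper does, merely expanded.

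Step 5, however, departs from the paper and contains a genuine error. You claim $j_{\mathcal{O}}^{!}$ is continuous with continuous \emph{right} adjoint $j_{\mathcal{O},*}$, and invoke Proposition~\ref{preserve compact object} to conclude $j^!F$ is compact. But $j_*$ is right adjoint to $j^*$, not to $j^!$; these coincide only on \emph{open} strata. For a closed (or non-open locally closed) stratum $i$, the relevant adjoint chains are $i^*\dashv i_*=i_!\dashv i^!$, so $i^!$ sits at the end and has no obvious continuous right adjoint, and it is certainly not $i_*$. So the compactness of $j^!F$ does not follow the way you argue. (There is also a secondary mismatch: the paper's definition of $\Sh_c$ is phrased via $F|_S=j^*F$ being finite rank, so concluding from $j^!F$ requires an extra step.)

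The paper's actual argument for $\Sh(X/G)^c\subset\Sh_c(X/G)$ is simpler and sidesteps this completely: it observes that the compact generators $O_{S/G}$ are \emph{themselves} finite-rank local systems, because the stabilizer $G_s$ of a point in an algebraic $G$-variety is an algebraic group so $C_{-*}(G_s)$ is a finite-dimensional dg algebra; hence $j_!O_{S/G}\in\Sh_c(X/G)$. Since $\Sh_c(X/G)$ is thick and $\Sh(X/G)^c$ is the thick closure of the chosen generators, the inclusion follows. No claim about adjoints or about $j^!$ preserving compacts is needed. I'd suggest replacing your Step 5 with this direct observation about the generators.
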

\begin{proof} The first statement follows similar argument as in last Propsition. 
	Let $S$ be an orbit, $s \in S$, $G_s$ the stablizer at $s$. Let $O_{S/G} \in \Loc(S/G) \simeq \Loc(BG_s) \simeq C_{-*}(G_s)\text{-mod}$ be the object corresponding to the module $C_{-*}(G_s)$, then the objects $\{j_! O_{S/G}\}$ compactly generate $\Sh(X/G)$. Since $C_{-*}(G_s)$ is finite dimensional, $O_{S/G} \in \Sh_c(S/G)$ and therefore  $j_! O_{S/G} \in \Sh_c(X/G)$. 
\end{proof}

\section{Proof of Theorem~\ref{generalizedspringer}}

\begin{lem}
	Let $L$ be a connected reductive group such that there exist a cuspidal sheaf on $\mathcal{N}_L$. Assume that $L \subset P_i \subset G$, for some $P_i$ parabolic subgroups of $G$ with Levi $L$. Then all $P_i$'s are conjugate in $G$.
\end{lem}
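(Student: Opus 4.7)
The plan is to reduce the lemma to a combinatorial statement about the action of the relative Weyl group $W^L := N_G(L)/L$ on the chambers of the relative root system $\Phi_L := \Phi(G, Z_L^\circ)$, and then to invoke the cuspidality hypothesis.

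\medskip

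First I would recall the standard dictionary between parabolics containing $L$ and combinatorial data. Fix a maximal torus $T \subset L$. The parabolic subgroups of $G$ with Levi $L$ are in bijection with the chambers of $\Phi_L$ in $V := X_*(Z_L^\circ) \otimes \RR$: given $P = LU$, the unipotent radical $U$ selects a positive system on $\Phi_L$ and hence a chamber. A short Levi-decomposition argument (if $g P_1 g^{-1} = P_2$ then $g L g^{-1}$ is a Levi of $P_2$, hence conjugate to $L$ by an element of the unipotent radical of $P_2$, which can be absorbed into $g$) shows that two parabolics $P_1, P_2 \supset L$ are $G$-conjugate iff they are $N_G(L)$-conjugate. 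Consequently the set of $G$-conjugacy classes of parabolics with Levi $L$ is identified with the orbit set $W^L \backslash \{\text{chambers of } \Phi_L\}$.

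\medskip

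The lemma is therefore equivalent to showing that $W^L$ acts transitively on the chambers of $\Phi_L$, equivalently that $W^L$ contains the Coxeter group $W(\Phi_L)$ generated by the root reflections (which itself acts simply transitively). This is where the cuspidal hypothesis enters: by Lusztig's classification of cuspidal pairs \cite{Lu7}, the Levis $L$ admitting a cuspidal local system on $\cN_L$ are very restricted, and for each such $L$ one knows that $W^L$ is in fact a Coxeter group coinciding with $W(\Phi_L)$.

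\medskip

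The main obstacle is the containment $W^L \supseteq W(\Phi_L)$, since a priori $W^L$ acts on $\Phi_L$ only by root-system automorphisms and could be strictly smaller than the reflection subgroup. I would resolve this by appealing directly to Lusztig's classification: in each entry of the table of cuspidal data, one can exhibit lifts of the simple reflections $s_\alpha \in W(\Phi_L)$ to elements of $N_G(L)$. Once transitivity on chambers is established, the lemma follows immediately.
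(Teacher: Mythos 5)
Your proposal follows essentially the same route as the paper: translate parabolics with Levi $L$ into chambers of the relative root system on $V_L = X_*(Z_L^\circ)\otimes\RR$, reduce to transitivity of the relative Weyl group $W^L$ on chambers, and invoke the cuspidality hypothesis. The only difference is that where you propose a case-by-case verification from Lusztig's tables that $W^L$ contains the reflection group $W(\Phi_L)$, the paper instead cites the uniform statement of \cite[Theorem~9.2]{Lu7} (that for cuspidal $L$, the group $N_W(W_L)/W_L$ acting on $V_L$ is precisely the reflection group generated by the reflections in the walls $V_L\cap\alpha^\perp$), which gives chamber-transitivity directly and avoids any table-chasing.
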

\begin{proof}
	Let $T \subset L$ be a maximal torus, denote by $V_L \subset X_*(T) \otimes \RR$ be the subspace where roots of $L$ vanishes. Then a choice of such parabolic subgroup $P_i$ is a choice of chamber in $V_L \backslash \cup_{\alpha \in \Phi(G) \backslash \Phi(L)} \alpha^\perp$, where $\Phi(G)$ the set of roots of $G$ (The identification is given by chamber $C \mapsto < T,U_\alpha \; | \; \alpha(C) \geq 0>$).   Now by \cite[Theorem 9.2]{Lu7}, $N_W(W_L)/W_L$ is again Coxeter group acting on $V_L$ with hyperplanes $V_L \cap \alpha^\perp$, for $\alpha \in \Phi(G) \backslash \Phi(L)$. Hence $N_W(W_L)/W_L$ act transitivly on the set of chambers. Hence $N_W(W_L)$ acts transitively on the set of parabolic subgroups satisfying the condition required.
\end{proof}

Let $P \subset G$ be a parabolic subgroup with Levi factor $L$, we have the correspondence:
$$\xymatrix{\calN_L/L &  \calN_P/P \ar[l]_{q} \ar[r]^p & \N_G/G },$$
Denote the parabolic induction functor by $\Ind_P:= p_! q^*$ and restriction by $\Res_P:= p_*q^!$. For $I \subset I' \subset \Delta$, put $\Ind^{I'}_{I}:=\Ind_{P^{I'}_I}, $ and $\Res^{I'}_{I}:=\Res_{P^{I'}_I}$.

\begin{proof}[Proof of Theorem \ref{generalizedspringer}]
	For the first statement, recall an object  in $\{\textup{cuspidal data} \}  $ is a pair $(L,F)$, where $L$ is a Levi subgroup of $G$, and $F$ is a (irreducible perverse) cuspidal sheaf on $\cN_L$. The equivalence relation is given by $(L,F) \sim (L',F')$ if there is $g \in G,$ such that  $Ad_g(L)=L'$ and $Ad^*_g(F') \simeq F$. Denote by $C_I$ the isomorphism class of cuspidal sheaves on $\cN_{L_I}$. We need to show that the map $l:\coprod_{I \subset {\Delta}} C_I \to \{\textup{cuspidal data} \}/\sim$, defined via $(I, F \in C_I) \mapsto (L_I,F)$, is bijective. $l$ is surjective because any Levi subgroup of $G$ is conjugate to one of the $L_I$. To show injectivity, assume $(I,F \in C_I) \sim(I', F' \in C_{I'})$, i.e, there is $g \in G,$ such that $Ad_g(L_I)=L_{I'}$ and $Ad^*_g(F')=F$. Then $Ad_g(P^\Delta_I)$ and $P^\Delta_{I'}$ are two parabolic subgroup of $G$ containing $L_{I'}$ as Levi, then by above Lemma, they are conjugate. Hence $P^\Delta_I$ and $P^\Delta_{I'}$ are also conjugate, but both of them contains $B=P^\Delta_\emptyset$, so $P^\Delta_I = P^\Delta_{I'}$, and $I=I'$.  We have $\Ind_{P^\Delta_I}(F) \simeq \Ind_{P^\Delta_I}(F')$, so $F \otimes \CC[W^I] \simeq \Res_{P^\Delta_I}\Ind_{P^\Delta_I}(F) \simeq \Res_{P^\Delta_I}\Ind_{P^\Delta_I}(F') \simeq  F' \otimes \CC[W^I]$ (see e.g \cite{RR} Lemma 4.5), hence $F=F'$ in $C_I$.
	
	For the second statement, the argument follows very closely \cite{RR2}. For $F \in C_I$, Denote $Spr_{I,F}:=\Ind^\Delta_I F$ the corresponding generalized Springer sheaf, and $Spr := \oplus_{\{I \subset \Delta, F \in C_I\}} Spr_{I,F}$. By \cite{Lu7}, every irreducible perverse sheaf on $\N_G/G$ appear as direct summand of $Spr$. Since every object in $\Sh_c(\N/G)$ can be obtained from cones of maps between irreducible perverse sheaves, we conclude that $Spr$ Karoubi generates $\Sh_c(\N_G/G)$. We have:
    \begin{align*}
\Sh_c(\N/G)
&  \simeq \End(Spr)\textup{-perf} 
\qquad \text{(by Proposition~\ref{Karoubigenerate})} \\
& 	\simeq \bigoplus_{I,F} \End(Spr_{I,F})\textup{-perf}  \qquad (\Hom(Spr_{I,F},Spr_{I',F'})=0, \forall (I,F) \neq (I',F')  \text{ by \cite[Theorem 3.5]{RR}}) \\
& \simeq \bigoplus_{I,F} H^*(\End(Spr_{I,F}))\textup{-perf}  \qquad (\text{by Proposition~\ref{formalspringer}})
\\
&  \simeq \bigoplus_{I,F} \CC[W^I] \# \SS_{\frz^*_{I}[-2]}\textup{-perf}  \qquad (\text{by \cite[Proposition 4.6]{RR2}})
\\
&  \simeq \bigoplus_{I,F} \CC[W^I] \# \SS_{\frz_{I}[1]}\textup{-mod}_\textup{fd}  \qquad (\text{by Koszul duality, Corollary \ref{Koszul duality}})
\\
&  \simeq \bigoplus_{I \subset \Delta} (\CC[W^I] \# \SS_{\frz_{I}[1]}\textup{-mod}_\textup{fd})^{\oplus c_I}.
\end{align*}

Proof of the third statement is postponed to later this section.

\end{proof}	

We record the following Proposition for later use:
\begin{prop}
	\label{resind}
Let $J \subset J' \subset \Delta$. Under the identification in Theorem~\ref{generalizedspringer},
\begin{enumerate}
	\item 
the parabolic restriction $\Res^{J'}_J: \Sh_c(\N_{L_{J'}} /L_{J'}) \to \Sh_c(\N_{L_{J}}/L_J)$ is identified as 
$$\Res^{J'}_{J}: \bigoplus_{I' \subset J'} (\CC[W^{I'}_{J'}] \#   \SS_{\frz_{I'}[1]}  \textup{ -mod}_\textup{fd} )^{\oplus c_{I'}} \longrightarrow  \bigoplus_{I \subset J} (\CC[W^I_J] \#   \SS_{\frz_I[1]}  \textup{ -mod}_\textup{fd} )^{\oplus c_I}$$
where $\Res^{J'}_J$ is induced by the restriction along $W^{I'}_J \subset W^{I'}_{J'}$ for those summand labelled by $I' \subset J$, and is $0$ for summands $I' \not\subset J$. 
\item the parabolic induction $\Ind^{J'}_J: \Sh_c(\N_{L_{J}}/L_J) \to \Sh_c(\N_{L_{J'}}/L_{J'})$ is identified as 
$$\Ind^{J'}_{J}: \bigoplus_{I \subset J} (\CC[W^{I}_{J}] \#   \SS_{\frz_{I}[1]}  \textup{ -mod}_\textup{fd} )^{\oplus c_{I}} \longrightarrow  \bigoplus_{I' \subset J'} (\CC[W^{I'}_{J'}] \#   \SS_{\frz_{I'}[1]}  \textup{ -mod}_\textup{fd} )^{\oplus c_{I'}}$$
where $\Ind^{J'}_J$ is induced by the induction along $W^I_J \subset W^I_{J'}$.
\end{enumerate}
\end{prop}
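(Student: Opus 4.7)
The plan is to verify part (2) first, since parabolic induction acts transparently on the Springer generators of each block via transitivity, and then to deduce part (1) by combining adjunction with a Mackey--cuspidality argument that handles the vanishing on blocks with $I' \not\subset J$.

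For (2), the key input is the transitivity isomorphism $\Ind^{J'}_J \circ \Ind^J_I \simeq \Ind^{J'}_I$ for $I \subset J \subset J'$, which follows from base change applied to the Cartesian square $\calN_{P^{J'}_I}/P^{J'}_I \simeq (\calN_{P^J_I}/P^J_I) \times_{\calN_{L_J}/L_J} (\calN_{P^{J'}_J}/P^{J'}_J)$. For any $F \in C_I$ with $I \subset J$, this identifies $\Ind^{J'}_J(\Ind^J_I(F))$ with $\Ind^{J'}_I(F)$; in particular $\Ind^{J'}_J$ sends the Springer generator of the $(I,F)$-block of $\Sh_c(\calN_{L_J}/L_J)$ to the Springer generator of the $(I,F)$-block of $\Sh_c(\calN_{L_{J'}}/L_{J'})$. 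Passing to endomorphism dg algebras and using the identifications of \cite[Proposition 4.6]{RR2}, the induced map $\CC[W^I_J]\#\SS_{\frz_I^*[-2]} \to \CC[W^I_{J'}]\#\SS_{\frz_I^*[-2]}$ is the identity on the symmetric-algebra factor (since $\frz_I$ depends only on $I$) and the group-algebra inclusion on the Weyl factor (by naturality of Lusztig's construction of the relative Weyl group action with respect to enlargement of the ambient Levi). Extension of modules along this algebra inclusion is precisely the induction functor asserted in the proposition.

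For (1), since $\Res_P$ is right adjoint to $\Ind_P$ on the defining correspondence, $\Res^{J'}_J$ is the right adjoint of $\Ind^{J'}_J$. Under the identification of (2), the right adjoint of extension of scalars along an algebra inclusion is restriction of scalars, which delivers the claimed description on summands with $I' \subset J$. To handle the summands with $I' \not\subset J$, it suffices to show that $\Res^{J'}_J$ annihilates the Springer generator $\Ind^{J'}_{I'}(F')$. Using the Mackey filtration for $\Res^{J'}_J \circ \Ind^{J'}_{I'}$, indexed by double cosets $w \in W_J \backslash W_{J'} / W_{I'}$ with summand $\Ind^J_{L_J \cap wL_{I'}w^{-1}} \circ w_* \circ \Res^{I'}_{w^{-1}L_Jw \cap L_{I'}}$, cuspidality of $F'$ forces every summand with $w^{-1}L_Jw \cap L_{I'} \subsetneq L_{I'}$ to vanish. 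A surviving summand would require $wL_{I'}w^{-1} \subset L_J$ for some $w \in W_{J'}$, exhibiting $L_{I'}$ as $W_{J'}$-conjugate to a standard Levi $L_I \subset L_J$ with $I \subset J$; the uniqueness of representatives from the proof of Theorem~\ref{generalizedspringer} (via the first lemma of this section) then forces $I = I'$, contradicting $I' \not\subset J$.

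The main obstacle is this last step: setting up the Mackey filtration in the equivariant setting on the nilpotent cone (following Lusztig's analysis for character sheaves, or Mars--Springer for generalized Springer) and combining it with the uniqueness argument from Theorem~\ref{generalizedspringer} to conclude the vanishing. The remainder is essentially formal, given the block decomposition and the transitivity of parabolic induction already in hand.
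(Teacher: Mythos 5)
Your treatment of part (2) coincides with the paper's: both reduce to the transitivity $\Ind^{J'}_J\circ\Ind^J_I\simeq\Ind^{J'}_I$, which sends Springer generators to Springer generators and thereby identifies $\Ind^{J'}_J$ with extension of scalars block by block. For part (1) you also start from the paper's observation that $\Res^{J'}_J$ is the right adjoint, but you then split into cases, bringing in a Mackey filtration together with cuspidality of $F'$ to kill the $I'\not\subset J$ summands. That argument is correct (the filtration by $W_J\backslash W_{J'}/W_{I'}$ and the uniqueness of standard Levi representatives from the first lemma do give the vanishing), but it is not needed, and this is the genuine departure from the paper. Once you know (2) and the block \emph{orthogonality} on both source and target (which is part of Theorem~\ref{generalizedspringer}: $\Hom(Spr_{I,F},Spr_{I'',F''})=0$ for $(I,F)\neq(I'',F'')$), adjunction alone already forces the vanishing on the orphaned blocks: for $Y$ in the $(I',F')$-block with $I'\not\subset J$ and any $X\in\Sh_c(\N_{L_J}/L_J)$, one has $\Hom(X,\Res^{J'}_J Y)\simeq\Hom(\Ind^{J'}_J X,Y)=0$, since by (2) the object $\Ind^{J'}_J X$ has components only in blocks $(I,F)$ with $I\subset J$, all orthogonal to $(I',F')$; as $\Sh_c(\N_{L_J}/L_J)$ has enough (Springer) generators, this gives $\Res^{J'}_J Y=0$. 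So the paper's ``(1) follows from (2) by adjunction'' is a complete proof and the Mackey step is superfluous. What your longer route buys is independence from the target-side block decomposition (a geometric, self-contained vanishing); what the paper's route buys is that it is purely formal, given the decomposition already in hand, and avoids having to set up a Mackey formula in the equivariant nilpotent-cone setting.
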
	
\begin{proof}
	(2) follows from the transitivity of parabolic inductions: $\Ind^{J'}_{J}  \circ \Ind^{J}_{I}  \simeq \Ind^{J'}_I$. (1) follows from (2) by adjunction.
\end{proof}	

Let $\Sh^{cusp}_{(c)}(\N/G) \subset \Sh_{(c)}(\N/G)$ be the full subcategory consist of object $F$, such that $\oplus_{I \subsetneq \Delta}\Res^{\Delta}_I F =0$. And $\Sh^{eis}_c(\N/G) \subset \Sh_c(\N/G)$ be the full subcategory Karoubi generated by the Image of $\oplus_{I \subsetneq \Delta} \Ind^\Delta_I$. Under the identification in last section, $\Sh^{eis}_c(\N/G) = 
\bigoplus_{I \subsetneq \Delta} (\CC[W^I] \# \SS_{\frz_{I}[1]}\textup{-mod}_\textup{fd})^{\oplus c_I},$ and 
$\Sh^{cusp}_c(\N/G) =(\CC[W^\Delta] \# \SS_{\frz_{\Delta}[1]}\textup{-mod}_\textup{fd})^{\oplus c_\Delta}=\SS_{\frz_{\g}[1]}\textup{-mod}_\textup{fd}^{\oplus c_G}.$


\begin{prop}
We have $\Sh^{cusp}(\mathcal{N}/G) \simeq \SS_{\frz_{\g}[1]}\textup{-mod}^{\oplus c_G}$, which naturally contains above equivalence
$\Sh^{cusp}_c(\N/G) \simeq \SS_{\frz_{\g}[1]}\textup{-mod}_\textup{fd}^{\oplus c_G}$.
\end{prop}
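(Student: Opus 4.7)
The plan is to realize $\Sh^{cusp}(\mathcal{N}/G)$ as a block summand of $\Sh(\mathcal{N}/G)$ using the constructible decomposition of Theorem~\ref{generalizedspringer} and compact generation, then identify it via Proposition~\ref{compactgenerate}.

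First, $\Sh^{cusp}(\mathcal{N}/G) = \bigcap_{I \subsetneq \Delta} \ker(\Res^{\Delta}_I)$ is cocomplete: each $\Res^{\Delta}_I = p_* q^!$ is continuous since $p$ is proper (so $p_* = p_!$ is continuous) and $q$ is smooth (so $q^!$ is continuous). The constructible block decomposition $\Sh_c(\mathcal{N}/G) = \Sh^{cusp}_c \oplus \Sh^{eis}_c$ from Theorem~\ref{generalizedspringer} restricts to $\Sh(\mathcal{N}/G)^c \subseteq \Sh_c(\mathcal{N}/G)$ (by Corollary~\ref{finite orbit stack}, since compact objects are closed under direct summands), and Ind-completion yields
\[
\Sh(\mathcal{N}/G) \simeq \widetilde{\Sh^{cusp}} \oplus \widetilde{\Sh^{eis}}
\]
as a direct sum of cocomplete subcategories, where $\widetilde{\Sh^{cusp}} := \textup{Ind}(\Sh(\mathcal{N}/G)^c \cap \Sh^{cusp}_c)$.

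I would then verify $\Sh^{cusp}(\mathcal{N}/G) = \widetilde{\Sh^{cusp}}$: the inclusion $\supseteq$ is immediate. For $\subseteq$, given $F \in \Sh^{cusp}$ with decomposition $F = F^{cusp} \oplus F^{eis}$ in the above direct sum, continuity of $\Res^{\Delta}_I$ and cuspidality of the generators of $\widetilde{\Sh^{cusp}}$ give $\Res^{\Delta}_I(F^{eis}) = 0$ for all $I \subsetneq \Delta$. The shifted adjunction between $\Ind^{\Delta}_I$ and $\Res^{\Delta}_I$ then forces $\Hom(X, F^{eis}) = 0$ for every compact Eisenstein $X$ (lying in the Karoubi closure of the $\Ind^{\Delta}_I$-images), so $F^{eis} = 0$.

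To identify $\widetilde{\Sh^{cusp}}$ with $\SS_{\frz_{\g}[1]}\textup{-mod}^{\oplus c_G}$, I would take compact generators $K_i \in \Sh^{cusp}_c$ ($1 \leq i \leq c_G$) corresponding, under the constructible equivalence $\Sh^{cusp}_c \simeq \SS_{\frz_{\g}[1]}\textup{-mod}_\textup{fd}^{\oplus c_G}$ of Theorem~\ref{generalizedspringer}, to the free module $\SS_{\frz_{\g}[1]}$ in each summand. By the constructible equivalence, $\End(K_i) \simeq \SS_{\frz_{\g}[1]}$; in the model case $G = T$, $K_T$ is the local system $\pi_!(\CC)$ on $BT$ for $\pi : \ast \to BT$, with $\End(K_T) = C_{-*}(T) = \SS_{\frz_{T}[1]}$. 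Assuming compactness of each $K_i$ in $\Sh(\mathcal{N}/G)$ (hence in $\widetilde{\Sh^{cusp}}$), Proposition~\ref{compactgenerate} yields $\widetilde{\Sh^{cusp}} \simeq \SS_{\frz_{\g}[1]}\textup{-mod}^{\oplus c_G}$, restricting on the constructible level to the equivalence of Theorem~\ref{generalizedspringer}.

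The main obstacle will be verifying compactness of the $K_i$ in $\Sh(\mathcal{N}/G)$. The cuspidal sheaves $F_i$ themselves — which correspond, under Koszul duality (Corollary~\ref{Koszul duality}), to the \emph{trivial} $\SS_{\frz_{\g}[1]}$-modules rather than the free ones — are typically not compact when $\frz_{\g} \neq 0$, since their endomorphism algebra $\SS_{\frz_{\g}^*[-2]}$ is infinite-dimensional (for instance, $\CC_{BT}$ is not a compact object of $\Sh(BT) = \SS_{\frz_{T}[1]}\textup{-mod}$). The Koszul-dual objects $K_i$ corresponding to the free modules carry "regular central character" information and should be compact, but pinning this down geometrically — perhaps by identifying each $K_i$ as a direct summand of the cuspidal projection of a geometric compact generator $j_!(O_{S/G})$ from Corollary~\ref{finite orbit stack} — is the technical heart of the argument.
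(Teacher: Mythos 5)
There is a genuine gap, and you have correctly identified it yourself: you never actually verify compactness of the proposed generators $K_i$ corresponding to the free modules $\SS_{\frz_{\g}[1]}$, and your last paragraph explicitly punts on this as "the technical heart of the argument." Nothing in the constructible-level theory says that the free module $\SS_{\frz_{\g}[1]}$, viewed as an object of $\Sh_c^{cusp}(\mathcal{N}/G)$, is compact in the large category $\Sh(\mathcal{N}/G)$ — the equivalence $\Sh^{cusp}_c \simeq \SS_{\frz_{\g}[1]}\textup{-mod}_\textup{fd}^{\oplus c_G}$ is only an abstract equivalence of small dg categories, and being a compact generator is a property relative to the ambient cocomplete category, not something detectable at this level. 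Your suggestion of tracking $K_i$ through the cuspidal projection of $j_!(O_{S/G})$ would need to be carried out, and it is not clear this is straightforward: you would need to show the cuspidal block is a \emph{direct summand} of $\Sh(\mathcal{N}/G)$ in the cocomplete category (not just at the constructible level), and then control which summands of the geometric generators land where.

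The paper sidesteps all of this by a different reduction. It first observes (as you do, essentially) that $\Sh(\mathcal{N}/G)^{c,cusp} := \Sh(\mathcal{N}/G)^c \cap \Sh_c^{cusp}(\mathcal{N}/G)$ compactly generates $\Sh^{cusp}(\mathcal{N}/G)$, using the inclusion $\Sh(\mathcal{N}/G)^c \subset \Sh_c(\mathcal{N}/G)$ from Corollary~\ref{finite orbit stack} and adjunction against the Eisenstein block. But then, rather than trying to identify explicit compact generators when $\frz_{\g} \neq 0$, it \emph{first} treats the case of $G$ semisimple: there $\frz_{\g}=0$, so $\Sh_c^{cusp}(\mathcal{N}/G) \simeq \Vect_{\textup{fd}}^{\oplus c_G}$ and the compact generation question becomes trivial (everything in $\Vect_{\textup{fd}}$ is compact in $\Vect$). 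For general reductive $G$ it then writes $G = (Z^0_G \times G_{der})/H$ and uses descent, namely $\Sh^{cusp}(\mathcal{N}/G) \simeq (\Sh^{cusp}(\mathcal{N}/G_{der}) \otimes \Loc(BZ^0_G))^{\Loc(BH)}$, to import the semisimple answer and produce the $\SS_{\frz_{\g}[1]}$ factor from $\Loc(BZ^0_G/BH)$. This descent step along the isogeny is the idea your proposal is missing: it converts the hard problem of compact generation over a nontrivial formal algebra into the easy semisimple case plus a torus factor that is understood independently.
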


\begin{proof} 
	For $\mathscr{C} \subset \mathscr{D}$ full subcategory and $F \in \mathscr{D}$, denote $<\mathscr{C},F>=0$ if $\Hom_{\mathscr{D}}(E,F)=0$, for any $E \in \mathscr{C}$.
	By Corollary~\ref{finite orbit stack}, we have $\Sh(\N/G)^c \subset \Sh_c(\N/G)$.  This induces $\Sh(\N/G)^c= \Sh(\N/G)^{c,cusp} \oplus \Sh(\N/G)^{c,eis}$, where $\Sh(\N/G)^{c,cusp/eis}:=\Sh(\N/G)^c \cap \Sh_c^{cusp/eis}(\N/G)$.  We claim $\Sh(\N/G)^{c,cusp}$ compactly generated $\Sh^{cusp}(\N/G).$ Indeed, let $F \in \Sh^{cusp}(\N/G)$, such that $<\Sh(\N/G)^{c,cusp},F>=0$. We have $<\Sh(\N/G)^{c,eis},F>=0$ by adjunction, since $\Sh(\N/G)^{c,eis}$ is contained in $\Sh_c^{eis}(\N/G)$, which is Karoubi generated by the image of parabolic inductions. Hence we have $<\Sh(\N/G)^c,F>=0$, this implies $F=0$. This proves $\Sh(\N/G)^{c,cusp}$ compactly generated $\Sh^{cusp}(\N/G).$ In particular, $\Sh(\N/G)^{c,cusp}$ generates $\Sh_c^{cusp}(\N/G)$. Assume in addition that $G$ is semisimple, then $\Sh_c^{cusp}(\N/G)\simeq \Vect_{\textup{fd}}^{\oplus c_G}$, therefore $\Sh(\N/G)^{c,cusp}$ must equal $\Sh_c^{cusp}(\N/G) \simeq \Vect_{\textup{fd}}^{\oplus c_G}$, hence $\Sh(\N/G) =\Vect^{\oplus c_G}$. For general reductive $G$, write $G= (Z^0_G \times G_{der})/H$, where the derived group $G_{der}=[G,G]$  is semisimple, and $H = Z^0_G \cap G_{der}$ is a finite abelian group. We have by descent $\Sh^{cusp}(\N/G) \simeq (\Sh^{cusp}(\N/G_{der}) \otimes \Loc(BZ^0_G))^{\Loc(BH)} \simeq \bigoplus_{F \in C_{G_{der}}} (\Vect_F \otimes \Loc(BZ^0_G))^{\Loc(BH)} \simeq \bigoplus_{F \in C_{G_{der}}, H \text{ acts trivially on }F} \Sh(BZ^0_G/BH) \simeq \bigoplus_{F \in C_G} \SS_{\frz_{\g}[1]}\textup{-mod}.  $ 
\end{proof}	


\begin{proof}[Proof of Theorem~\ref{generalizedspringer} continued] Let $E_I \in (\mathbb{S}_{\frz_\frl{[1]}} \textup{ -perf})^{\oplus c_I}  \subset \Sh_c^{cusp}(\N_{L_I}/L_I)$  corresponding to $\mathbb{S}_{\frz_I{[1]}}^{\oplus c_I}$. Since $\Ind^{\Delta}_I$ has a continous right adjoint $\Res^{\Delta}_I$ (which is left adjoint to $\Ind^{-\Delta}_{-I}$), hence by Propsition~\ref{preserve compact object}, we have $ \oplus_{I \subset \Delta} \Ind^{\Delta}_I (E_I) $ is a compact generator of $\Sh(\N/G)$. By Propsition~\ref{resind}, $ \Ind^{\Delta}_I (E_I) \simeq (\CC[W^I]\#\mathbb{S}_{\frz_I{[1]}})^{\oplus c_I} \in (\CC[W^I]\#\mathbb{S}_{\frz_I{[1]}} \textup{-mod}_\textup{fd})^{\oplus c_I}.$   Hence $\Sh(\N/G) \simeq \End(\oplus_{I \subset \Delta} \Ind^{\Delta}_I (E_I) ) \textup{-mod} \simeq  \bigoplus_{I \subset \Delta} (\CC[W^I]\#\mathbb{S}_{\frz_I{[1]}} \textup{-mod})^{\oplus c_I}$
	
\end{proof}

\section{Derived Springer theory on  groups}

As we have seen in the proof of Theorem~\ref{generalizedspringer}, the Springer embedding $ \CC[W] \# \SS_{\frt^*[-2]} \textup{-mod}_{\textup{fd}} \hookrightarrow \Sh_c(\cN_G/G)$ relies on a purity result of \cite{RR2}. In this section, we combine the Springer embedding, Theorem~\ref{glue} and vanishing of certain higher homotopy groups (Corollary~\ref{homotopypushout}) to deduce the corresponding embedding for $\Ch(G)$ (Theorem~\ref{derivedspringer}).

Denote by $\Delta^1= \{ 0 \to 1  \}$ the category with two objects and one non-identity morphism between them.

\begin{prop}
Let  $H: (\Delta^1)^n \to \textup{Grp}$ be a colimit diagram of groups,  such that every arrow in $(\Delta^1)^n$ goes to an injective group homomorphism, and every $2$-dimensional face goes to a Cartesian square. Then the following are equivalent: 
\begin{enumerate}
 \item The induced diagram for classifying spaces $BH: (\Delta^1)^n \to \textup{Top}$ is a (homotopy) colimit diagram.
 \item Denote by $* \in (\Delta^1)^n$ the final object. The induced diagram $(\Delta^1)^n \to D^-(\ZZ[H_*])$, via $I \mapsto \ZZ[H_*/H_I]$ is a colimit diagram in the bounded above dg category of $\ZZ[H_*]$-modules.
 \item The total complex of the diagram in \textup{(2)}
 $$0 \rightarrow \bigoplus_{|I|=0}\ZZ[H_*/H_I] \rightarrow  \bigoplus_{|I|=1}\ZZ[H_*/H_I] \rightarrow ... \rightarrow \bigoplus_{|I|=n-1}\ZZ[H_*/H_I] \rightarrow \ZZ \rightarrow 0$$ 
 is acyclic, where $|I|:=i_1+i_2+...+i_n$, for $I=(i_1 i_2 ... i_n), i_k \in \{0,1\}$.
 \end{enumerate}
\end{prop}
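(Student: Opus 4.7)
The plan is to split the proof into two pieces: the purely formal equivalence $(2)\Leftrightarrow (3)$ in the stable $\infty$-category $D^-(\ZZ[H_*])$, and the more substantive $(1)\Leftrightarrow (2)$ obtained by pulling back the cube of classifying spaces along the universal $H_*$-torsor and then applying singular chains.

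For $(2)\Leftrightarrow (3)$, I will use a standard fact: in any cocomplete stable $\infty$-category $\mathcal{C}$, a cube $F\colon (\Delta^1)^n\to\mathcal{C}$ is a colimit diagram if and only if its augmented total complex
\[
\bigoplus_{|I|=0}F(I)\to\bigoplus_{|I|=1}F(I)\to\cdots\to\bigoplus_{|I|=n-1}F(I)\to F(*)
\]
(with alternating-sign differentials built from the cube) is acyclic; this is the usual cubical bar-complex expression for the canonical map $\colim_{I\neq *}F\to F(*)$ and requires no assumption on $F$. Applied to the cube $I\mapsto\ZZ[H_*/H_I]$ in $D^-(\ZZ[H_*])$, this identifies $(2)$ with $(3)$ directly.

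For $(1)\Leftrightarrow(2)$ I will pass through an equivariant reformulation. Let $EH_*\to BH_*$ be the universal $H_*$-torsor. Homotopy pullback along this map is an equivalence of $\infty$-categories $\textup{Spaces}_{/BH_*}\simeq H_*\textup{-Spaces}$, and in particular it preserves and reflects homotopy colimits (descent / universality of colimits in $\infty$-groupoids). Since each $H_I\hookrightarrow H_*$ is injective, the pulled-back cube $\widetilde{BH}_I:=EH_*\times_{BH_*}BH_I$ is $H_*$-equivariantly equivalent to the discrete coset $H_*$-set $H_*/H_I$. Hence $(1)$ is equivalent to the assertion that the cube $I\mapsto H_*/H_I$ is a homotopy colimit diagram in $H_*\textup{-Spaces}$. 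I will then apply the colimit-preserving singular chains functor $C_*\colon H_*\textup{-Spaces}\to D^{\leq 0}(\ZZ[H_*])$ (a left adjoint, via the equivariant Dold--Kan), which sends a discrete $H_*$-set $S$ to $\ZZ[S]$ in degree $0$; this immediately yields $(1)\Rightarrow (2)$. For the converse $(2)\Rightarrow (1)$, I will exploit the combinatorial hypothesis: a straightforward induction from the 2-face Cartesian condition shows $H_I\cap H_J=H_{I\wedge J}$ inside $H_*$ for all $I,J$, and this rigidly controls the cubical bar model for $\colim_{I\neq *}H_*/H_I$ in $H_*\textup{-Spaces}$, matching it term-by-term with the complex in $(3)$. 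Acyclicity of $(3)$ then forces this homotopy colimit to be the discrete set $H_*/H_*$, which back-translates to $(1)$ via the descent equivalence.

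The main obstacle is precisely the $(2)\Rightarrow(1)$ step: a priori, chains with $\ZZ$ coefficients lose homotopy information above $\pi_0$ (there exist nontrivial acyclic spaces), so acyclicity of the total complex does not in general force a homotopy equivalence. The rescue is the Cartesian/injection hypothesis on the cube of groups, which ensures the bar-type model of the punctured cube of cosets has only the cells visible in $(3)$; on the resulting $0$-dimensional $H_*$-spaces the chains functor is faithful, so a chain-level equivalence to $H_*/H_*$ is a genuine equivariant homotopy equivalence.
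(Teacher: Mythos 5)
Your reduction of (2)$\Leftrightarrow$(3) via the cubical total complex matches the paper, and your $(1)\Rightarrow(2)$ is a clean, valid alternative to the paper's route (the paper goes through Yoneda on $\lim_I R\Hom_{\ZZ[H_I]}(\ZZ,M)$; you instead pass through descent along $EH_*\to BH_*$ and the colimit-preserving chains functor, which amounts to the same computation by adjunction).

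The gap is in $(2)\Rightarrow(1)$. After descent, what you must show is that the homotopy colimit $\tilde X:=\colim_{I\neq *}H_*/H_I$ in $H_*$-spaces is contractible. You assert that the Cartesian condition makes this colimit ``$0$-dimensional'' so that chains are faithful on it; that is not the case. The punctured cube has nerve of dimension $n-1$, and $\tilde X$ is an $(n-1)$-dimensional $H_*$-CW complex whose cellular chain complex is exactly (3). For $n\geq 3$, acyclicity of an $(n-1)$-complex does not imply contractibility --- precisely the phenomenon (acyclic but non-simply-connected spaces) you flag as the obstacle and then do not overcome. The Cartesian/injectivity hypotheses identify the cells and the chain complex, but they do not by themselves kill $\pi_1(\tilde X)$.

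What closes the gap --- and what the paper does, following \cite{McD} --- is to first compute $\pi_1$ of $X:=\colim_{I\neq*}BH_I$ by (the cubical version of) van~Kampen, using the hypothesis that $H$ is a \emph{colimit diagram of groups} together with the intersection control $BH_I\cap BH_{I'}=BH_{I\cap I'}$ coming from the Cartesian faces. This yields $\pi_1(X)\xrightarrow{\sim}H_*$, so $\tilde X$ is the universal cover of $X$ and in particular simply connected. Only then does acyclicity of (3) (equivalently, of $\tilde H_*(\tilde X)$) imply $\tilde X\simeq *$ by Hurewicz/Whitehead, giving $X\simeq BH_*$. Note you never actually invoke the colimit-of-groups hypothesis in your $(2)\Rightarrow(1)$ argument, which is a symptom of the missing step: without it, the conclusion is simply false (take any cube where the top group $H_*$ strictly contains the group generated by the proper faces). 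Adding the van~Kampen $\pi_1$ computation, exactly as in the paper, repairs your argument and then your descent picture finishes cleanly.
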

\begin{proof} 

 Equivalence between (2) and (3): $\colim_{I \in (\Delta^1)^n \backslash *} \ZZ[H_*/H_I]$ is computed by the complex:
  $$0 \rightarrow \bigoplus_{|I|=0}\ZZ[H_*/H_I] \rightarrow  \bigoplus_{|I|=1}\ZZ[H_*/H_I] \rightarrow ... \rightarrow \bigoplus_{|I|=n-1}\ZZ[H_*/H_I] \rightarrow 0.$$ 
 
 Equivalence between (1) and (2): The argument follows closely \cite[Lemma 4]{McD}. let $f:X:=\colim_{I \in (\Delta^1)^n \backslash *} BH_I \to BH_*$ be the induced map. For $I=(i_1i_2...i_n),I'=(i'_1i'_2...i'_n) \in (\Delta^1)^n$, denote by $I \cap I':=(\min\{i_1,i'_1\} \min\{i_2,i'_2\} ... \min\{i_n,i'_n\})$. Then $X=\cup_{I \in (\Delta^1)^n \backslash *} BH_I$, and $BH_I \cap BH_{I'}=BH_{I \cap I'}$. Hence by cube version of Van Kampen Theorem,  $f_*:\pi_1(X) \to \pi_1(BH_*)$ is an isomorphism. To show (1), it is equivalent to show that for any $M$ local system of abelian groups on $BH_*$, the induced map $f^*:R\Gamma(BH_*,M) \to R\Gamma(X, f^*M) $ is an isomorphism. Note that $H_I \to H_*$ is injective, hence $\ZZ[H_I] \to \ZZ[H_*]$ is flat. Then $f^*$ can be identified as:  $f^*:RHom_{\ZZ[H_*]}(\ZZ,M) \to \lim RHom_{\ZZ[H_I]}(\ZZ,M) \simeq \lim RHom_{\ZZ[H_*]}(\ZZ[H_*/H_I],M) \simeq RHom_{\ZZ[H_*]}( \colim \;\ZZ[H_*/H_I],M)$.
 Then by Yoneda Lemma,  $f^*$ is an isomorphism for any $\ZZ[H_*]$-module $M$ if and only if $\colim \;\ZZ[H_*/H_I] \to \ZZ$ is an isomorphism.
\end{proof}

Let $\WW$ be a reflection group with the set of walls $\frH$. For $I \subset \frH$, denote by $\WW_I \subset \WW$ the subgroup generated by reflections in $I$. 

\begin{cor}
	\label{homotopypushout}
	Let $\WW$ acting on $V$ be an reflection group which is infinite, essential and irreducible, let $S$ be the set of walls of an alcove. View $P(S)$ the power set of $S$ as a category via inclusion of subsets. Then the functor $B\WW_I: P(S) \rightarrow \textup{Top}$, where $I \mapsto B\WW_I$ is a (homotopy) colimit diagram.
\end{cor}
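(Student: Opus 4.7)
The plan is to invoke the preceding proposition with $n = |S|$ and $H_I = \WW_I$, noting that $H_* = \WW_S = \WW$ since the walls of the fundamental alcove generate the full reflection group. This reduces the corollary to verifying its condition (3): the exactness of the augmented $\ZZ[\WW]$-module complex built from the cosets $\ZZ[\WW/\WW_I]$.

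The two hypotheses of the proposition are standard Coxeter-theoretic facts. Injectivity of $\WW_I \hookrightarrow \WW_J$ for $I \subset J$ holds because standard parabolic subgroups of a Coxeter group are themselves Coxeter with their canonical presentation, so no extra relations are imposed on them. The Cartesian-square condition is the identity $\WW_I \cap \WW_J = \WW_{I \cap J}$, which is also classical (e.g.\ Bourbaki, Groupes et algèbres de Lie, Ch.~IV).

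For condition (3), I use the contractibility of $V$. Since $\WW$ is essential, irreducible, and infinite, it is an affine Weyl group acting on $V \cong \RR^{|S|-1}$, and the fundamental alcove $A$ is a simplex with exactly $|S|$ walls. The $\WW$-translates of the open faces of $A$ assemble into a regular $\WW$-CW decomposition of $V$. Faces of $A$ are indexed by subsets $I \subset S$ (the walls of $A$ containing the face): the face indexed by $I$ has codimension $|I|$, and, since $\bar A$ is a strict fundamental domain, its pointwise stabilizer in $\WW$ is exactly $\WW_I$. The augmented cellular chain complex of $V$ with constant $\ZZ$ coefficients thus takes the form
\[
0 \to \bigoplus_{|I|=0} \ZZ[\WW/\WW_I] \to \bigoplus_{|I|=1} \ZZ[\WW/\WW_I] \to \cdots \to \bigoplus_{|I|=|S|-1} \ZZ[\WW/\WW_I] \to \ZZ \to 0,
\]
with the term indexed by $I$ placed in homological degree $|S|-1-|I|$; the pointwise triviality of the $\WW_I$-action on its fixed face keeps the orientation representation trivial, so no sign twist appears. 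Since $V$ is contractible, this complex is acyclic, and so condition (3) of the proposition holds, yielding the corollary.

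The main obstacle I anticipate is bookkeeping rather than genuine difficulty: verifying that no reflection hyperplane other than the walls of $A$ contains a given face of $A$ (so that the pointwise stabilizer is really $\WW_I$ and not larger), and carefully matching the combinatorial indexing of the preceding proposition — where the final object $I = S$ corresponds to the augmentation term $\ZZ[\WW/\WW_S] = \ZZ$ — with the geometric picture, in which $I = S$ has no counterpart as a face (the intersection of all $|S|$ walls of an $(|S|-1)$-simplex is empty) precisely because it is the augmentation rather than a cell.
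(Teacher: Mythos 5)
Your proof is correct and follows essentially the same route as the paper: reduce to condition~(3) of the preceding proposition and then identify the complex there with the (augmented) cellular chain complex of $V$ built from the $\WW$-translates of the faces of the fundamental alcove, which is acyclic by contractibility of $V$. One small point: you cite only two of the three hypotheses of the proposition (injectivity and the Cartesian-square condition); the proposition also requires the diagram $I\mapsto \WW_I$ to be a \emph{colimit} diagram in $\textup{Grp}$, which the paper verifies explicitly by noting that the Coxeter presentation of $\WW$ is generated by the simple reflections (i.e.\ the rank-one $\WW_I$) with defining relations already holding in rank-two parabolics $\WW_I$ — a standard fact you should include for completeness. Otherwise the arguments agree, and your observation that the stabilizer must fix the face \emph{pointwise} (so the $\WW_I$-action on orientations is trivial and no sign twist arises) makes explicit a detail the paper leaves implicit.
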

\begin{proof} Fix an isomorphism $S \simeq \{1,2,...,n\}$, for $n=|S|$, then we have $P(S) \simeq (\Delta^1)^n$. Since $\WW$ is can be generated by elements in $\WW_{I }$, for $ I \subset S$; and relations of $\WW$ comes from some $\WW_I$, we see that $I \mapsto \WW_I$ is colimit diagram of groups, so it is suffices check (3) above is satisfied. This is a standard argument (which we learnt from Roman Bezrukavnikov), we include a brief proof for reader's convenience. The hyperplanes in $\frH$ gives a simplicial decomposition of $V$. Let $C$ be an alcove, then the closure $\overline{C}$ is a fundermental domain of the $\WW$ action on $V$. The stablizer of the $(n-1-|I|)$-simplex $\overline{C} \cap \bigcap_{\HH \in I} \HH$ is $\WW_I$. Hence the set of $i$-simplex of $V$ can be identified with $\coprod_{|I|=n-i-1} \WW/\WW_I $, and the complex 
	 $$0 \rightarrow \bigoplus_{|I|=0}\ZZ[\WW/\WW_I] \rightarrow  \bigoplus_{|I|=1}\ZZ[\WW/\WW_I] \rightarrow ... \rightarrow \bigoplus_{|I|=n-1}\ZZ[\WW/\WW_I] \rightarrow 0.$$ 
	is the simplicial chain complex computing the homology of $V$, which is $\ZZ$ since $V$ is contractible.
\end{proof}
Assume now $G$ is simple and simply-connected. Denote by $\widetilde{W}$ the affine Weyl group. 

\begin{thm}
	\label{derivedspringer}
	 There are fully-faithful embeddings of dg categories as direct summand
	$$\xymatrix{\CC[\widetilde{W}] \# \SS_{\frt^*[-2]} \textup{ -mod}_{\textup{fd}} 
	\ar@{^{(}->}[r]^-{\oplus} &  \Ch_c(G)	} $$
$$\xymatrix{\CC[\widetilde{W}] \# \SS_{\frt[1]} \textup{ -mod}
	\ar@{^{(}->}[r]^-{\oplus} &  \Ch(G)} $$
\end{thm}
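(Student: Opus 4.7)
The plan is to combine the gluing presentation $\Ch(G)\simeq \lim_{\{I\subsetneq \widetilde{\Delta}\}^{\op}}\Sh(\calN_{L_I}/L_I)$ of Theorem~\ref{glue}, the block decomposition of Theorem~\ref{generalizedspringer} applied to each Levi $L_I$, the explicit description of parabolic restriction in Proposition~\ref{resind}, and the homotopy pushout for affine reflection groups in Corollary~\ref{homotopypushout}. For each $I\subsetneq \widetilde{\Delta}$ I would single out the \emph{principal Springer block} in $\Sh(\calN_{L_I}/L_I)$---the summand corresponding to the cuspidal datum $J=\emptyset$ on the torus---which is canonically identified with $\CC[\widetilde{W}_I]\#\SS_{\frt[1]}\text{-mod}$ and is equipped with the block projection as a retraction. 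By Proposition~\ref{resind}(1), for $I\subset I'$ the parabolic restriction $\Res^{I'}_I$ carries the principal block of $I'$ into that of $I$ via restriction of scalars along $\CC[\widetilde{W}_I]\#\SS_{\frt[1]}\hookrightarrow \CC[\widetilde{W}_{I'}]\#\SS_{\frt[1]}$, and this is compatible with the retractions. Hence the principal blocks form a subdiagram of the one in Theorem~\ref{glue} with compatible splittings, and taking limits yields a direct summand embedding
$$\lim_{I^{\op}}\bigl(\CC[\widetilde{W}_I]\#\SS_{\frt[1]}\text{-mod}\bigr)\;\hookrightarrow\;\Ch(G).$$

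To identify the left-hand limit, I would apply the equivalence $\CC[\Gamma]\#A\text{-mod}\simeq \Map(B\Gamma,A\text{-mod})$ from Section~\ref{group action on category} to rewrite the diagram as $I\mapsto \Map(B\widetilde{W}_I,\SS_{\frt[1]}\text{-mod})$ with transition functors given by pullback along $B\widetilde{W}_I\to B\widetilde{W}_{I'}$. Since $\Map(-,\SS_{\frt[1]}\text{-mod})$ converts colimits of spaces to limits of dg categories,
$$\lim_{I^{\op}}\Map(B\widetilde{W}_I,\SS_{\frt[1]}\text{-mod})\;\simeq\;\Map\bigl(\colim_I B\widetilde{W}_I,\;\SS_{\frt[1]}\text{-mod}\bigr).$$
Because $G$ is simple and simply connected, $\widetilde{W}$ acts on $X_*(T)\otimes \RR$ as an infinite, essential, irreducible reflection group whose fundamental alcove has walls indexed by $\widetilde{\Delta}$; Corollary~\ref{homotopypushout} therefore yields $\colim_I B\widetilde{W}_I\simeq B\widetilde{W}$. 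Consequently the right-hand side becomes $\Map(B\widetilde{W},\SS_{\frt[1]}\text{-mod})\simeq \CC[\widetilde{W}]\#\SS_{\frt[1]}\text{-mod}$, establishing the second embedding.

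The first embedding is obtained by restricting both sides of the construction to compact objects---compact objects of $\Ch(G)$ lie in $\Ch_c(G)$ by Corollary~\ref{finite orbit stack} together with the limit presentation---and then invoking Koszul duality (Corollary~\ref{Koszul duality}) levelwise, which is legitimate because each $\widetilde{W}_I$ for $I\subsetneq \widetilde{\Delta}$ is a \emph{finite} parabolic subgroup of $\widetilde{W}$. The hardest step I anticipate is arranging for the levelwise summand splittings to descend to a genuine direct summand of the limit: this requires strict naturality in $I$ of the projection onto the principal block, which is precisely the block-by-block content of Proposition~\ref{resind}. A secondary subtlety is that Corollary~\ref{Koszul duality} does not apply to the infinite group $\widetilde{W}$ directly, so the $\SS_{\frt^*[-2]}$-side must be reached through the limit of its finite-parabolic analogues rather than in a single step.
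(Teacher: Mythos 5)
Your argument for the second embedding is essentially the paper's: after Theorem~\ref{glue} and the block decomposition of Theorem~\ref{generalizedspringer}, one isolates the principal (torus-cuspidal) block at each $I\subsetneq\widetilde{\Delta}$, checks stability under parabolic restriction via Proposition~\ref{resind}, and then identifies the limit using Corollary~\ref{homotopypushout}. Your mapping-space packaging $\CC[\Gamma]\#A\text{-mod}\simeq \lim_{B\Gamma}Act$ together with ``maps out of a colimit of spaces is a limit of categories'' is precisely the invariants/coinvariants calculus of Section~\ref{group action on category} that the paper applies (the paper passes through $\colim_{B\widetilde{W}^{\op}}Act^L\simeq\colim_I\colim_{B\widetilde{W}_I^{\op}}Act^L$ and back to limits, citing \cite[Lemma 1.3.3]{Gai}); these are the same argument in different clothing.

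Where you genuinely diverge is the first (constructible) embedding, and I think your route needs repair. You propose to take compact objects of $\Ch(G)$ and its principal summand and apply Koszul duality ``levelwise.'' Two problems. First, compact objects of $\CC[\widetilde{W}]\#\SS_{\frt[1]}\text{-mod}$ are the perfect modules, not the finite-dimensional ones; for the infinite group $\widetilde{W}$ the free module is perfect but not finite-dimensional, so $\text{-perf}$ and $\text{-mod}_\textup{fd}$ are genuinely different, and Corollary~\ref{Koszul duality} is a statement about $\text{-mod}_\textup{fd}$ on the $\SS_{V[1]}$ side, not about compact objects. Second, the claim ``compact objects of $\Ch(G)$ lie in $\Ch_c(G)$'' does not follow directly from Corollary~\ref{finite orbit stack}: that corollary concerns a single finite-orbit stack $\N_L/L$, whereas $\Ch(G)$ is a limit over $\{I\subsetneq\widetilde{\Delta}\}^{\op}$, and compact objects in a limit of compactly generated categories are not automatically detected levelwise. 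The paper avoids both pitfalls by simply running the chain of equivalences with the small categories $\Sh_c(\N_{L_I}/L_I)$ in place of $\Sh(\N_{L_I}/L_I)$: at each level the principal block is $\CC[W_I]\#\SS_{\frt[1]}\text{-mod}_\textup{fd}$, which by Corollary~\ref{Koszul duality} (available because each $W_I$ is finite and $\CC[W_I]\#\SS_{\frt[1]}$ is finite-dimensional) is $\CC[W_I]\#\SS_{\frt^*[-2]}\text{-perf}$, and the limit is then identified as before via Corollary~\ref{homotopypushout}. So you should drop the compact-object detour and instead restrict the diagram to constructible sheaves from the outset, exactly as you did on the ambient level, before taking the limit.
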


\begin{proof}
We show the second embeding, and the first one follows from the same argument. By Theorem~\ref{generalizedspringer} (proved in next section), we have $\Sh(\N_{L_I}/L_I) \simeq  (\CC[W_I] \# \SS_{\frt[1]}  \textup{-mod})  \oplus \Sh(\N_{L_I}/L_I)^{\textup{np}}$, where  $\Sh(\N_{L_I}/L_I)^{\textup{np}}$ are those blocks whose Levi subgroup of the cusipdal data is not a maximal torus. Note that this direct sum decomposition is stable under parabolic restriction $\Sh(\N_{L_I}/L_I) \to \Sh(\N_{L_I'}/L_I')$. Passing to the limit, by Theorem~\ref{glue}, we have $\Ch(G)= \lim_{\{I \subsetneq \widetilde{\Delta} \}^{op}} \Sh(\N_{L_I}/L_I) = \lim (\CC[W_I] \# \SS_{\frt[1]}  \textup{-mod})  \oplus \lim \Sh(\N_{L_I}/L_I)^{\textup{np}}$.


Now in the notation of Section~\ref{group action on category}, take $A = \SS_{\frt[1]}$, and $\Gamma= \widetilde{W}$ acting on $A$ via the quotient $\widetilde{W} \to W$. 
We have $ \CC[\widetilde{W}] \# A\textup{-mod} 
\simeq  \lim_{B\widetilde{W}} Act \simeq \colim_{{B\widetilde{W}}^{op}} Act^L \simeq \colim_{I \subsetneq \widetilde{\Delta}} \colim_{B\widetilde{W}_I^{op}} Act^L 
\simeq \lim_{\{I \subsetneq \widetilde{\Delta}\}^{op}} \lim_{B \widetilde{W}_I} Act
\\ \simeq \lim_{\{I \subsetneq \widetilde{\Delta}\}^{op}}  (\CC[\widetilde{W}_I] \# \SS_{\frt[1]}  \textup{-mod}) 
\simeq \lim_{\{I \subsetneq \widetilde{\Delta}\}^{op}}  (\CC[{W}_I] \# \SS_{\frt[1]}  \textup{-mod}),$ where the third equivalence is by Corollary~\ref{homotopypushout}.

\end{proof}	

\section{Proof of the main theorem and its corollaries}



We identifiy $\widetilde{\Delta}$ with the set of walls of the fundamental alcove of $\widetilde{W}$ acting on $\AA=X_*(T) \otimes \RR$. For any $I \subset I' \subsetneq  \widetilde{\Delta}$,  denote $\widetilde{W}^I_{I'}:= N_{\widetilde{W}_{I'}}({\widetilde{W}_{I}})/{\widetilde{W}_{I}}$, and ${W}^I_{I'}:= N_{{W}_{I'}}({{W}_{I}})/{{W}_{I}}$, then $\pi$ induces $\widetilde{W}^I_{I'} \simeq {W}^I_{I'}.$ Denote by $\AA_I:= \cap_{\alpha \in I} \alpha^\perp$ the real affine subspace of $\AA_{\emptyset} := \AA$. Denote $V_I$ the vector space of translation of $\AA_I$. 

\begin{lem} 
	\label{weylgroup}
Assume $c_I >0$, then $\widetilde{W}^I$ acting on $\AA_I$ satisfies the assumption of Corollary~\ref{homotopypushout} with $S$ identifed with $\widetilde{\Delta}\backslash I$. For any $ I \subset I' \subsetneq \widetilde{\Delta}$, we have $(\widetilde{W}^I)_{I' \backslash I} = \widetilde{W}^I_{I'} $. 
\end{lem}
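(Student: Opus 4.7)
The plan is to verify each condition of Corollary~\ref{homotopypushout} for $\widetilde{W}^I$ on $\AA_I$, together with the normalizer identity, as separate inputs. The main structural fact is that when $c_I > 0$, Lusztig's classification of cuspidal data \cite{Lu7}, combined with the Brink--Howlett/Borcherds description of normalizers of parabolic subgroups of Coxeter groups \cite{BH,Bor}, identifies $\widetilde{W}^I \simeq W^I \ltimes \Lambda_I$ as an affine Coxeter group acting on $\AA_I$ via reflections through hyperplanes of the form $\AA_I \cap \alpha^\perp$.

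First I would identify $\widetilde{\Delta}\setminus I$ with the walls of a fundamental alcove of $\widetilde{W}^I$ on $\AA_I$. The fundamental alcove $C$ of $\widetilde{W}$ in $\AA$ has walls labelled by $\widetilde{\Delta}$; passing to $C \cap \AA_I$ yields a convex polytope whose bounding affine hyperplanes are precisely the hyperplanes $\AA_I \cap \alpha^\perp$ for $\alpha \in \widetilde{\Delta}\setminus I$, since the walls $\alpha \in I$ contain $\AA_I$ entirely. The Brink--Howlett description then asserts that the reflections through these hyperplanes lift to a set of Coxeter generators of $\widetilde{W}^I$, so $C \cap \AA_I$ is a fundamental alcove. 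Next I would verify the remaining hypotheses: infiniteness follows because $\Lambda_I$ is a lattice of full rank in the positive-dimensional vector space $V_I$; essentiality follows because $\Lambda_I$ has no global fixed point in $\AA_I$; and irreducibility can be checked against Lusztig's tables of cuspidal pairs for $G$ simple and simply-connected. In the degenerate case $|I| = r$ (so $\AA_I$ is a point), one verifies directly that $c_I > 0$ forces $\widetilde{W}^I = 1$, consistent with Remark~\ref{cuspidal}, so that the relevant colimit statement holds vacuously.

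For the normalizer identity $(\widetilde{W}^I)_{I' \setminus I} = \widetilde{W}^I_{I'}$, I would apply the same Brink--Howlett decomposition to the pair $\widetilde{W}_I \subset \widetilde{W}_{I'}$. This yields $N_{\widetilde{W}_{I'}}(\widetilde{W}_I) = \widetilde{W}_I \rtimes \widetilde{W}^I_{I'}$ with $\widetilde{W}^I_{I'}$ generated precisely by those simple reflections of $\widetilde{W}^I$ (i.e.\ those indexed by $\widetilde{\Delta}\setminus I$) that happen to lie inside $\widetilde{W}_{I'}$. Under the identification of the previous paragraph these are exactly the reflections indexed by $I' \setminus I$, giving equality of the two parabolic subgroups of $\widetilde{W}^I$.

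The main obstacle I anticipate is the irreducibility step: whereas infiniteness and essentiality follow cleanly from the lattice structure of $\Lambda_I$, irreducibility of the $\widetilde{W}^I$-action on $\AA_I$ does not appear to be automatic and seems to require either a case-by-case inspection of Lusztig's cuspidal tables or a uniform argument exploiting the simply-connected hypothesis on $G$ together with the strong constraint that cuspidal Levis place on the complementary root data.
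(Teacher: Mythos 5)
Your proposal takes a genuinely different route from the paper's. The paper does not invoke Brink--Howlett at all; it applies Lusztig's Theorem 9.2 (which, under the cuspidal hypothesis, already produces a reflection group structure on $N_{\widetilde{W}_{I'}}(\widetilde{W}_I)/\widetilde{W}_I$ for each finite parabolic $\widetilde{W}_{I'}$) and then \emph{glues}: it sets $\Gamma_I \subset \widetilde{W}$ to be the subgroup generated by all these local normalizers, observes that $\Gamma_I/\widetilde{W}_I$ is a reflection group on $\AA_I$ with fundamental alcove $\overline{A}\cap\AA_I$, and then shows $\Gamma_I = N_{\widetilde{W}}(\widetilde{W}_I)$ by noting that the composite $\overline{A}\cap\AA_I = \AA_I/\Gamma_I \to \AA_I/N_{\widetilde{W}}(\widetilde{W}_I) \to \AA/\widetilde{W} = \overline{A}$ is injective. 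This makes the identity $(\widetilde{W}^I)_{I'\setminus I} = \widetilde{W}^I_{I'}$ essentially true by construction, whereas your appeal to Brink--Howlett for the pair $\widetilde{W}_I \subset \widetilde{W}_{I'}$ leaves a compatibility check between the Coxeter generators produced for $\widetilde{W}^I_{I'}$ and those produced for $\widetilde{W}^I$.

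The real gap is the one you flag yourself: irreducibility. You propose a case-by-case inspection of Lusztig's tables, but this misses a simple geometric argument that becomes available once the alcove is pinned down. The set $\overline{A}\cap\AA_I$ is a face of the simplex $\overline{A}$, hence is itself a compact simplex of dimension $r - |I|$. Any reflection group on an affine space with a compact simplex as its alcove is automatically infinite (else the alcove would be an unbounded cone), essential (else the alcove would contain a translate of a fixed line, so be unbounded), and irreducible (a nontrivial product decomposition would force the alcove to be a product of two positive-dimensional polytopes, which is never a simplex). So once you know the alcove is $\overline{A}\cap\AA_I$, the remaining hypotheses of Corollary~\ref{homotopypushout} come for free, with no appeal to tables. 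Relatedly, the assertion that ``the Brink--Howlett description asserts the reflections lift to Coxeter generators of $\widetilde{W}^I$'' is too strong as stated: for a general parabolic the Howlett normalizer quotient has the shape (reflection subgroup) $\rtimes$ (diagram automorphisms) and need not be a Coxeter group; the collapse of the second factor is exactly what $c_I > 0$ buys via Lusztig, and your argument should make that dependence explicit rather than fold it into a citation of \cite{BH,Bor}.
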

\begin{proof} 
Denote $A \subset \AA$ the fundamental alcove. Let $\Gamma_I:=$ the subgroup of $\widetilde{W}$ generated by $N_{\widetilde{W}_I'} \widetilde{W_I}$ for all $I'$ such that $I \subset I' \subsetneq \widetilde{\Delta}$. By \cite[Theorem 9.2]{Lu7}, we see that $\Gamma_I/\widetilde{W}_I$ is a reflection group on $\AA_I$ with alcove $A_I$, where $A_I$ is defined by  $\overline{A_I}=\overline{A} \cap \AA_I$; and simple reflections in $\Gamma_I/\widetilde{W}_I$ can be identified as $\widetilde{\Delta}\backslash I$, such that $(\Gamma_I/\widetilde{W}_I)_{I' \backslash I}= \widetilde{W}^I_{I'}$. The composition of maps $\overline{A} \cap \AA_I = \AA_I /\Gamma_I \to \AA_I/ N_{\widetilde{W}} \widetilde{W}_I \to \AA/ \widetilde{W} = \overline{A}$ is injective. Hence $\Gamma_I/\widetilde{W}_I = N_{\widetilde{W}} \widetilde{W}_I/\widetilde{W}_I=\widetilde{W}^I.$ 

\end{proof}	

\begin{proof}[Proof of Theorem \ref{main}]
For $J \subsetneq \widetilde{\Delta}$, we have $J = \Delta(L_J,T,P^J_\emptyset)$, hence by Theorem~\ref{generalizedspringer}, we have $Sh(\cN_{L_J}/L_J) \simeq \bigoplus_{I \subset J} (\CC[W^I_J] \#   \SS_{\frz_I[1]}  \textup{ -mod} )^{\oplus c_I}.$ 
Under this identification, for $J \subset J'$, the parabolic restriction 
\begin{equation}
\label{res}
\Res^{J'}_{J}: \bigoplus_{I' \subset J'} (\CC[W^{I'}_{J'}] \#   \SS_{\frz_{I'}[1]}  \textup{ -mod} )^{\oplus c_{I'}} \to  \bigoplus_{I \subset J} (\CC[W^I_J] \#   \SS_{\frz_I[1]}  \textup{ -mod} )^{\oplus c_I}
\end{equation}
 is identified as restriction along $W^{I'}_J \subset W^{I'}_{J'}$ for those $I' \subset J$ and $0$ for those $I' \not\subset J$. Now we compute

    \begin{align*}
\Ch(G)
&  \simeq \lim_{J \subsetneq \widetilde{\Delta}} \bigoplus_{I \subset J} (\CC[W^I_J] \#   \SS_{\frz_I[1]}  \textup{ -mod} )^{\oplus c_I}   
\qquad \text{(by Theorem~\ref{glue})} \\
& 	\simeq \bigoplus_{I \subsetneq \widetilde{\Delta}} (\lim_{I \subset J \subsetneq \widetilde{\Delta}} \CC[W^I_J] \#   \SS_{\frz_I[1]}  \textup{ -mod} )^{\oplus c_I}  \qquad \text{(by (\ref{res}))} \\
& \simeq \bigoplus_{I \subsetneq \widetilde{\Delta},c_I > 0} (\lim_{I \subset J \subsetneq \widetilde{\Delta}} \CC[\widetilde{W}^I_J] \#   \SS_{\frz_I[1]}  \textup{ -mod} )^{\oplus c_I}   
 \\
&  \simeq \bigoplus_{I \subsetneq \widetilde{\Delta}, c_I > 0} (\CC[ \widetilde{W}^I] \#   \SS_{\frz_I[1]}  \textup{ -mod} )^{\oplus c_I}  \qquad (\text{by Lemma~\ref{weylgroup}, applying the proof of Theorem~\ref{derivedspringer} to } \widetilde{W}^I).  
\end{align*}
\end{proof}

\begin{proof}[Proof of Corollary~\ref{spectral}]
	We have $X_*(Z_I^0) \subset \Lambda_I$ of finite index, this induces $\frs^*_I:= Lie(\check{S}_I) \simeq \frz_I^*$. 
We have $\QCoh(\mathcal{L}\check{S}_I / W^I) \simeq \QCoh ((\check{S}_I \times \frs^*_I[-1])/W^I) \simeq \CC[W^I]\#( \mathcal{O}(\check{S}_I) \otimes\mathcal{O}(\frz^*_I[-1])) \textup{ -mod} \simeq \CC[W^I]\#(\CC[\Lambda_I] \otimes \SS_{\frz_I[1]})\textup{ -mod} \simeq \CC[\widetilde{W}]\#\SS_{\frz_I[1]}\textup{ -mod}. $ 
\end{proof}

\begin{proof}[Proof of Corollary~\ref{irreducible character sheaves}]
The irreducible object in $\Coh^{\heartsuit}_0( \widehat{G}^{cl})$ is given by $\delta_{I,F,s,\rho} :=i_{I,F,s,*} \rho$, where $i_{I,F,s}: BW^I_s \hookrightarrow \widehat{G}^{cl}$ the closed embedding determined by $(I,F,s)$, and $\rho$ is viewed as an object in $\Coh(BW^I_s)$. For the second statement, $\Hom^*_{\Coh_0(\widehat{G})}(\delta_{I,F,s,\rho},\delta_{I',F',s',\rho'})=0$ for $(I,F,s) \neq (I',F',s')$; and $\Hom^*_{\Coh_0(\widehat{G})}(\delta_{I,F,s,\rho},\delta_{I,F,s,\rho'})=\Hom^*_{\Coh(BW^I_s)} (\rho, i^!_{I,F,s}\delta_{I,F,s,\rho'} ) =\Hom^*_{W^I_s} (\rho,\SS_{\frz_I^*[-1] \oplus \frz_I^*[-2]} \otimes \rho' )$.
\end{proof}	

\section{Acknowledgements}
We would like to thank Pramod Achar, Roman Bezrukavnikov, Joana Cirici, Dragos Fratila, Sam Gunningham, Quoc Ho, Geoffroy Horel, Daniel Juteau, Nitu Kitchloo, George Lusztig, David Nadler, Bertrand Toën and Zhiwei Yun for  helpful discussions. 
The author is grateful for the support of Prof. Tamas Hausel and the Advanced Grant “Arithmetic and Physics of Higgs moduli spaces” No. 320593 of the European Research Council.

\appendix
\section{Formality of generalized Springer sheaves}

In this appendix, we shall use a general ``purity implies formality'' theorem of Cirici-Hovey \cite{CH17} to duduce the formality of genenralized Springer sheaves.
 
\begin{defn}
A dg algebra $A$ is \textit{formal} if there is an isomorphism of dg algebras $A \simeq H^*(A)$.
\end{defn}
We 	denote $\Vect $  the symmetric monoidal dg category of vector spaces.
\begin{defn}
 Let $\mathscr{C}$ be a monoidal dg category. A lax/oplax monoidal functor $F:\mathscr{C} \to \Vect$ is \textit{formal} if there is an isomorphism of lax/oplax monoidal functors $F \simeq H^* \circ F$.
\end{defn}	

A dg algebra $A$ is equivalent to a monoidal functor $F_A: \textup{Disk}_1 \to \Vect^\otimes$, where $\textup{Disk}_1$ is the 1-Disk category. And $A$ is formal if and only if $F_A$ is formal.

\begin{defn} A  bounded (\textit{resp.} bounded below/above) \textit{mix Hodge complex} (MHC for short) is given by a filtered bounded (\textit{resp.} bounded below/above) cochain complex
	$(K_\QQ, W_\QQ)$ over $\QQ$, a bifiltered cochain complex $(K, W, F)$ over $\CC$, together with an isomorphism in the dg category of filtered complexes of $\CC$-vector spaces: $\varphi:(K_\QQ \otimes \CC,W_\QQ \otimes \CC) \simeq (K, W)$, such that:
	\begin{enumerate}
		\item The cohomology $H^i(K_\QQ)$ is finite dimensional, for all $i \in \ZZ$.
		\item The differential of $Gr^p_W K$ is strictly compatible with $F$.
		\item The filtration on $H^n(Gr^p_W K)$ induced by $F$ makes $H^n(Gr^p_W K_\QQ)$ into a pure Hodge structure of weight $p + n$.
    \end{enumerate}
A mixed Hodge complex is \textit{pure} if $H^n(Gr^p_W K)=0$, for $p \neq 0$.
\end{defn}

The bounded (\textit{resp.} bounded below/above) mixed Hodge complexes forms a symmetric monoidal dg category. Denote by $\MHC^{b}$ (\textit{resp.} $\MHC^{+}/\MHC^{-}$). The pure mixed Hodge complexes forms a fully subcategory $\MHC_{pure}^{?} \subset \MHC^{?}$, $? =b,+$ or $-$.

\begin{thm} 
	\label{formality}
The forgetful functor $U:\MHC_{pure}^{?} \to  \Vect,$ via $((K_\QQ,W_\QQ),(K,W,F),\varphi) \mapsto K_\QQ$ is formal. 
\end{thm}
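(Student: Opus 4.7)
The plan is to apply the Cirici--Hovey ``purity implies formality'' framework \cite{CH17} to the specific symmetric monoidal dg category $\MHC_{pure}^{?}$; the main task is to check that the hypotheses of their framework hold in this setting.

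First I would recall the symmetric monoidal structure on $\MHC^{?}$: the tensor product $(K,W,F)\otimes (K',W',F')$ equips the tensor product of complexes with the convolution filtrations, and the forgetful functor $U$ is strictly symmetric monoidal. I would then verify that $\MHC_{pure}^{?}$ is closed under this tensor product, which follows from the Künneth formula combined with the purity hypothesis: if $H^n(Gr^p_W K)=0$ and $H^n(Gr^{p'}_{W'} K')=0$ for $p,p'\neq 0$, the Künneth spectral sequence shows $H^n(Gr^r_{W*W'}(K\otimes K'))=0$ for $r\neq 0$.

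Next I would construct the zig-zag of lax/oplax monoidal natural transformations relating $U$ to $H^{*}\circ U$. The key is to pass through an intermediate functor $Z$ built from the weight filtration (using the Deligne décalage $Dec\,W$): for a pure MHC, the weight spectral sequence degenerates in the strongest possible sense and the rows $E_1^{p,*}$ vanish for $p\neq 0$, so the natural maps
\[
U(K)\;\xleftarrow{\;\alpha_K\;}\; Z(K) \;\xrightarrow{\;\beta_K\;}\; H^{*}(U(K))
\]
are quasi-isomorphisms. Strict compatibility of $F$ with the differential on $Gr_W$ ensures that the cohomology of each $Gr_W$-piece picks up a Hodge structure of the correct weight, which is what lets the spectral sequence degenerate and makes $\beta_K$ a quasi-isomorphism.

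Finally I would check that the zig-zag is compatible with the monoidal structure in the lax/oplax sense, i.e.\ that the structure maps $Z(K)\otimes Z(K')\to Z(K\otimes K')$ (or its oplax analogue) are quasi-isomorphisms and intertwine $\alpha,\beta$ with the monoidal coherences. This is where the Cirici--Hovey theorem is essential: they establish exactly this compatibility in an abstract filtered symmetric monoidal model category, provided purity holds. The main obstacle of the plan is not the purity argument itself, which is classical, but matching conventions: verifying that our definition of $\MHC^{?}$ (including the rational structure and $F$-filtration) and of purity (concentrated on $Gr^0_W$) fit the abstract framework of \cite{CH17}, possibly after an indexing shift or décalage. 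Once this identification is set up, their theorem produces the desired lax/oplax monoidal isomorphism $U\simeq H^{*}\circ U$.
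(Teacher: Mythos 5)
Your plan, as written, only addresses the bounded case $? = b$. The paper's own proof treats that case in one sentence — it simply cites \cite{CH17} for the bounded mixed Hodge complexes — and the actual content of the paper's argument is the extension to the unbounded cases $? = +$ and $? = -$, which you do not address at all. Re-deriving the Cirici--Hovey zig-zag via décalage, checking the monoidal compatibility, and matching conventions is a legitimate expansion of what the citation compresses, but it only lands you where the paper starts.

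The gap is this: for $\MHC^+$ (resp.\ $\MHC^-$), the complexes are only bounded below (resp.\ above), and the Cirici--Hovey theorem as proved applies to \emph{bounded} complexes. You need an extra argument to pass from bounded to bounded-below. The paper does this via the truncation functors $\tau^{\leq n}: \MHC^{+} \to \MHC^{b}$, which are oplax monoidal, setting $U^{\leq n} := \tau^{\leq n}\circ U$ and observing that $H^*\circ U^{\leq n} \simeq U^{\leq n}$ by the bounded case, then recovering $U$ as $\varprojlim U^{\leq n}$. The crucial point — which is not automatic and needs verification — is that the inverse limit $\varprojlim (H^*\circ U^{\leq n})$ taken in $\Vect$ agrees with the inverse limit of graded vector spaces and hence with $H^*\circ U$; the paper justifies this by the surjectivity of the transition maps $H^*\circ U^{\leq n+1}\to H^*\circ U^{\leq n}$ (a Mittag--Leffler-type observation). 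Without this truncation-and-limit step your proof does not establish the theorem for the cases $? = +, -$, and it is precisely $\MHC^+$ that is used later in the paper (Proposition~\ref{formalspringer} works with $R\Gamma$ of a sheaf on a quotient stack, which is only bounded below).

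A secondary issue worth flagging: your "main obstacle" paragraph identifies convention-matching between your $\MHC$ and the abstract filtered setting of \cite{CH17} as the real difficulty, but the paper does not regard this as the sticking point — it accepts the bounded statement as proved in \cite{CH17} and spends its effort on the boundedness extension. Your energy is concentrated on the wrong part of the argument.
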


\begin{proof}
	For $?=b$, this is proved in \cite{CH17}. The truncation functor $\tau^{\leq n}: \MHC^{+} \to \MHC^b$ is oplax monoidal. Denote $U^{\leq n}:= \tau^{\leq n} \circ U$. We have $H^* \circ U^{\leq n} \simeq U^{\leq n}$, therefore $H^* \circ U \simeq H^* \circ \varprojlim U^{\leq n} \simeq \varprojlim  H^* \circ U^{\leq n} \simeq \varprojlim U^{\leq n} \simeq U $ as oplax monoidal functors, where the second equivalence follows from the fact that the maps $H^* \circ U^{\leq n+1} \to H^* \circ U^{\leq n}$ are surjective, and therefore the inverse limit $\varprojlim  H^* \circ U^{\leq n}$ in $\Vect$ agree with the inverse limit as graded vector spaces. The argument for $\MHC^{-}$ is similar.
\end{proof}

A sheaf is of \textit{geometric origin} is it can be produced from constant sheaves via  $*/!$-pushforward,$*/!$-pullback along morphisms of algebraic varieties, external tensor product and taking direct summand. \cite{CH17} construct a MHC structure on the (co)chains $C^*(Z)$ on an algebraic variety $Z$.  This gives a MHC structure on  the sections $R\Gamma(X,F)$, for any $F$ on $X$ of geometric origin. We thank Sam Gunningham for sketching a proof of the following formality result for $L=T$:

\begin{prop}
	\label{formalspringer}
 Let $F$ be a cuspidal sheaf on $\calN_L/L$. Then the endomorphism dg algebra $\End(\Ind_P(F))$ is formal. 
\end{prop}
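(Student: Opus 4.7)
The plan is to apply Theorem~\ref{formality} to $\End(\Ind_P(F))$, viewed as an $E_1$-algebra in the symmetric monoidal dg category of (bounded below) mixed Hodge complexes. First I would realize the endomorphism complex geometrically: letting $\tilde{\calN}_P := G \times^P \calN_P$ and $\mu: \tilde{\calN}_P \to \calN_G$ the proper projection, we have $\Ind_P(F) \simeq \mu_! \tilde F$ for $\tilde F$ the descent of $q^*F$ to $\tilde{\calN}_P/G$. By adjunction and proper base change,
\begin{equation*}
\End(\Ind_P(F)) \;\simeq\; R\Gamma\bigl(Z_P,\,\calK_F\bigr),
\end{equation*}
where $Z_P := \tilde{\calN}_P \times_{\calN_G} \tilde{\calN}_P$ is a Steinberg-type variety and $\calK_F := p_2^! \tilde F \otimes p_1^* \mathbb{D}\tilde F$ (or the appropriate kernel) is a $G$-equivariant sheaf of geometric origin on $Z_P/G$. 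The convolution product on the right-hand side equips $R\Gamma(Z_P,\calK_F)$ with the structure of an algebra object in $\MHC^{+}$ via the MHC formalism of Cirici-Hovey applied to the stack $Z_P/G$ and the geometric-origin sheaf $\calK_F$.

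Granted this, it remains to establish purity of the mixed Hodge complex $R\Gamma(Z_P,\calK_F)$. This would then complete the proof: by Theorem~\ref{formality} the forgetful functor $U:\MHC^+_{pure}\to \Vect$ is lax monoidal formal, so the algebra structure already constructed inside $\MHC^+_{pure}$ descends to an isomorphism of dg algebras $\End(\Ind_P(F)) \simeq H^*\End(\Ind_P(F))$.

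For the purity step, my plan is to reduce to the weight-theoretic results underlying Lusztig's generalized Springer theory. By Deligne's theorem on proper pushforward, $\mu_! \tilde F$ is pure of weight $w(\tilde F)$ because $\mu$ is proper and $\tilde F$ is pure (cuspidal character sheaves on $\calN_L/L$ are pure, being intersection cohomology sheaves of local systems on nilpotent orbits, up to shift). Hence $\Ind_P(F)$ is itself a pure perverse sheaf, and the semisimplicity of pure perverse sheaves (decomposition theorem) implies that $\End(\Ind_P(F))$ decomposes into Hom-spaces between pure simple summands of equal weight, all of which are pure. This gives pointwise purity of the cohomology $H^*(Z_P,\calK_F)$, which combined with the MHC structure above yields the purity of the mixed Hodge complex. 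Equivalently, one can cite \cite[Proposition 4.6]{RR2}, where this purity is established directly.

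The hard part is the purity assertion: the MHC-level refinement of the decomposition theorem is not formal, and one must check that the algebra structure on $\End(\Ind_P(F))$ — given by composition/convolution — is genuinely compatible with the MHC structure produced by the Cirici-Hovey machine, not merely compatible at the level of underlying complexes. I would handle this by working throughout in the symmetric monoidal enhancement of the constructible derived category with MHC coefficients, so that the six-functor operations and the unit/counit maps defining composition of morphisms are all MHC-morphisms. Once this bookkeeping is in place, together with the purity input above, formality follows directly from Theorem~\ref{formality}.
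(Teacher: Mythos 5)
Your plan is essentially the one the paper follows: realize $\End(\Ind_P(F))$ as global sections over the Steinberg variety $Z = \calN_P/P \times_{\calN_G/G} \calN_P/P$ of a convolution kernel (the paper does this via the Chriss--Ginzburg formalism, \cite[(8.6.4), Prop.~8.6.35]{CG09}), observe that because $F$ is of geometric origin this complex inherits an MHC structure compatible with the convolution algebra structure via the Cirici--Hovey construction, establish that the MHC is pure, and conclude by Theorem~\ref{formality}.

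The one place your sketch goes wrong is the purity step. Deligne's theorem on proper pushforward does give that $\Ind_P(F)$ is a pure perverse sheaf, hence by the decomposition theorem a direct sum of shifted simple pure perverse sheaves. But it does \emph{not} follow formally that the $\Ext$-groups between these summands are pure: for pure perverse sheaves $A,B$ of the same weight, the weight formalism only yields the lower bound that $\Ext^i(A,B)$ has weights $\geq i$, and the matching upper bound is an extra geometric input (typically coming from affine pavings, contracting $\Gm$-actions, or an equivariant-formality argument), not a consequence of semisimplicity. In this setting the purity is a theorem of Rider--Russell, \cite[Proposition 3.5]{RR2} (not Proposition 4.6, which gives the description of the cohomology ring rather than the purity). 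Since you fall back on citing \cite{RR2} for the purity, your proof closes, but the heuristic "decomposition theorem $\Rightarrow$ purity of $\End$" preceding that citation would not stand on its own and should be removed or replaced by the actual reference.
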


\begin{proof}
Denote $i:  Z:= \calN_P/P \times_{\N_G/G} \calN_P/P \to \N_G/G $ the map induced by $p$. By \cite[(8.6.4) and Propsition 8.6.35]{CG09}, we have an isomorphism of dg algebras $\End(\Ind_P(F)) \simeq R\Gamma(Z,i^!((q^*F)^\vee \boxtimes q^*F)) \simeq R\Gamma(Z,i^!(q^*(F^\vee) \boxtimes q^*F))$.  Since $F$ and $F^\vee$ is of geometric origin \cite{Lu1}, $R\Gamma(Z,i^!(q^*(F^\vee) \boxtimes q^*F))$ carries a MHC structure, which is compatible with its algebra structure (since it is also contructed geometically).  The Frobenius action on $H^*(\End(\Ind_P(F)))$ is pure \cite[Proposition 3.5]{RR2}, and so is the action on $H^*(Z,i^!((q^*F)^\vee \boxtimes q^*F))$. We conclude that the $R\Gamma(Z,i^!(q^*(F^\vee) \boxtimes q^*F))$ lifts to an algebra object in $\MHC_{pure}^{+}$, and therefore is formal by Theorem~\ref{formality}.

\end{proof}

\bibliographystyle{alpha}
\bibliography{paper}

\end{document}